\documentclass[final,hidelinks,onefignum,onetabnum]{siamart220329}

\usepackage{amsfonts}
\usepackage{graphicx}
\usepackage{epstopdf}
\ifpdf
\DeclareGraphicsExtensions{.eps,.pdf,.png,.jpg}
\else
\DeclareGraphicsExtensions{.eps}
\fi

\usepackage{booktabs}
\usepackage{hyperref}
\usepackage{stmaryrd}
\usepackage{bm}
\usepackage{multirow}
\usepackage{enumitem}
\usepackage{amsopn}
\usepackage{threeparttable}
\usepackage{amsmath}
\usepackage{algorithm,algorithmicx,algpseudocode}

\newcommand{\bfA}{\mathbf{A}}
\newcommand{\bfB}{\mathbf{B}}
\newcommand{\bfC}{\mathbf{C}}

\newsiamthm{test}{Test}
\newsiamthm{assumption}{Assumption}

\def\jump#1{\llbracket #1 \rrbracket }

\newsiamremark{remark}{Remark}
\newsiamremark{hypothesis}{Hypothesis}
\crefname{hypothesis}{Hypothesis}{Hypotheses}
\newsiamthm{claim}{Claim}
\newsiamremark{exmp}{Example}

\headers{Compact FAM with single-step OE}{C.~Fan and K.~Wu}

\title{Compact Finite-Average-Moment Schemes with Single-Step Oscillation Elimination for Hyperbolic Conservation Laws\thanks{This work was partially supported by Science Challenge Project (No.~TZ2025007) and the Shenzhen Science and Technology Program (Grant Nos.~JCYJ20250604144300001 and RCJC20221008092757098).}}

\author{Chuan Fan\thanks{Key Laboratory of Mathematical Modelling and High Performance Computing of Air Vehicles (NUAA), MIIT, Nanjing University of Aeronautics and Astronautics, Nanjing, Jiangsu 210016, China (\email{cfan\_math@nuaa.edu.cn}).}
\and Kailiang Wu\thanks{Corresponding author. Department of Mathematics and Shenzhen International Center for Mathematics, Southern University of Science and Technology, Shenzhen, Guangdong 518055, China (\email{wukl@sustech.edu.cn}).}}

\ifpdf
\hypersetup{
	pdftitle={A Compact Finite-Average-Moment Method with Single-Step Oscillation Elimination for Hyperbolic Conservation Laws},
	pdfauthor={Chuan Fan and Kailiang Wu}
}
\fi

\newsiamthm{property}{Property}
\usepackage{amssymb}
\allowdisplaybreaks

\usepackage{caption}

\begin{document}

\maketitle

\begin{abstract}
High-order shock-capturing schemes often face severe computational bottlenecks due to wide stencils, stagewise nonlinear weights, and expensive local characteristic decompositions. To address this, we propose a compact finite-average-moment (FAM) framework for hyperbolic conservation laws that completely decouples the formal spatial order from the number of evolved local degrees of freedom. By evolving only a $\mathbb{P}^1$ moment state (i.e., cell averages and scaled first-order moments), we achieve up to sixth-order accuracy ($k=3, 4, 5, 6$) via compact, polynomially exact \emph{linear} moment reconstructions. A key algorithmic innovation is consolidating the nonlinear stabilization into a single, derivative-free oscillation-elimination (OE) procedure applied only after the final Runge--Kutta stage. This OE procedure exactly preserves cell averages while applying an explicit exponential correction to the first-order moments, avoiding repeated stagewise nonlinear weights or limiting. Fourier analysis of the linear backbone demonstrates $k$th-order accuracy, strong spurious mode damping, and $(k+1)$th-order cell-average superconvergence. Extensive numerical experiments on scalar laws and the Euler equations verify the expected smooth accuracy and robust, nonoscillatory shock resolution. Notably, by performing componentwise reconstruction directly in conservative variables and streamlining stabilization, the FAM schemes deliver significantly lower complete-run wall-clock times compared to the multi-resolution weighted essentially non-oscillatory (MR-WENO), unified-stencil Hermite WENO (HWENO-U), and OE-HWENO methods.
\end{abstract}

\begin{keywords}
hyperbolic conservation laws,
finite-average-moment methods,
compact linear reconstruction,
oscillation elimination, 
Fourier analysis
\end{keywords}

\begin{MSCcodes}
65M60, 65M12, 35L65
\end{MSCcodes}

\section{Introduction}

Hyperbolic conservation laws model transport and wave propagation in a wide range of applications, including compressible flows, astrophysics, traffic dynamics, and geophysical transport. Even from smooth data, solutions may develop shocks, contacts, and rarefaction fans in finite time. Their numerical approximation therefore requires methods that are simultaneously high-order accurate in smooth regions, nonoscillatory near discontinuities, and computationally efficient on large meshes.

We consider the $d$-dimensional system
\begin{equation}\label{eq:HCLs}
\partial_t{\bm u}+\nabla\cdot\bm f({\bm u})=0,
\qquad
(\bm x,t)\in\mathbb R^d\times\mathbb R^+,
\end{equation}
where ${\bm u}\in\mathbb R^N$ and $\bm f({\bm u})\in\mathbb R^{N\times d}$. A large body of high-order methods has been developed for \eqref{eq:HCLs}, including finite difference methods \cite{Harten,JS,Pirozzoli,WT}, finite volume methods \cite{HS,LOC,Shu1998}, and discontinuous Galerkin (DG) methods \cite{CS2,DBTM,LLS2,LLS1,PSW}. Essentially non-oscillatory (ENO) and weighted ENO (WENO) ideas \cite{Harten,harten1987,shu1988,JS,Shu1998} remain especially influential because they retain high-order accuracy away from discontinuities while reducing oscillations near nonsmooth features. This accuracy can, however, come with wider stencils, more elaborate boundary closures, and more expensive reconstructions.

Hermite WENO (HWENO) schemes \cite{QS1} improve compactness by evolving both solution averages and derivative information, but they also require nonlinear reconstructions based on smoothness indicators and nonlinear weights. For systems, many WENO/HWENO implementations use local characteristic decomposition to control oscillations \cite{QS2002}, which can be costly in multiple dimensions. Hybrid WENO/HWENO variants \cite{LQ_hybrid,Pirozzoli,ZCQ} reduce this expense by switching to linear reconstructions in smooth regions, although their nonsmooth-region treatment remains nonlinear and, when characteristic variables are used, still incurs local characteristic-variable work.

A complementary line of work uses oscillation-elimination (OE) corrections. Inspired by the oscillation-free DG technique \cite{LLS1,LLS2}, Peng, Sun, and Wu \cite{PSW} introduced the oscillation-eliminating DG (OEDG) framework, in which a DG evolution is followed by a non-intrusive damping step that is scale- and evolution-invariant and admits a closed-form damping update. OE ideas have since been extended to OE-HWENO schemes \cite{FanWu}, bound-preserving unstructured DG methods \cite{DCW}, and applications including relativistic hydrodynamics \cite{CPW}, ideal magnetohydrodynamics \cite{LW_MHD}, two-phase flows \cite{YAW}, and traffic models \cite{CCWX}. DG-based correction frameworks such as COS(DG) \cite{CHLW} further illustrate the effectiveness of OE in a simple convex combination form. These DG-based corrections do not intrinsically require local characteristic decomposition; their main distinction from the present FAM design is that they evolve a higher-order DG polynomial state with more degrees of freedom (DOFs) per cell. In the OE-HWENO Euler implementation compared below, by contrast, the HWENO reconstruction and the local characteristic-variable treatment are retained during the RK stages.

We develop a class of \emph{finite-average-moment} (FAM) schemes aimed at a different operating point. The evolved state is fixed at the $\mathbb P^1$ moment level, compact \emph{linear} reconstruction provides orders $k=3,4,5,6$, and the OE component is applied only once, as a derivative-free damping step after the final SSP-RK3 stage. Thus, FAM evolves only cell averages and scaled first-order moments---two DOFs per conserved component per cell in one dimension and three in two dimensions. By contrast, a total-degree DG state of degree $p$ carries $p+1$ and $(p+1)(p+2)/2$ DOFs per component, respectively, while a tensor-product $\mathbb Q^p$ state carries $(p+1)^2$ DOFs in two dimensions. The target order is therefore decoupled from the dimension of the evolved local state, reducing RK-stage reconstruction work and, when characteristic variables are used, the associated state-dependent transformations.

Table~\ref{tab:intro-positioning} summarizes this algorithmic positioning. FAM combines a fixed $\mathbb P^1$ evolved state, compact linear RK-stage reconstruction, and a single end-of-step OE correction. In the Euler tests reported here, reconstruction is performed componentwise in conservative variables, without local characteristic decomposition. This distinguishes FAM from the compared HWENO-type baselines, which use nonlinear HWENO reconstruction, and from DG-based OEDG/COS(DG) frameworks, whose OE/COS postprocessing is not intrinsically tied to local characteristic decomposition but whose evolved state is a richer DG polynomial state. Related efforts to reduce local characteristic-variable work include \cite{XS,LZ,WS}.

\begin{table}[t]
\centering
\caption{Algorithmic positioning of the proposed FAM framework relative to several closely related compact high-order approaches discussed in the paper.}
\label{tab:intro-positioning}
\footnotesize
\setlength{\tabcolsep}{2pt}
\renewcommand{\arraystretch}{1.66}
\begin{tabular}{@{}p{2.6cm}p{2.5cm}p{2.75cm}p{2.0cm}p{2.6cm}@{}}
\toprule[1.2pt]
Method & Evolved local state & RK-stage treatment & Characteristic decomposition & OE frequency \\
\midrule
\shortstack[l]{FAM (this work)} & fixed $\mathbb P^1$ moments & compact linear & not required & \shortstack[l]{once per time step} \\
\shortstack[l]{HWENO-U\cite{FQZ}} & fixed $\mathbb P^1$ moments & nonlinear HWENO & required & none \\
\shortstack[l]{OE-HWENO\cite{FanWu}} & fixed $\mathbb P^1$ moments & nonlinear HWENO & required & \shortstack[l]{Stagewise} \\[2mm]
\shortstack[l]{OEDG \cite{PSW}\\ COS(DG) \cite{CHLW}} & \shortstack[l]{DG polynomial\\ coefficients} & \shortstack[l]{DG polynomial\\ state evolution} & not required & \shortstack[l]{Stage-wise or\\ end-of-step} \\
\bottomrule[1.2pt]
\end{tabular}
\end{table}

The main contributions are as follows.
\begin{itemize}[leftmargin=20pt]
	\item[(1)] \textbf{A compact fixed-moment FAM framework.}
	We develop FAM schemes of orders $k=3,4,5,6$ while evolving only cell averages and scaled first-order moments. The number of evolved DOFs per cell is fixed, and the target order is supplied entirely by compact linear moment reconstruction.
	
	\item[(2)] \textbf{A single-step derivative-free OE correction.}
	The OE correction is applied only once, after the final RK stage. Its damping coefficients are computed from reconstructed jumps, and the resulting explicit exponential update preserves cell averages exactly, damps scaled first-order moments nonexpansively, and remains high-order small in smooth regions.
	
	\item[(3)] \textbf{Polynomial exactness and linear-backbone analysis.}
	We prove unique solvability and polynomial exactness of the compact reconstruction systems on the chosen stencils. For the linear-advection backbone without OE, Fourier analysis identifies a $k$th-order physical mode, a strongly damped spurious mode, $(k+1)$th-order cell-average superconvergence, and sampled admissible CFL estimates.
	
	\item[(4)] \textbf{Benchmark validation and runtime attribution.}
	Scalar and Euler tests document the designed smooth accuracy, nonoscillatory shock resolution, and the practical role of the end-of-step OE correction. Matched-order comparisons with MR-WENO3/MR-WENO5 and same-$\mathbb P^1$-state comparisons with HWENO-U/OE-HWENO are used to assess the effect of replacing stagewise nonlinear HWENO reconstruction (and, for OE-HWENO, stagewise OE) with compact linear RK-stage reconstruction and a single end-of-step OE correction. Additional timing studies attribute the measured costs to reconstruction, OE, and other update components, and quantify the cost of switching FAM5 from conservative to local characteristic variables.
\end{itemize}
 
The rest of the paper is organized as follows. Section~\ref{sec:FAM} presents the FAM methodology and reconstruction-consistency results. Section~\ref{sec:FAM_SaE} gives the Fourier-based accuracy and CFL analysis for the linear-advection backbone without OE\@. Section~\ref{sec:NumTest} reports the numerical results, and Section~\ref{sec:conclusion} concludes the paper.

\section{\texorpdfstring{FAM formulation, OE properties, and reconstruction consistency}{FAM formulation, OE properties, and reconstruction consistency}}\label{sec:FAM}

This section gives the FAM formulation, the single end-of-step OE correction, the componentwise system implementation, and the reconstruction exactness results supporting the fixed-moment design for orders $k=3,4,5,6$.

Throughout, the evolved unknowns are the cell averages and scaled first-order moments, i.e., the moment variables associated with a $\mathbb P^1$ DG representation. Higher-order accuracy is recovered by compact linear moment reconstructions with the required target-degree exactness; in one dimension the returned reconstructions are degree-$(k-1)$ polynomials, whereas the Cartesian two-dimensional construction uses selected incomplete higher-degree ansatzes to close the square moment systems while retaining exactness through total degree $k-1$. The two-dimensional formulation is restricted to uniform Cartesian grids. The reconstructed polynomials are used for numerical fluxes and volume terms.

Across FAM3--FAM6, the evolved moments, the SSP-RK3 time integrator, the numerical flux, and the OE correction structure all remain unchanged; only the compact linear reconstruction operator and the coefficient $\gamma_k$ vary with the target order.

\subsection{One-dimensional backbone and single-step OE}\label{sec:FAM_1d}

Consider the one-dimensional scalar conservation law
\begin{equation}\label{eq1:1dHCLS}
\partial_t u+\partial_x f(u)=0,
\qquad
u_0(x)=u(x,0),
\qquad
(x,t)\in\Omega\times[0,T].
\end{equation}
The computational domain $\Omega=[a,b]$ is partitioned into a uniform mesh $\Omega=\bigcup_{i=1}^{N_x}I_i$, where each cell $I_i=[x_{i-\frac12},x_{i+\frac12}]$ has width $\Delta x=x_{i+\frac12}-x_{i-\frac12}$ and center $x_i=\tfrac12(x_{i-\frac12}+x_{i+\frac12})$. Multiplying \eqref{eq1:1dHCLS} by a test function $\phi(x)$ and integrating over $I_i$ yields the weak formulation
\begin{equation}\label{eq:weakform_1d}
\int_{I_i}\partial_t u\,\phi(x)\,\mathrm dx
-\int_{I_i}f(u)\frac{\mathrm d\phi(x)}{\mathrm dx}\,\mathrm dx
+f\big(u(x_{i+\frac12},t)\big)\phi(x_{i+\frac12})
-f\big(u(x_{i-\frac12},t)\big)\phi(x_{i-\frac12})=0.
\end{equation}

To attain $k$th-order spatial accuracy, we evolve only the cell average and the scaled first-order moment, and recover order-$k$ accuracy via a compact reconstruction of degree-$(k-1)$ polynomials on each cell. Specifically, let $\{u_i^{(l)}(t)\}_{l=0}^1$ denote approximations of the cell average ($l=0$) and scaled first-order moment ($l=1$) of $u(x,t)$ over $I_i$, defined by
\[
u_i^{(l)}(t)\approx \frac{1}{\Delta x}\int_{I_i}u(x,t)\,\phi_i^{(l)}(x)\,\mathrm dx,
\qquad
\phi_i^{(0)}(x)=1,
\qquad
\phi_i^{(1)}(x)=\frac{x-x_i}{\Delta x}.
\]
Within the $\mathbb P^1$ DG framework, the test function $\phi(x)$ in \eqref{eq:weakform_1d} is taken from $\{\Delta x^{-1},(x-x_i)/\Delta x^2\}$, and $\{u_i^{(l)}(t)\}_{l=0}^1$ are evolved by the semi-discrete scheme
\begin{equation}\label{eq:1dSemi}
\frac{\mathrm d}{\mathrm dt}u_i^{(l)}(t)=\frac{1}{\Delta x}
\left(
\int_{I_i}f(u_h(x,t))\frac{\mathrm d\phi_i^{(l)}}{\mathrm dx}\,\mathrm dx
-\hat f_{i+\frac12}\phi_i^{(l)}(x_{i+\frac12}^-)
+\hat f_{i-\frac12}\phi_i^{(l)}(x_{i-\frac12}^+)
\right),
l=0,1,
\end{equation}
where the integrals are evaluated by suitable quadrature rules, $\hat f_{i+\frac12}=\hat f(u^-_{i+\frac12},u^+_{i+\frac12})$ denotes a monotone and consistent numerical flux, and $u_h(x,t)$ is the reconstructed polynomial used both for the volume term and for the interface traces.

Equation \eqref{eq:1dSemi} has a $\mathbb P^1$ DG-style weak form, but the evolved unknowns are scaled moments rather than orthonormal modal coefficients. They determine the local $\mathbb P^1$ moment representation $u_i^{(0)}+12u_i^{(1)}\phi_i^{(1)}(x)$ on $I_i$, since $\int_{I_i}(\phi_i^{(1)}(x))^2\,\mathrm dx=\Delta x/12$. In what follows, $u_h$ denotes the compact high-order reconstruction used for fluxes, volume terms, and OE jump indicators; the OE correction is applied only after the final RK stage.

Accordingly, the present OE step is invoked only once per time step and is driven by derivative-free jumps of the final reconstructed solution. This differs from OE-HWENO \cite{FanWu}, where OE is applied at RK stages together with HWENO reconstruction in the compared implementation, and from DG-based OEDG/COS(DG) postprocessing in which the damped object is the DG polynomial state rather than the fixed $\mathbb P^1$ moment state used here.

At time $t^n$, let $\bm U_i^n$ denote the moment vector after the final RK stage and before the OE correction, and let $\bm U_i^{n,\sigma}$ denote the OE-corrected moments. Define $\bm U_i(t)=\big(u_i^{(0)}(t),u_i^{(1)}(t)\big)^\top$ and $\bm{\mathcal R}_i(u_h)=\big(\mathcal R_i^{(0)}(u_h),\mathcal R_i^{(1)}(u_h)\big)^\top$. For an explicit $m$-stage Runge--Kutta method, one FAM time step from $t^{n-1}$ to $t^n=t^{n-1}+\Delta t$ reads
\begin{equation}\label{eq3:1dRK}
\bm U_i^{n,r+1}=\sum_{s=0}^{r}\Big(c_{rs}\bm U_i^{n,s}+\Delta t\,d_{rs}\bm{\mathcal R}_i(u_h^{n,s})\Big),
\qquad r=0,\dots,m-1,
\end{equation}
with $\bm U_i^{n,0}=\bm U_i^{n-1,\sigma}$ and $u_h^{n,s}$ denoting the compact degree-$(k-1)$ polynomial reconstruction, viewed as an approximation in a linear polynomial space, associated with the stage moments $\{\bm U_i^{n,s}\}$. The coefficients $c_{rs}$ and $d_{rs}$ are those of the chosen explicit RK scheme written in Shu--Osher form; for the strong-stability-preserving RK methods used in this paper they satisfy $c_{rs}\ge0$, $d_{rs}\ge0$, and $\sum_{s=0}^{r}c_{rs}=1$ for each $r$. After the final stage we set $\bm U_i^n:=\bm U_i^{n,m}$ and define the final solution by the end-of-step OE correction
\begin{equation}\label{eq4:1dOE}
\bm U_i^{n,\sigma}=\mathcal F_{\rm OE}\{\bm U_\kappa^n\}_{\kappa\in\mathcal{S}_i},~
\mathcal{S}_i= \bigcup^1_{m=-1} \Lambda_{i+m}, ~\Lambda_{i}=\{i-1,i,i+1\}.
\end{equation}

The OE operator $\mathcal F_{\rm OE}$ in \eqref{eq4:1dOE} is defined by evaluating, at pseudo-time $\tau=\Delta t$, the solution of the damping equations
\begin{equation}\label{eq7:OE1d_ODE}
\left\{
\begin{aligned}
&\partial_\tau u_\sigma(x,\tau)=-\frac{\beta_i\sigma_i(u_h^*)}{\Delta x}\big(u_\sigma(x,\tau)-P^0u_\sigma(x,\tau)\big), &&(x,\tau)\in I_i\times(0,\Delta t],\\
&u_\sigma(x,0)=u_h^*(x),
\end{aligned}
\right.
\end{equation}
where $\tau\in[0,\Delta t]$ is a pseudo-time variable, $\beta_i=\max_{m\in\Lambda_i}|f'(u_m^{n,(0)})|$ denotes a local wave-speed estimate over $\Lambda_i$, $P^0$ is the standard $L^2$ projection onto piecewise constants, and $u_h^*$ denotes the compact reconstructed solution obtained from the pre-OE moments $\{\bm U_i^n\}$ after the final RK stage. Specifically, $P^0u|_{I_i}=u_i^{(0)}$ represents the cell average of $u$ on $I_i$. The damping coefficient $\sigma_i(u_h^*)$ must be sufficiently small in smooth regions so as to preserve high-order accuracy and large enough near nonsmooth features to suppress spurious oscillations. Although $u_h^{*}$ is the compact high-order reconstruction used to compute the jump indicators, the OE output is defined only through its induced $P^1$ moment state; higher reconstructed modes are not stored after the OE step. Table~\ref{tab:sigma1d} presents the two specific forms used in this paper. 

\begin{table}[t]
\centering
\caption{Two definitions of the damping coefficient $\sigma_i(u_h^*)$ used in \eqref{eq7:OE1d_ODE}.}
\label{tab:sigma1d}
\small
\resizebox{\textwidth}{!}{
\begin{tabular}{c|l|l}
\toprule[1.2pt]
Type & $\sigma_i(u_h^*)$ & Reference quantity \\
\midrule
Type I &
$\displaystyle
\sigma_i(u_h^*)=
\left\{
\begin{aligned}
&0, &&\text{if }\max_{1\le p\le N_x}|u_p^{n,(0)}-\bar u_\Omega|<\epsilon_\Omega,\\
&\gamma_k\,
\frac{|\jump{u_h^*}_{i-\frac12}|+|\jump{u_h^*}_{i+\frac12}|}{\max_{1\le p\le N_x}|u_p^{n,(0)}-\bar u_\Omega|}, &&\text{otherwise},
\end{aligned}
\right.$
&
$\displaystyle \bar u_\Omega=\frac{1}{N_x}\sum_{p=1}^{N_x}u_p^{n,(0)}$ \\
\midrule
Type II &
$\displaystyle
\sigma_i(u_h^*)=
\left\{
\begin{aligned}
&0, &&\text{if }\max_{p\in\Lambda_i}|u_p^{n,(0)}-S_{\Lambda_i}|<\epsilon_i,\\
&\gamma_k\,
\frac{|\jump{u_h^*}_{i-\frac12}|+|\jump{u_h^*}_{i+\frac12}|}{\max_{p\in\Lambda_i}|u_p^{n,(0)}-S_{\Lambda_i}|}, &&\text{otherwise},
\end{aligned}
\right.$
&
$\displaystyle S_{\Lambda_i}=\sum_{p\in\Lambda_i}u_p^{n,(0)}$ \\
\bottomrule[1.2pt]
\end{tabular}}
\end{table}

In Table~\ref{tab:sigma1d}, the zero-denominator tolerance is chosen with the corresponding scale,
\[
\epsilon_\Omega=10^{-14}\max\{\max_p |u_p^{n,(0)}|,1\},
\qquad
\epsilon_i=10^{-14}\max\{\max_{p\in\Lambda_i}|u_p^{n,(0)}|,1\},
\]
for Type~I and Type~II, respectively. Also, $\jump{u_h^*}_{i+\frac12}=u_h^*(x_{i+\frac12}^+)-u_h^*(x_{i+\frac12}^-)$ denotes the interface jump. For Type~II, $S_{\Lambda_i}:=\sum_{p\in\Lambda_i}u_p^{n,(0)}$ is intentionally unnormalized, so the denominator scales with the local background level and weakens damping of small-amplitude structures on a large state. Type~II is therefore the default coefficient in the multiscale tests, whereas Type~I is appropriate when the global variation is the intended scale. Unless stated otherwise, we use Type~II with $\gamma_3=\gamma_4=25$ and $\gamma_5=\gamma_6=12.5$ in one dimension. Sensitivity plots and jump formulas are provided in the supplementary material.

\begin{proposition}
\label{thm:1dOE}
\textbf{Exact update of the OE step.} Let $\bm U_i^n=(u_i^{n,(0)},u_i^{n,(1)})^\top$ and $\bm U_i^{n,\sigma}=(u_i^{\sigma,(0)},u_i^{\sigma,(1)})^\top$ denote the pre-OE and OE-corrected moment vectors. Since $\sigma_i(u_h^*)$ is computed from the fixed reconstruction $u_h^*$ produced after the final RK stage, the OE process \eqref{eq4:1dOE} admits the explicit update
\begin{equation}\label{eq9:1dOE_sol}
u_i^{\sigma,(0)}=u_i^{n,(0)},
\qquad
u_i^{\sigma,(1)}=u_i^{n,(1)}\exp\left(-\frac{\beta_i\Delta t}{\Delta x}\sigma_i(u_h^*)\right).
\end{equation}
\end{proposition}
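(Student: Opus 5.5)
The plan is to solve the damping equation \eqref{eq7:OE1d_ODE} cellwise in closed form. The key observation is that the coefficient $\beta_i\sigma_i(u_h^*)/\Delta x$ does not depend on $\tau$ or $x$ within $I_i$. Indeed, $\beta_i$ is built from the pre-OE averages $u_m^{n,(0)}$, and $\sigma_i$ is built from the fixed reconstruction $u_h^*$ and from the pre-OE averages; none of these is updated during the pseudo-time evolution. Writing $\alpha_i:=\beta_i\sigma_i(u_h^*)/\Delta x\ge0$, the system on $I_i$ is therefore the linear, constant-coefficient problem $\partial_\tau u_\sigma=-\alpha_i(u_\sigma-P^0u_\sigma)$. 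The problem decouples across cells because $P^0$ acts cell by cell.

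First, I will test the equation against $\phi_i^{(0)}=1$ over $I_i$. Since $\int_{I_i}(u_\sigma-P^0u_\sigma)\,\mathrm dx=0$ by the definition of the $L^2$ projection onto constants, the cell average $\bar u_\sigma(\tau)=P^0u_\sigma|_{I_i}$ satisfies $\frac{\mathrm d}{\mathrm d\tau}\bar u_\sigma=0$. Hence $\bar u_\sigma(\Delta t)=\bar u_\sigma(0)=\frac1{\Delta x}\int_{I_i}u_h^*\,\mathrm dx$. This equals $u_i^{n,(0)}$ because the compact reconstruction reproduces the evolved cell average; I will invoke this moment-matching property of the reconstruction stated in the formulation. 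This gives the first identity.

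Next, I set $w=u_\sigma-P^0u_\sigma$. Since $P^0u_\sigma$ is constant in $\tau$, $w$ satisfies $\partial_\tau w=-\alpha_i w$, so $w(x,\tau)=e^{-\alpha_i\tau}w(x,0)$ pointwise. Then $u_\sigma(x,\Delta t)=u_i^{n,(0)}+e^{-\alpha_i\Delta t}\big(u_h^*(x)-u_i^{n,(0)}\big)$ on $I_i$. Testing against $\phi_i^{(1)}$ and using $\int_{I_i}\phi_i^{(1)}\,\mathrm dx=0$, I get
\[
u_i^{\sigma,(1)}=e^{-\alpha_i\Delta t}\frac{1}{\Delta x}\int_{I_i}u_h^*\phi_i^{(1)}\,\mathrm dx=e^{-\alpha_i\Delta t}u_i^{n,(1)}.
\]
The last equality again uses that the reconstruction matches the evolved first moment. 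Substituting $\alpha_i\Delta t=\beta_i\Delta t\,\sigma_i(u_h^*)/\Delta x$ yields \eqref{eq9:1dOE_sol}. Since only the $\mathbb P^1$ moments of $u_\sigma(\cdot,\Delta t)$ are retained, the higher reconstructed modes play no further role.

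The only delicate point is justifying that the coefficient is frozen, so that the ODE is genuinely linear with constant coefficients rather than nonlinear through $\sigma_i(u_\sigma)$. I will rest this on the statement's hypothesis that $\sigma_i$ and $\beta_i$ are computed once from $u_h^*$ and the pre-OE averages. A second point is the moment-matching property $\frac{1}{\Delta x}\int_{I_i}u_h^*\phi_i^{(l)}=u_i^{n,(l)}$ for $l=0,1$. If this property were only approximate, the formula would instead hold with the moments of $u_h^*$ in place of $u_i^{n,(l)}$; I expect the exact version to hold by construction of the moment reconstruction.
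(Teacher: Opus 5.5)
Your proposal is correct and is essentially the paper's proof. You freeze $\alpha_i=\beta_i\sigma_i(u_h^*)/\Delta x$, test against $1$ to see that the average is conserved, and test against $\phi_i^{(1)}$ to obtain $\frac{\mathrm d}{\mathrm d\tau}u_i^{(1)}=-\alpha_i u_i^{(1)}$; you first solve for $u_\sigma-P^0u_\sigma$ pointwise while the paper tests directly, which is the same computation. The moment-matching property you flag holds exactly, and the paper uses it implicitly: every FAM reconstruction has $c_0=u_i^{(0)}$ and $c_1=12u_i^{(1)}$, and all basis functions $\phi_i^{(\ell)}$ with $\ell\ge2$ are orthogonal to $1$ and $\phi_i^{(1)}$ on $I_i$.
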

\begin{proof}
Set $\alpha_i=\beta_i\sigma_i(u_h^*)/\Delta x$, which is constant during the pseudo-time evolution because $u_h^*$ is frozen. Taking the cell average of \eqref{eq7:OE1d_ODE} shows that the average is unchanged, while taking the scaled first-order moment against $\phi_i^{(1)}$ yields the scalar ordinary differential equation (ODE) $\frac{\mathrm d}{\mathrm d\tau}u_i^{(1)}(\tau)=-\alpha_i u_i^{(1)}(\tau)$. Evaluating at $\tau=\Delta t$ gives \eqref{eq9:1dOE_sol}.
\end{proof}

Thus \eqref{eq9:1dOE_sol} uses only cell averages, scaled first-order moments, and derivative-free jumps of the reconstructed polynomial. The next proposition discusses conservation and high-order smallness in smooth regions.

\begin{proposition}
\label{prop:OEbasic}
\textbf{Key properties of the one-dimensional OE operator.} Assume the OE update is given by \eqref{eq9:1dOE_sol}.
\begin{enumerate}[label=(\roman*),leftmargin=18pt]
\item \textbf{Conservation and nonexpansive damping.} The OE step preserves the cell average and damps the scaled first-order moment nonexpansively:
\[
|u_i^{\sigma,(1)}|=e^{-\theta_i}|u_i^{n,(1)}|\le |u_i^{n,(1)}|,
\qquad
\theta_i:=\frac{\beta_i\Delta t}{\Delta x}\sigma_i(u_h^*)\ge0.
\]
\item \textbf{Accuracy preservation in smooth regions.} If the exact solution is smooth on $\Lambda_i$, the reconstruction is $k$th-order accurate on that stencil, $\Delta t=\mathcal O(\Delta x)$, and the active denominator in Table~\ref{tab:sigma1d} is either below $\epsilon$ or bounded below independently of $\Delta x$, then
\[
\jump{u_h^*}_{i\pm\frac12}=\mathcal O(\Delta x^k),
\qquad
u_i^{\sigma,(1)}-u_i^{n,(1)}=\mathcal O(\Delta x^{k+1}).
\]
Hence the OE correction does not reduce the designed spatial order.
\end{enumerate}
\end{proposition}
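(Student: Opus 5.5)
Part (i) follows directly from Proposition~\ref{thm:1dOE}. The update \eqref{eq9:1dOE_sol} gives $u_i^{\sigma,(0)}=u_i^{n,(0)}$, so the cell average is preserved exactly. For the moment, $\beta_i\ge0$ because it is a maximum of absolute values, and $\Delta t,\Delta x>0$. In both branches of Table~\ref{tab:sigma1d}, $\sigma_i(u_h^*)\ge0$: it is either $0$, or $\gamma_k>0$ times a sum of absolute jumps divided by a positive denominator (the denominator exceeds the tolerance $\epsilon>0$ in the active branch). Hence $\theta_i\ge0$, and taking absolute values in \eqref{eq9:1dOE_sol} gives $|u_i^{\sigma,(1)}|=e^{-\theta_i}|u_i^{n,(1)}|\le|u_i^{n,(1)}|$. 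Summing the preserved averages over cells also yields global conservation.

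For part (ii), I would first bound the jumps. At $x_{i+\frac12}$, both one-sided traces $u_h^*(x_{i+\frac12}^\pm)$ come from degree-$(k-1)$ reconstructions on cells in $\Lambda_i$. Under the assumed $k$th-order accuracy on the stencil, each trace equals the exact solution value $u(x_{i+\frac12},t^n)$ plus $\mathcal O(\Delta x^k)$. The continuity of the smooth exact solution then gives $\jump{u_h^*}_{i\pm\frac12}=\mathcal O(\Delta x^k)$ by the triangle inequality. I take ``$k$th-order accurate on that stencil'' to mean pointwise accuracy of the traces, which the polynomial-exactness results later in the section are meant to supply; I will state this interpretation explicitly.

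Next I would bound $\theta_i$. If the denominator is below $\epsilon$, then $\sigma_i=0$, so $\theta_i=0$ and the correction vanishes identically. Otherwise the denominator is bounded below by a constant $c>0$ independent of $\Delta x$, so $\sigma_i\le\gamma_k c^{-1}\mathcal O(\Delta x^k)=\mathcal O(\Delta x^k)$. Since $\beta_i$ is bounded (it uses $f'$ evaluated at bounded averages) and $\Delta t/\Delta x=\mathcal O(1)$, this gives $\theta_i=\mathcal O(\Delta x^k)$.

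Finally I would write $u_i^{\sigma,(1)}-u_i^{n,(1)}=u_i^{n,(1)}(e^{-\theta_i}-1)$ and use $|e^{-\theta}-1|\le\theta$ for $\theta\ge0$. The remaining ingredient is the size of the scaled moment, and I expect this to be the main point needing care. For smooth data, $\frac{1}{\Delta x}\int_{I_i}u\,\frac{x-x_i}{\Delta x}\,\mathrm dx=\frac{\Delta x}{12}u_x(x_i)+\mathcal O(\Delta x^3)$. The numerical moment approximates this to the scheme's accuracy, so $u_i^{n,(1)}=\mathcal O(\Delta x)$. Combining the two factors gives $u_i^{\sigma,(1)}-u_i^{n,(1)}=\mathcal O(\Delta x)\cdot\mathcal O(\Delta x^k)=\mathcal O(\Delta x^{k+1})$. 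This perturbation of the moment changes the local $\mathbb P^1$ representation by $12\,\mathcal O(\Delta x^{k+1})\phi_i^{(1)}$, which is $\mathcal O(\Delta x^{k+1})$ pointwise. Applied once per time step, it contributes a local error no larger than the designed truncation error, so the spatial order $k$ is not reduced.
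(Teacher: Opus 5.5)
Your proposal is correct and follows the paper's proof essentially step for step. Part (i) is read off from the explicit update. Part (ii) proceeds the same way: $\mathcal O(\Delta x^k)$ trace errors give $\mathcal O(\Delta x^k)$ jumps, the two-case treatment of the denominator gives $\sigma_i=\mathcal O(\Delta x^k)$, and multiplying $u_i^{n,(1)}=\mathcal O(\Delta x)$ by $e^{-\theta_i}-1=\mathcal O(\Delta x^k)$ gives the $\mathcal O(\Delta x^{k+1})$ correction. Your additions (nonnegativity and boundedness of $\beta_i$ and $\sigma_i$, the bound $|e^{-\theta}-1|\le\theta$ in place of the Taylor expansion, and the leading term $\frac{\Delta x}{12}u_x(x_i)$ of the scaled moment) only make explicit what the paper leaves implicit.
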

\begin{proof}
Item (i) follows directly from \eqref{eq9:1dOE_sol}. For item (ii), the two one-sided traces approximate the same smooth exact interface value with error $\mathcal O(\Delta x^k)$, so the jump is $\mathcal O(\Delta x^k)$. If the denominator in the definition of $\sigma_i$ falls below $\epsilon$, then the coefficient is zero. Otherwise, under the stated nondegeneracy assumption, the denominator is bounded away from zero and $\sigma_i=\mathcal O(\Delta x^k)$. Since $u_i^{n,(1)}=\mathcal O(\Delta x)$ for smooth solutions, the expansion $e^{-\theta_i}-1=-\theta_i+\mathcal O(\theta_i^2)$ yields
\[
u_i^{\sigma,(1)}-u_i^{n,(1)}=u_i^{n,(1)}\big(e^{-\theta_i}-1\big)=\mathcal O(\Delta x)\,\mathcal O(\Delta x^k)=\mathcal O(\Delta x^{k+1}).
\]
\end{proof}
\begin{remark}
If $u_p=c+\varepsilon v_p$ on $\Lambda_i$ with large background $|c|$ and small fluctuation $|\varepsilon v_p|$, then the Type~II denominator satisfies $\max_{p\in\Lambda_i}|u_p-S_{\Lambda_i}|=\mathcal O(|c|)$, weakening the damping of the fluctuation. This explains its use in the multiscale tests; Type~I is retained when global variation is the intended normalization scale.
\end{remark}

\subsection{Cartesian two-dimensional extension}\label{sec:FAM_2d}
In this work, we formulate the two-dimensional extension on uniform Cartesian grids. Consider the two-dimensional scalar conservation law
\begin{equation}\label{eq10:2dHCLS}
\partial_t u+\partial_x f_1(u)+\partial_y f_2(u)=0,
\qquad
(x,y,t)\in\Omega\times[0,T].
\end{equation}
On a rectangular cell $I_{i,j}=[x_{i-\frac12},x_{i+\frac12}]\times[y_{j-\frac12},y_{j+\frac12}]$, let $I_i=[x_{i-\frac12},x_{i+\frac12}]$, $I_j=[y_{j-\frac12},y_{j+\frac12}]$, $\Delta x=x_{i+\frac12}-x_{i-\frac12}$, $\Delta y=y_{j+\frac12}-y_{j-\frac12}$, and $(x_i,y_j)$ be the cell center. We evolve the cell average and the two scaled first-order moments,
\[
\phi_{i,j}^{(0)}(x,y)=1,
\qquad
\phi_{i,j}^{(1)}(x,y)=\frac{x-x_i}{\Delta x},
\qquad
\phi_{i,j}^{(2)}(x,y)=\frac{y-y_j}{\Delta y},
\]
\[
u_{i,j}^{(l)}(t)\approx \frac{1}{\Delta x\Delta y}\iint_{I_{i,j}}u(x,y,t)\,\phi_{i,j}^{(l)}(x,y)\,\mathrm dx\mathrm dy,
\qquad l=0,1,2.
\]
The corresponding Cartesian weak form is
\begin{equation}\label{eq11:2dSemi}
\frac{\mathrm d}{\mathrm dt}u_{i,j}^{(l)}(t)
=\frac{1}{\Delta x\Delta y}
\left(
\iint_{I_{i,j}}\bm f(u_h)\cdot\nabla\phi_{i,j}^{(l)}\,\mathrm dx\mathrm dy
-\int_{\partial I_{i,j}}\widehat{\bm f\cdot\bm n}\,\phi_{i,j}^{(l)}\,\mathrm ds
\right),
\qquad l=0,1,2,
\end{equation}
where $\bm f=(f_1,f_2)$ and $\bm n$ is the outward normal unit vector.
With the SSP-RK3 method, one time step takes the form
\begin{equation}\label{eq12:RK}
\begin{cases}
{\bm U}^{n,1}_{i,j}={\bm U}^{n,0}_{i,j}+\Delta t\,\bm{\mathcal R}_{i,j}(u_h^{n,0}),\\
{\bm U}^{n,2}_{i,j}=\dfrac34{\bm U}^{n,0}_{i,j}+\dfrac14\Big({\bm U}^{n,1}_{i,j}+\Delta t\,\bm{\mathcal R}_{i,j}(u_h^{n,1})\Big),\\
{\bm U}^{n}_{i,j}=\dfrac13{\bm U}^{n,0}_{i,j}+\dfrac23\Big({\bm U}^{n,2}_{i,j}+\Delta t\,\bm{\mathcal R}_{i,j}(u_h^{n,2})\Big),\\
{\bm U}^{n,\sigma}_{i,j}=\mathcal F_{\rm OE}\{{\bm U}_\kappa^n\}_{\kappa\in\mathcal{S}_{i,j}}, 
\end{cases}
\end{equation}
where ${\bm U}_{i,j}^{n,0}={\bm U}_{i,j}^{n-1,\sigma}$, $\mathcal{S}_{i,j}=\bigcup^{1}_{p,q=-1}\Lambda_{i+p,j+q}$, $\Lambda_{i,j}=\{(i+s,j+\ell):s,\ell\in\{-1,0,1\}\}$, and ${\bm U}_{i,j}=\big(u_{i,j}^{(0)},u_{i,j}^{(1)},u_{i,j}^{(2)}\big)^\top$.

The rectangular-grid OE update keeps the cell average and damps the two scaled first-order moments with a common exponential factor,
\begin{equation}\label{eq:2dOE_sol}
(u_{i,j}^{\sigma,(0)},u_{i,j}^{\sigma,(1)},u_{i,j}^{\sigma,(2)})=
(u_{i,j}^{n,(0)},\delta_{i,j}u_{i,j}^{n,(1)},\delta_{i,j}u_{i,j}^{n,(2)}),
\end{equation}
with
\[
\delta_{i,j}=\exp\left(-\frac{\beta_{i,j}^x\Delta t}{\Delta x}\sigma_{i,j}^x(u_h^*)-\frac{\beta_{i,j}^y\Delta t}{\Delta y}\sigma_{i,j}^y(u_h^*)\right),
\]
where $\beta_{i,j}^x=\max_{\kappa\in\Lambda_{i,j}}|f_1'(u_\kappa^{n,(0)})|$ and $\beta_{i,j}^y=\max_{\kappa\in\Lambda_{i,j}}|f_2'(u_\kappa^{n,(0)})|$ are local wave-speed estimates in the $x$- and $y$-directions. The jumps across the vertical and horizontal interfaces are
\[
\jump{u_h^*}_{i+\frac12,j}=u_h^*(x_{i+\frac12}^+,y_j)-u_h^*(x_{i+\frac12}^-,y_j),
\qquad
\jump{u_h^*}_{i,j+\frac12}=u_h^*(x_i,y_{j+\frac12}^+)-u_h^*(x_i,y_{j+\frac12}^-),
\]
and we abbreviate their directional accumulations by
\[
J_{i,j}^x:=|\jump{u_h^*}_{i-\frac12,j}|+|\jump{u_h^*}_{i+\frac12,j}|,
\qquad
J_{i,j}^y:=|\jump{u_h^*}_{i,j-\frac12}|+|\jump{u_h^*}_{i,j+\frac12}|.
\]
We further introduce the global average and local reference sum
\[
\bar u_\Omega:=\frac{1}{N_xN_y}\sum_{p=1}^{N_x}\sum_{q=1}^{N_y}u_{p,q}^{n,(0)},
\qquad
S_{\Lambda_{i,j}}:=\sum_{(p,q)\in\Lambda_{i,j}}u_{p,q}^{n,(0)},
\]
together with the corresponding denominators
\[
D_\Omega:=\max_{p,q}|u_{p,q}^{n,(0)}-\bar u_\Omega|,
\qquad
D_{i,j}:=\max_{(p,q)\in\Lambda_{i,j}}|u_{p,q}^{n,(0)}-S_{\Lambda_{i,j}}|.
\]
The two Cartesian OE coefficients are then defined as in Table~\ref{tab:sigma2d}.

\begin{table}[t]
\centering
\caption{Two definitions of the Cartesian damping coefficients $(\sigma_{i,j}^x,\sigma_{i,j}^y)$ used in the two-dimensional OE update.}
\label{tab:sigma2d}
\small
\begin{tabular}{c|l|l}
	\toprule[1.2pt]
	Coefficient & Type I &Type II \\
	\midrule
	$(\sigma_{i,j}^x,\sigma_{i,j}^y)$  &
	$\displaystyle
	\left\{
	\begin{aligned}
		&(0,0), &&\text{if }D_\Omega<\epsilon_\Omega,\\
		&\left(\gamma_k\frac{J_{i,j}^x}{D_\Omega},\,\gamma_k\frac{J_{i,j}^y}{D_\Omega}\right), &&\text{otherwise},
	\end{aligned}
	\right.$ 
	 & 
	$\displaystyle
	\left\{
	\begin{aligned}
		&(0,0), &&\text{if }D_{i,j}<\epsilon_{i,j},\\
		&\left(\gamma_k\frac{J_{i,j}^x}{D_{i,j}},\,\gamma_k\frac{J_{i,j}^y}{D_{i,j}}\right), &&\text{otherwise},
	\end{aligned}
	\right.$ \\
	\bottomrule[1.2pt]
\end{tabular}
\end{table}

In Table~\ref{tab:sigma2d}, the zero-denominator tolerance is chosen analogously,
\[
\epsilon_\Omega=10^{-14}\max\{\max_{p,q}|u_{p,q}^{n,(0)}|,1\},
\qquad
\epsilon_{i,j}=10^{-14}\max\{\max_{(p,q)\in\Lambda_{i,j}}|u_{p,q}^{n,(0)}|,1\},
\]
for Type~I and Type~II, respectively. As in one dimension, the Type~II reference quantity is intentionally unnormalized, making it less dissipative for small-amplitude structures on a large background. We use Type~II in the multiscale tests and Type~I when the global variation is the intended scale. In two dimensions, $\gamma_3=\gamma_4=25$ and $\gamma_5=\gamma_6=15$; the supplementary sensitivity plots show only mild dependence on nearby values.

\begin{proposition}
\label{prop:2dOEbasic}
\textbf{Key properties of the Cartesian OE operator.} Assume the OE update is given by \eqref{eq:2dOE_sol}.
\begin{enumerate}[label=(\roman*),leftmargin=18pt]
\item \textbf{Conservation and common nonexpansive damping.} The OE step preserves the cell average and damps both scaled first-order moments with the same factor:
\[
u_{i,j}^{\sigma,(0)}=u_{i,j}^{n,(0)},
\qquad
|u_{i,j}^{\sigma,(r)}|=\delta_{i,j}|u_{i,j}^{n,(r)}|\le |u_{i,j}^{n,(r)}|,
\quad r=1,2,
\]
with $0<\delta_{i,j}\le 1$.
\item \textbf{Accuracy preservation in smooth regions.} Let $h\sim\max\{\Delta x,\Delta y\}$ on the uniform Cartesian meshes. If the exact solution is smooth on $\Lambda_{i,j}$, the reconstruction is $k$th-order accurate at the face-center points used in the OE jumps and at the quadrature points entering \eqref{eq11:2dSemi}, $\Delta t=\mathcal O(h)$, and the active denominator in Table~\ref{tab:sigma2d} is either below $\epsilon$ or bounded below independently of $h$, then
\[
J_{i,j}^x=\mathcal O(h^k),
\qquad
J_{i,j}^y=\mathcal O(h^k),
\qquad
u_{i,j}^{\sigma,(r)}-u_{i,j}^{n,(r)}=\mathcal O(h^{k+1}),
\quad r=1,2.
\]
Hence the Cartesian OE correction does not reduce the designed spatial order.
\end{enumerate}
\end{proposition}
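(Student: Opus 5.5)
The argument mirrors the proof of Proposition~\ref{prop:OEbasic}, run directionwise on the Cartesian grid.

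For item (i), cell-average preservation is read off from \eqref{eq:2dOE_sol}. For the damping factor we need the exponent in $\delta_{i,j}$ to be nonpositive. Each term $\beta^x_{i,j}\Delta t\,\sigma^x_{i,j}/\Delta x$ is nonnegative because $\beta^x_{i,j}\ge0$ (a maximum of absolute values), $\Delta t,\Delta x>0$, and $\sigma^x_{i,j}\ge0$. The last fact holds since, in Table~\ref{tab:sigma2d}, $\sigma^x_{i,j}$ is either zero or $\gamma_k J^x_{i,j}$ divided by a denominator that is at least $\epsilon>0$ in the active branch, with $J^x_{i,j}\ge 0$. The same reasoning applies to the $y$-term. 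Hence $0<\delta_{i,j}\le1$: positivity comes from the exponential being finite, and the upper bound from the nonpositive exponent. Finally, $|u^{\sigma,(r)}_{i,j}|=\delta_{i,j}|u^{n,(r)}_{i,j}|$ for $r=1,2$.

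For item (ii), I first bound the jumps. At the face center $(x_{i+\frac12},y_j)$, the two traces $u_h^*(x_{i+\frac12}^\pm,y_j)$ come from reconstructions on the neighboring cells $I_{i,j}$ and $I_{i+1,j}$, both lying in $\Lambda_{i,j}$. By hypothesis each trace approximates the same exact value $u(x_{i+\frac12},y_j)$ with error $\mathcal O(h^k)$. The triangle inequality then gives $\jump{u_h^*}_{i+\frac12,j}=\mathcal O(h^k)$, and the same holds for the other three faces, so $J^x_{i,j},J^y_{i,j}=\mathcal O(h^k)$. Next I bound the coefficients by splitting on the branch of Table~\ref{tab:sigma2d}. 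If the active denominator is below $\epsilon$, both coefficients vanish, $\delta_{i,j}=1$, and the correction is exactly zero. Otherwise the denominator is bounded below by a constant independent of $h$, so $\sigma^x_{i,j},\sigma^y_{i,j}=\mathcal O(h^k)$.

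Setting $\theta_{i,j}$ equal to minus the exponent, the local wave speeds $\beta^{x,y}_{i,j}$ are bounded because the data are bounded in smooth regions and $f_1',f_2'$ are continuous. Together with $\Delta t=\mathcal O(h)$ and $\Delta x,\Delta y\sim h$ on the uniform mesh, this gives $\Delta t/\Delta x,\Delta t/\Delta y=\mathcal O(1)$ and hence $\theta_{i,j}=\mathcal O(h^k)$. Then $|\delta_{i,j}-1|\le\theta_{i,j}=\mathcal O(h^k)$, using $|e^{-\theta}-1|\le\theta$ for $\theta\ge0$.

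It remains to show $u^{n,(r)}_{i,j}=\mathcal O(h)$. For a smooth function, the scaled first moment against $(x-x_i)/\Delta x$ equals $\frac{\Delta x}{12}\partial_x u(x_i,y_j)+\mathcal O(h^2)$ by Taylor expansion, because the zeroth-order term integrates to zero. The numerical moment differs from the exact moment by at most the scheme error, which is $o(1)$ and in fact high order. I expect this step to be the main subtlety, since it relies on the pre-OE moments being close to the exact moments. I would state it as following from the accuracy assumption, exactly as was done in Proposition~\ref{prop:OEbasic}. Multiplying then gives $u^{\sigma,(r)}_{i,j}-u^{n,(r)}_{i,j}=u^{n,(r)}_{i,j}(\delta_{i,j}-1)=\mathcal O(h)\,\mathcal O(h^k)=\mathcal O(h^{k+1})$, which is one order beyond the designed spatial order.
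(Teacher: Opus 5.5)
Your proposal is correct and follows essentially the same route as the paper's proof. Item (i) is read off from the explicit update. Item (ii) combines three estimates: the face jumps are $\mathcal O(h^k)$, the two-branch argument gives $\sigma^x_{i,j},\sigma^y_{i,j}=\mathcal O(h^k)$, and the scaled moments satisfy $u^{n,(r)}_{i,j}=\mathcal O(h)$, so that $\delta_{i,j}-1=\mathcal O(h^k)$ yields the $\mathcal O(h^{k+1})$ correction. You also make explicit several points the paper leaves implicit: the coefficients are nonnegative, $\beta^{x,y}_{i,j}$ stays bounded, $\Delta x\sim\Delta y\sim h$ is needed so that $\Delta t/\Delta y=\mathcal O(1)$, and $u^{n,(r)}_{i,j}=\mathcal O(h)$ is a standing assumption on the pre-OE state rather than something derived in the proof.
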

\begin{proof}
Item (i) follows directly from \eqref{eq:2dOE_sol}. For item (ii), each pair of one-sided traces at a face approximates the same smooth exact value with error $\mathcal O(h^k)$, so the directional jump sums are $\mathcal O(h^k)$. If the denominator in the chosen coefficient falls below $\epsilon$, the coefficient is zero. Otherwise, under the stated nondegeneracy assumption, it is bounded away from zero, whence $\sigma_{i,j}^x(u_h^*)=\mathcal O(h^k)$ and $\sigma_{i,j}^y(u_h^*)=\mathcal O(h^k)$. Since the scaled first-order moments satisfy $u_{i,j}^{n,(1)}=\mathcal O(h)$ and $u_{i,j}^{n,(2)}=\mathcal O(h)$ for smooth solutions, the expansion $\delta_{i,j}-1=\mathcal O(h^k)$ yields the stated $\mathcal O(h^{k+1})$ correction.
\end{proof}

For $k=3,4,5,6$ these jumps can be written explicitly in the compact matrix form
\begin{equation}\label{eq:2Djump}
\left\{
\begin{aligned}
\jump{u_h^*}_{i+\frac12,j}&=
\left\langle \bfA_k,{\bf U}^{n,(0)}_x\right\rangle+
\left\langle \bfB_k,{\bf U}^{n,(1)}_x\right\rangle+
\left\langle \bfC_k,{\bf U}^{n,(2)}_x\right\rangle,\\
\jump{u_h^*}_{i,j+\frac12}&=
\left\langle \bfA_k,({\bf U}^{n,(0)}_y)^\top\right\rangle+
\left\langle \bfC_k,({\bf U}^{n,(1)}_y)^\top\right\rangle+
\left\langle \bfB_k,({\bf U}^{n,(2)}_y)^\top\right\rangle,
\end{aligned}
\right.
\end{equation}
where $\langle\cdot,\cdot\rangle$ denotes the Frobenius inner product, ${\bf U}^{n,(r)}_x=[u_{i+s,j+\ell}^{n,(r)}]\in\mathbb R^{4\times3}$ and ${\bf U}^{n,(r)}_y=[u_{i+\ell,j+s}^{n,(r)}]\in\mathbb R^{3\times4}$ collect the stencil moments with $r=0,1,2$, $\ell=-1,0,1$, and $s=-1,0,1,2$, and the constant matrices $\bfA_k$, $\bfB_k$, and $\bfC_k$ are listed in tabular form in the supplementary material.

\subsection{Systems and componentwise implementation}\label{sec:FAM_system}

We now consider the hyperbolic system $\partial_t\bm u+\nabla\cdot\bm f(\bm u)=0$ with $\bm u\in\mathbb R^N$. In the proposed FAM formulation, RK-stage reconstructions remain linear and are applied componentwise to conservative variables. The OE damping is supplied by the single end-of-step correction; the standard bound-preserving/positivity-preserving (BP/PP) limiter is activated only when explicitly identified in the benchmark descriptions and in the separately noted no-OE ablation variant.

The cost profile follows from two structural features: the evolved state is fixed at the $\mathbb P^1$ moment level, and higher order changes only the precomputed linear reconstruction operator. 

\begin{remark}[Fixed linear reconstructions and variable transformations]
Each RK-stage reconstruction used by FAM is a fixed linear operator on the stencil moments. It therefore commutes with any \emph{fixed} linear change of variables: reconstructing in transformed variables and mapping back yields the same polynomial as reconstructing componentwise in conservative variables. The relevant issue is thus not the fixed transform itself, but the cost and possible oscillation-control benefit of state-dependent local characteristic-variable transformations. Section~\ref{sec:NumTest} will compare the conservative-variable and local characteristic-variable FAM5 implementations. 
\end{remark}

At time $t^n$, let $\bm U_K^n$ denote the FAM solution after the final RK stage and before the OE process, and let $\bm U_K^{n,\sigma}$ represent the OE-corrected moments. We use $\varrho(\cdot)$ to denote the spectral radius of a matrix, consistent with the Rusanov flux used in all numerical tests. In one dimension, with $\bm U_i^n=({\bm u}_i^{n,(0)},{\bm u}_i^{n,(1)})^\top$, the corresponding OE process $\bm U_i^{n,\sigma}=\mathcal F_{\rm OE}\{\bm U_\kappa^n\}_{\kappa\in\Lambda_i}$ can be expressed componentwise as
\begin{equation}\label{eq:1dOE_system}
{\bm u}_i^{\sigma,(0)}={\bm u}_i^{n,(0)},
\qquad
{\bm u}_i^{\sigma,(1)}={\bm u}_i^{n,(1)}\exp\left(-\beta_i\frac{\Delta t}{\Delta x}\sigma_i({\bm u}_h^*)\right),
\end{equation}
where
\[
\beta_i:=\max_{\kappa\in\Lambda_i}\varrho\!\left(\frac{\partial \bm f}{\partial \bm u}({\bm u}_\kappa^{n,(0)})\right),
\qquad
\sigma_i({\bm u}_h^*)=\max_{1\le \ell\le N}\widehat\sigma_i(u_h^{*,\ell}),
\]
and $\widehat\sigma_i$ is the scalar coefficient from Table~\ref{tab:sigma1d}. For two-dimensional systems we write $\bm U_{i,j}^n=({\bm u}_{i,j}^{n,(0)},{\bm u}_{i,j}^{n,(1)},{\bm u}_{i,j}^{n,(2)})^\top$. The OE process $\bm U_{i,j}^{n,\sigma}=\mathcal F_{\rm OE}\{\bm U_\kappa^n\}_{\kappa\in\Lambda_{i,j}}$ becomes
\begin{equation}\label{eq:2dOE_system}
{\bm u}_{i,j}^{\sigma,(0)}={\bm u}_{i,j}^{n,(0)},
\qquad
{\bm u}_{i,j}^{\sigma,(1)}={\bm u}_{i,j}^{n,(1)}\delta_{i,j},
\qquad
{\bm u}_{i,j}^{\sigma,(2)}={\bm u}_{i,j}^{n,(2)}\delta_{i,j},
\end{equation}
with
\[
\delta_{i,j}=\exp\left(-\frac{\beta_{i,j}^x\Delta t}{\Delta x}\sigma^{x,\rm sys}_{i,j}({\bm u}_h^*)-\frac{\beta_{i,j}^y\Delta t}{\Delta y}\sigma^{y,\rm sys}_{i,j}({\bm u}_h^*)\right),
\]
where
\[
\beta_{i,j}^x:=\max_{\kappa\in\Lambda_{i,j}}\varrho\!\left(\frac{\partial \bm f_1}{\partial \bm u}({\bm u}_\kappa^{n,(0)})\right),
\qquad
\beta_{i,j}^y:=\max_{\kappa\in\Lambda_{i,j}}\varrho\!\left(\frac{\partial \bm f_2}{\partial \bm u}({\bm u}_\kappa^{n,(0)})\right),
\]
\[
\sigma^{x,\rm sys}_{i,j}({\bm u}_h^*)=\max_{1\le \ell\le N}\sigma_{i,j}^x(u_h^{*,\ell}),
\qquad
\sigma^{y,\rm sys}_{i,j}({\bm u}_h^*)=\max_{1\le \ell\le N}\sigma_{i,j}^y(u_h^{*,\ell}).
\]

Hence the system OE step preserves conservative cell averages and damps each scaled first-order moment vector by a common scalar factor assembled from componentwise jump indicators. This is the mechanism used in all the tested cases.

The BP/PP limiter can be used in conjunction with the OE process. When active in our tested cases, the BP/PP limiter uses the optimal cell-average decomposition strategy of \cite{CDW1,CDW2}, as in \cite{FanWu}. This limiter is separate from OE. 

\begin{algorithm}[t]
\caption{One time step of the 1D FAM$k$ schemes with the end-of-step OE correction and optional BP/PP safeguard}
\label{alg:1dFAM}
\begin{algorithmic}[1]
\State Given $\bm U_i^{n,0}=\bm U_i^{n-1,\sigma}$, $i=1,\dots,N_x$.
\For{each SSP-RK substep}
\State Reconstruct the current stage solution componentwise by the degree-$(k-1)$ FAM polynomial; see Subsection~\ref{sec:FAM_HLR}.
\State Update the moments by the corresponding Shu--Osher formula in \eqref{eq3:1dRK}.
\State After the substep, apply the auxiliary BP/PP limiter only if it is activated for the current test; this is a separate admissibility safeguard, not part of the OE definition.
\EndFor
\State Compute $\bm U_i^{n,\sigma}$ from $\bm U_i^n$ by the OE update \eqref{eq9:1dOE_sol} (scalar) or \eqref{eq:1dOE_system} (systems).
\end{algorithmic}
\end{algorithm}

\begin{remark}
In our computations, we use the Lax--Friedrichs (i.e., Rusanov) numerical flux with the same wave-speed convention across the compared methods. Volume and face integrals are evaluated by four-point Gauss quadrature in each coordinate direction, whose formal order is adequate for the polynomial degrees used in the reported tests. Periodic, reflective, and outflow ghost cells treat the scaled first-order moments consistently with the cell averages; for inflow boundaries we set the scaled first-order moments in ghost cells to zero.
\end{remark}

\subsection{Compact linear reconstructions and consistency}\label{sec:FAM_HLR}

The compact reconstructions match available cell averages and scaled first-order moments on compact stencils, solve the resulting square systems in orthogonal polynomial bases, and retain the terms needed for the target consistency order. Explicit coefficients, a worked system, and jump matrices are deferred to the supplementary material.

The design is paired: in one dimension, FAM3/FAM4 share a cubic auxiliary polynomial and FAM5/FAM6 share a quintic one; on Cartesian meshes, the corresponding pairs share incomplete quartic and degree-six auxiliary ansatzes. This paired structure lets all four schemes reuse the same fixed-moment evolved state.

\paragraph{One-dimensional reconstructions}
For the 3rd- and 4th-order schemes, we construct a cubic polynomial $Q(x)$ satisfying
\begin{equation}\label{eq5:Qx}
\frac{1}{\Delta x}\int_{I_\ell}Q(x)\,\mathrm dx=u_\ell^{(0)},\quad \ell=i-1,i,i+1,
\qquad
\frac{1}{\Delta x}\int_{I_i}Q(x)\frac{x-x_i}{\Delta x}\,\mathrm dx=u_i^{(1)}.
\end{equation}
Writing $Q(x)=\sum_{\ell=0}^{3}c_\ell\phi_i^{(\ell)}(x)$ in the local orthogonal basis collected in the supplementary material, the coefficients $c_\ell$ follow from a $4\times4$ linear system. The 3rd-order reconstruction is then the quadratic projection $p_2(x)=\sum_{\ell=0}^{2}c_\ell\phi_i^{(\ell)}(x)$, while the 4th-order reconstruction is $p_3(x)=Q(x)$.

For the 5th- and 6th-order schemes, we construct a quintic polynomial $Q(x)$ satisfying
\begin{equation}\label{eq6:Qx}
\frac{1}{\Delta x}\int_{I_\ell}Q(x)\,\mathrm dx=u_\ell^{(0)},
\qquad
\frac{1}{\Delta x}\int_{I_\ell}Q(x)\frac{x-x_\ell}{\Delta x}\,\mathrm dx=u_\ell^{(1)},
\qquad
\ell=i-1,i,i+1.
\end{equation}
Here the coefficients of $Q(x)=\sum_{\ell=0}^{5}c_\ell\phi_i^{(\ell)}(x)$ are obtained from a $6\times6$ moment-matching system. The 5th-order reconstruction is $p_4(x)=\sum_{\ell=0}^{4}c_\ell\phi_i^{(\ell)}(x)$ and the 6th-order reconstruction is $p_5(x)=Q(x)$.

\begin{proposition}
\label{prop:1d-recon}
\textbf{One-dimensional reconstruction exactness and consistency.} Let $\mathcal R_i^{(k)}u$ denote the degree-$(k-1)$ polynomial returned by the FAM$k$ reconstruction on $I_i$ from the exact stencil moments of a smooth function $u$.
\begin{enumerate}[label=(\roman*),leftmargin=18pt]
\item The cubic and quintic moment-matching systems are uniquely solvable for arbitrary admissible stencil moments.
\item The reconstructions reproduce polynomials of the target degree exactly: FAM3 and FAM4 are exact on $\mathbb P^2$ and $\mathbb P^3$, respectively, while FAM5 and FAM6 are exact on $\mathbb P^4$ and $\mathbb P^5$, respectively.
\item If $u\in C^k$ on the reconstruction stencil, then
\[
\|u-\mathcal R_i^{(k)}u\|_{L^\infty(I_i)}+|u(x_{i\pm\frac12})-(\mathcal R_i^{(k)}u)(x_{i\pm\frac12})|=\mathcal O(\Delta x^k).
\]
Consequently, the reconstructed traces and the volume term in \eqref{eq:1dSemi} are $k$th-order consistent.
\end{enumerate}
\end{proposition}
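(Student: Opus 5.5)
The plan is to reduce everything to a scaled reference configuration and then use linearity of the moment-matching operator. Translate and scale by $\xi=(x-x_i)/\Delta x$, so that $I_i\mapsto[-\tfrac12,\tfrac12]$ and $I_{i\pm1}\mapsto[\pm1-\tfrac12,\pm1+\tfrac12]$. The moment functionals become $\int_{I_\ell}\,\cdot\,\mathrm d\xi$ and $\int_{I_\ell}(\xi-\ell)\,\cdot\,\mathrm d\xi$, for $\ell\in\{-1,0,1\}$. In these variables both systems \eqref{eq5:Qx} and \eqref{eq6:Qx} are independent of $\Delta x$ and $i$. Item (i) then reduces to the invertibility of two fixed matrices: a $4\times4$ matrix for the cubic and a $6\times6$ matrix for the quintic.

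\textbf{Step 1 (unisolvence).} Rather than computing determinants, I would show that the homogeneous problem has only the trivial solution. For the cubic, suppose $Q\in\mathbb P^3$ has zero averages on the three cells and zero first moment on the middle cell. Let $P(\xi)=\int_{-3/2}^{\xi}Q$, so $P\in\mathbb P^4$. The zero averages give $P(-\tfrac32)=P(-\tfrac12)=P(\tfrac12)=P(\tfrac32)=0$. Hence
\[
P=c(\xi^2-\tfrac94)(\xi^2-\tfrac14).
\]
Integrating by parts, the zero middle moment gives $\int_{-1/2}^{1/2}\xi Q\,\mathrm d\xi=-\int_{-1/2}^{1/2}P\,\mathrm d\xi=0$. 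But $P$ has one sign on $(-\tfrac12,\tfrac12)$, so $c=0$ and therefore $Q=0$.

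For the quintic, the same $P\in\mathbb P^6$ vanishes at the four interfaces. Each first-moment condition gives $\int_{I_\ell}P=0$, using $P=0$ at the endpoints of $I_\ell$. Write $P=(\xi^2-\tfrac94)(\xi^2-\tfrac14)R$ with $R\in\mathbb P^2$. The three integral conditions then form a $3\times3$ system for the coefficients of $R$. I would split this into even and odd parts using the symmetry $\xi\mapsto-\xi$. The odd part of $R$ is $b\xi$: it gives zero on the middle cell automatically, and on the outer cells it integrates to $\pm b\int_{1/2}^{3/2}\xi\,w\,\mathrm d\xi\neq0$, since $w=(\xi^2-\tfrac94)(\xi^2-\tfrac14)$ has one sign there. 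So $b=0$. The even part leaves a $2\times2$ system, namely the middle and outer integrals of $w\cdot1$ and $w\cdot\xi^2$. Its determinant is a short explicit rational computation, and I expect this to be the only step needing actual arithmetic and hence the main obstacle; I would simply verify that it is nonzero. (Alternatively, the determinant of the full $6\times6$ matrix in the orthogonal basis can be quoted from the supplementary table.)

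\textbf{Step 2 (exactness).} If $u\in\mathbb P^3$, respectively $\mathbb P^5$, then $u$ itself satisfies the matching system. Uniqueness from Step 1 gives $Q=u$, which proves FAM4 and FAM6. For FAM3 and FAM5, take $u\in\mathbb P^2$, respectively $\mathbb P^4$. Then $Q=u$, and its expansion in the local orthogonal basis $\{\phi_i^{(\ell)}\}$, where $\phi_i^{(\ell)}$ has exact degree $\ell$, has $c_3=0$, respectively $c_5=0$. Truncation therefore returns $u$. This uses only that the basis is hierarchical in degree.

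\textbf{Step 3 (error).} Let $T$ be the Taylor polynomial of $u$ of degree $k-1$ about $x_i$. On the stencil $[x_{i-3/2},x_{i+3/2}]$ we have $u-T=\mathcal O(\Delta x^k)$ uniformly when $u\in C^k$. By linearity and exactness, $u-\mathcal R_i^{(k)}u=(u-T)-\mathcal R_i^{(k)}(u-T)$. The reconstruction operator, viewed as a map from $L^\infty$ of the stencil to $L^\infty(I_i)$, has norm bounded by a constant independent of $\Delta x$. This holds because, in reference variables, it is a fixed inverse matrix composed with moment functionals bounded by $1$, together with bounded basis functions; truncation is also bounded. Hence the $L^\infty(I_i)$ error is $\mathcal O(\Delta x^k)$. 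The interface traces are included since $x_{i\pm1/2}\in\overline{I_i}$. Consistency of the fluxes and of the quadrature volume term then follows from the Lipschitz continuity of $f$ and of the monotone flux $\hat f$.
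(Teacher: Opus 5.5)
Your proposal is correct. For part (i) it takes a genuinely different route from the paper.

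The paper gets unique solvability from the explicit symbolic inverses recorded in the appendix: every right-hand side is mapped to a solution by a fixed linear formula, so the square matrices must be invertible. You instead prove injectivity structurally. The antiderivative $P$ of a homogeneous solution vanishes at the four interfaces. This reduces the cubic case to a single sign argument, and the quintic case to three conditions $\int_{I_\ell}wR=0$ on $R\in\mathbb P^2$, with $w=(\xi^2-\tfrac{9}{4})(\xi^2-\tfrac{1}{4})$.

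The $2\times2$ determinant you leave unchecked is indeed nonzero. The entries are $\int_{-1/2}^{1/2}w=\tfrac{11}{30}$, $\int_{-1/2}^{1/2}\xi^2w=\tfrac{1}{56}$, $\int_{1/2}^{3/2}w=-\tfrac{19}{30}$ and $\int_{1/2}^{3/2}\xi^2w=-\tfrac{41}{56}$, giving the value $-\tfrac{9}{35}$. You can in fact drop both the parity split and this arithmetic. Since $w$ has a strict sign inside each cell, each condition $\int_{I_\ell}wR=0$ forces $R$ to vanish somewhere inside $I_\ell$. A quadratic with roots in three disjoint open cells is zero, and the cubic case is the same argument with $R$ constant.

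Your route explains \emph{why} these stencil/moment combinations are unisolvent and needs no computer algebra. The paper's route has the practical advantage of producing, at the same time, the coefficient formulas used in the implementation.

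For (ii) and (iii) you follow the paper's logic: uniqueness gives exactness, and linearity plus exactness plus Taylor expansion gives the error bound. You make explicit two points the paper leaves tacit. First, truncating the hierarchical expansion preserves exactness for FAM3 and FAM5. Second, the reconstruction operator is bounded in $L^\infty$ uniformly in $\Delta x$, by scaling to the reference stencil. That bound is what turns the paper's ``first neglected term'' remark into a proof under only $C^k$ regularity.
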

\begin{proof}
The supplementary material gives explicit coefficient formulas showing that the cubic and quintic coefficients are linear combinations of the prescribed moments; hence the corresponding square systems are uniquely solvable. Exactness follows because if $q$ already belongs to the target polynomial space, then its exact stencil moments satisfy the defining constraints; uniqueness therefore implies that the recovered polynomial is $q$ itself. The stated $\mathcal O(\Delta x^k)$ consistency estimate then follows from a Taylor expansion of $u$ about $x_i$: the reconstruction operator is linear, acts on a fixed uniform stencil, and is exact on all polynomials up to degree $k-1$, so the first neglected term is of order $\Delta x^k$ both in the cell interior and at the interface points.
\end{proof}

\paragraph{Two-dimensional reconstructions}
On a $3\times3$ Cartesian patch, we relabel the central cell and its neighbors as $I_{i-1,j-1}\equiv I_1, I_{i,j-1}\equiv I_2, I_{i+1,j-1}\equiv I_3, I_{i-1,j}\equiv I_4, I_{i,j}\equiv I_5, I_{i+1,j}\equiv I_6, I_{i-1,j+1}\equiv I_7, I_{i,j+1}\equiv I_8, I_{i+1,j+1}\equiv I_9.$ Correspondingly, the moments are denoted as $u^{(l)}_{1},\ldots,u^{(l)}_{9} $ for $l=0,1,2$. Let $\mathbb P_m=\operatorname{span}\{x^a y^b:\ a,b\ge 0,\ a+b\le m\}$ be the complete polynomial space of total degree at most $m$, and let $\Pi_m$ denote the projection onto $\mathbb P_m$.

For the 3rd- and 4th-order schemes, we first construct an auxiliary polynomial $Q(x,y)$ in an incomplete quartic space of dimension $11$. This auxiliary polynomial is determined by the following $11$ moment constraints:
\begin{equation}\label{Qxy_quartic}
\left\{
\begin{aligned}
&\frac{1}{\Delta x\Delta y}\int_{I_\ell}Q(x,y)\,\mathrm dx\mathrm dy=u_\ell^{(0)},\qquad \ell=1,\dots,9,\\
&\frac{1}{\Delta x\Delta y}\int_{I_5}Q(x,y)\frac{x-x_5}{\Delta x}\,\mathrm dx\mathrm dy=u_5^{(1)},\\
&\frac{1}{\Delta x\Delta y}\int_{I_5}Q(x,y)\frac{y-y_5}{\Delta y}\,\mathrm dx\mathrm dy=u_5^{(2)}.
\end{aligned}
\right.
\end{equation}
In an orthogonal basis, these $11$ constraints determine the $11$ coefficients of $Q(x,y)$ uniquely. The final reconstruction polynomials are then obtained by projecting the auxiliary polynomial onto complete polynomial spaces: $p_{3}(x,y)=\Pi_2 Q(x,y), p_{4}(x,y)=\Pi_3 Q(x,y).$ Thus, FAM3 uses a complete quadratic polynomial, while FAM4 uses a complete cubic polynomial. The incomplete quartic space is used only at the auxiliary stage to close the square moment system.

For the 5th- and 6th-order schemes, we construct an auxiliary polynomial $Q(x,y)$ in an incomplete degree-six space of dimension $23$. This auxiliary polynomial is determined by the following $23$ moment constraints:
\begin{equation}\label{Qxy_quintic}
\left\{
\begin{aligned}
&\frac{1}{\Delta x\Delta y}\int_{I_\ell}Q(x,y)\,\mathrm dx\mathrm dy=u_\ell^{(0)},\qquad \ell=1,\dots,9,\\
&\frac{1}{\Delta x\Delta y}\int_{I_\ell}Q(x,y)\frac{x-x_\ell}{\Delta x}\,\mathrm dx\mathrm dy=u_\ell^{(1)},\qquad \ell\in\{1,3,4,5,6,7,9\},\\
&\frac{1}{\Delta x\Delta y}\int_{I_\ell}Q(x,y)\frac{y-y_\ell}{\Delta y}\,\mathrm dx\mathrm dy=u_\ell^{(2)},\qquad \ell\in\{1,2,3,5,7,8,9\}.
\end{aligned}
\right.
\end{equation}
The resulting square system determines $Q(x,y)$ uniquely. The final reconstruction polynomials are obtained by projecting $Q$ onto complete polynomial spaces:  $p_{5}(x,y)=\Pi_4 Q(x,y), p_{6}(x,y)=\Pi_5 Q(x,y).$ Therefore, FAM5 uses a complete quartic polynomial, while FAM6 uses a complete quintic polynomial. The incomplete degree-six space is used only at the auxiliary stage to close the square moment system. Detailed coefficients are provided in the supplementary material.

Note that $Q(x,y)$ is not used directly as the final reconstruction polynomial; it serves as an auxiliary polynomial from which the complete-degree reconstructions are extracted by projection.

\begin{proposition}
\label{prop:2d-recon}
\textbf{Cartesian two-dimensional reconstruction exactness and consistency.} Let $\mathcal R_{i,j}^{(k)}u$ denote the polynomial returned by the Cartesian FAM$k$ reconstruction from the exact cell averages and scaled first-order moments on the $3\times3$ patch around $I_{i,j}$.
\begin{enumerate}[label=(\roman*),leftmargin=18pt]
\item The linear systems associated with \eqref{Qxy_quartic} and \eqref{Qxy_quintic} are uniquely solvable on the chosen incomplete polynomial bases.
\item FAM3 and FAM4 reproduce polynomials of total degree at most $2$ and $3$, respectively, while FAM5 and FAM6 reproduce polynomials of total degree at most $4$ and $5$, respectively.
\item If $u\in C^k$ on the patch, then the reconstructed polynomial is $k$th-order consistent in the cell, at the face-center locations used by the OE jumps, and at the Cartesian face quadrature points used by the method. In particular, the face traces and volume terms entering \eqref{eq11:2dSemi} are $k$th-order accurate.
\end{enumerate}
\end{proposition}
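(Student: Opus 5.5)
The argument follows the one-dimensional proof of Proposition~\ref{prop:1d-recon}, with two extra ingredients: showing that the incomplete auxiliary spaces give invertible square systems, and showing that the projections $\Pi_m$ preserve exactness.

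\textbf{Unique solvability (i).} Let $V_{11}$ and $V_{23}$ denote the incomplete quartic and degree-six spaces. Because the stencil and the moment functionals are invariant under the map $x\mapsto x_i+\Delta x\,\xi$, $y\mapsto y_j+\Delta y\,\eta$, the moment matrix in the scaled orthogonal basis $\{\phi^{(\ell)}(\xi,\eta)\}$ is a fixed rational matrix independent of $\Delta x,\Delta y$. It therefore suffices to show that the homogeneous problem has only the trivial solution. Take $Q\in V$ whose listed moments all vanish and show $Q\equiv0$.

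I would first try a structural argument. The nine zero cell averages on the $3\times3$ patch force the tensor-product averaged data to vanish in each direction. Splitting $Q$ by parity in $\xi$ and $\eta$ decouples the system into small blocks, since the stencil is symmetric under $\xi\mapsto-\xi$, $\eta\mapsto-\eta$ and the chosen moment sets are symmetric as well. Each block can then be checked to be nonsingular. If the parity reduction leaves a block with no transparent argument, I fall back on the explicit inverse given in the supplementary material: it expresses every coefficient as a linear combination of the moments, which is equivalent to invertibility.

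The main obstacle is this step for the $23\times23$ system. There the selection of $x$-moments (cells $1,3,4,5,6,7,9$) and $y$-moments (cells $1,2,3,5,7,8,9$) must be matched with the particular incomplete monomials retained. The parity/symmetry block decomposition is what I expect to make the check tractable.

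\textbf{Exactness (ii).} First I verify the inclusions $\mathbb P_3\subset V_{11}$ and $\mathbb P_5\subset V_{23}$. The dimension counts, $11=10+1$ and $23=21+2$, confirm that the incomplete spaces consist of the complete space plus extra high-degree terms.

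Now let $q\in\mathbb P_{k-1}$. Its exact stencil moments satisfy the constraints \eqref{Qxy_quartic} or \eqref{Qxy_quintic} with $Q=q$, so uniqueness from (i) gives $Q=q$. Since $q\in\mathbb P_{k-1}$, and $\Pi_{k-1}$ is the orthogonal projection in the orthogonal basis, we get $\Pi_{k-1}Q=q$. Hence FAM$k$ reproduces $\mathbb P_{k-1}$. Exactness of FAM3 on $\mathbb P_2$ follows from exactness on $\mathbb P_3$ in the same way, using $\Pi_2q=q$ for $q\in\mathbb P_2$.

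\textbf{Consistency (iii).} I Taylor-expand $u$ about $(x_i,y_j)$ and write $u=T_{k-1}u+r$, with $|r|\le C h^k$ on the patch. By linearity and (ii), $\mathcal R^{(k)}_{i,j}u-u=\mathcal R^{(k)}_{i,j}r-r$.

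The scaled moments of $r$ are $\mathcal O(h^k)$. The reconstruction operator in scaled variables has a fixed, $h$-independent matrix, and the projection $\Pi_{k-1}$ is a fixed linear map on coefficients. Consequently the reconstructed polynomial of $r$ has all coefficients $\mathcal O(h^k)$, and so its sup norm on the closed cell, including the face centers and Gauss points, is $\mathcal O(h^k)$.

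This gives $k$th-order accuracy of the face traces and of the quadrature values in \eqref{eq11:2dSemi}. These are exactly the hypotheses used in Proposition~\ref{prop:2dOEbasic}(ii).
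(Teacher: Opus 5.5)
Your proposal is correct and follows essentially the same route as the paper. Unique solvability comes from the explicit coefficient formulas; your parity-block reduction is a valid but unfinished optional refinement. Exactness then follows from uniqueness, and the $\mathcal O(h^k)$ consistency from a two-variable Taylor expansion combined with linearity and exactness.

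Your write-up usefully makes explicit two steps the paper leaves implicit. First, $\mathbb P_{k-1}$ is contained in the incomplete auxiliary space, which should be read off from the basis list rather than the dimension count; this is what lets uniqueness give $Q=q$ and then $\Pi_{k-1}Q=q$. Second, the scaled moment-to-coefficient map is $h$-independent, which supplies the uniform bound on $\mathcal R_{i,j}^{(k)}r$ needed in the Taylor argument.
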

\begin{proof}
The supplementary material gives explicit coefficient formulas for the incomplete quartic and incomplete sixth-degree ansatzes, so the corresponding square systems are uniquely solvable. Exactness is proved as in Proposition~\ref{prop:1d-recon}: if $q$ already lies in the target polynomial space, then its exact patch moments satisfy the defining constraints, and uniqueness forces the recovered polynomial to equal $q$. Since the operator is linear, local, and exact on all polynomials through degree $k-1$, a two-variable Taylor expansion about $(x_i,y_j)$ gives the stated $\mathcal O(h^k)$ consistency, with $h\sim \max\{\Delta x,\Delta y\}$ on the uniform Cartesian meshes.
\end{proof}

\section{Fourier analysis for the linear-advection model problem}\label{sec:FAM_SaE}

To isolate the behavior of the underlying linear reconstruction, this section studies the one-dimensional FAM schemes \emph{without} the OE step. We consider the linear advection equation
\begin{equation}\label{eq:1dlinear}
\partial_t u+\partial_x(au)=0,
\qquad
x\in[0,2\pi],\quad t>0,
\qquad
u(x,0)=\hat u_0e^{\mathrm i\omega x},
\end{equation}
with constant $a>0$ and periodic boundary conditions.

With upwind flux, the semi-discrete FAM$k$ scheme can be written as
\begin{equation}\label{eq:linear-semi}
\frac{\mathrm d\bm u_i(t)}{\mathrm dt}
=\frac{a}{\Delta x}
\left(\bm D_{-2}^{(k)}\bm u_{i-2}(t)+\bm D_{-1}^{(k)}\bm u_{i-1}(t)+\bm D_{0}^{(k)}\bm u_i(t)+\bm D_{1}^{(k)}\bm u_{i+1}(t)\right),
\end{equation}
where $\bm u_i=(u_i^{(0)},u_i^{(1)})^\top$ and the constant $2\times2$ matrices $\bm D_{\ell}^{(k)}$ are listed in the supplementary material. Using the Fourier ansatz $\bm u_i(t)=\hat{\bm u}(t)e^{\mathrm i\omega x_i}$ yields
\begin{equation}\label{eq:uhat}
\frac{\mathrm d\hat{\bm u}(t)}{\mathrm dt}=a\widetilde{\bm G}\,\hat{\bm u}(t),
\end{equation}
with symbol
\begin{equation}\label{eq:Gmatrix}
\widetilde{\bm G}
=\frac{1}{\Delta x}
\left(
\bm D_{-2}^{(k)}e^{-2\mathrm i\xi}+\bm D_{-1}^{(k)}e^{-\mathrm i\xi}+\bm D_0^{(k)}+\bm D_1^{(k)}e^{\mathrm i\xi}
\right),
\qquad
\xi=\omega\Delta x\in[0,2\pi].
\end{equation}
We denote by $\hat{\bm u}_{\rm ex}$ the exact vector of the cell average and scaled first-order moment associated with the Fourier mode in \eqref{eq:1dlinear}. With the normalization induced by the ansatz $\bm u_i(t)=\hat{\bm u}(t)e^{\mathrm i\omega x_i}$, it is given by
\[
\hat{\bm u}_{\rm ex}=\hat u_0
\begin{pmatrix}
\dfrac{2\sin(\xi/2)}{\xi}, 
\dfrac{\mathrm i\big(2\sin(\xi/2)-\xi\cos(\xi/2)\big)}{\xi^2}
\end{pmatrix}^\top,
\qquad \xi=\omega\Delta x,
\]
with the continuous value $\hat{\bm u}_{\rm ex}=\hat u_0(1,0)^\top$ at $\xi=0$.

\begin{proposition}
\label{prop:semi}
For $k=3,4,5,6$, the symbol $\widetilde{\bm G}$ is diagonalizable for sufficiently small $\Delta x$ and has two distinct eigenvalues $\zeta_1(\omega,\Delta x)$ and $\zeta_2(\omega,\Delta x)$ such that
\[
\zeta_1=-\mathrm i\omega+\mathcal O(\Delta x^{k+1}),
\qquad
\Re(\zeta_2)\le -\frac{c}{\Delta x}
\]
for some $c>0$ independent of $\Delta x$. Moreover, if $\bm V_1$ and $\bm V_2$ denote the corresponding modal contributions to the solution of \eqref{eq:uhat}, then
\[
\|\bm V_1-\hat{\bm u}_{\rm ex}\|=\mathcal O(\Delta x^k),
\qquad
\|\bm V_2\|=\mathcal O(\Delta x^k)
\]
for any vector norm, where $\hat{\bm u}_{\rm ex}$ is the exact Fourier moment vector associated with the initial mode.
\end{proposition}
\begin{proof}
For each $k$, we expand the characteristic polynomial of $\widetilde{\bm G}$ symbolically in powers of $\Delta x$. The full asymptotic expansions are collected in the supplementary material. Those expansions show that one eigenvalue satisfies $\zeta_1=-\mathrm i\omega+\mathcal O(\Delta x^{k+1})$, while the other satisfies $\Re(\zeta_2)\le-c/\Delta x$ for some $c>0$. Let $\bm r_1$ and $\bm r_2$ be corresponding right eigenvectors, and decompose the exact initial moment vector as
$
\hat{\bm u}_{\rm ex}=\alpha_1\bm r_1+\alpha_2\bm r_2.
$ 
The same symbolic expansions show that $\bm r_1=\hat{\bm u}_{\rm ex}+\mathcal O(\Delta x^k)$, $\alpha_1=1+\mathcal O(\Delta x^k)$, and $\alpha_2=\mathcal O(\Delta x^k)$. Defining $\bm V_1:=\alpha_1\bm r_1$ and $\bm V_2:=\alpha_2\bm r_2$ therefore gives
\[
\|\bm V_1-\hat{\bm u}_{\rm ex}\|=\mathcal O(\Delta x^k),
\qquad
\|\bm V_2\|=\mathcal O(\Delta x^k),
\]
which is the stated modal decomposition.
\end{proof}

The full symbolic expansions of the physical and spurious modes for $k=3,4,5,6$ are collected in the supplementary material.

\begin{proposition}
\label{prop:order}
Under the assumptions of Proposition~\ref{prop:semi}, let $\bm u(T)$ and $\bm u_h(T)$ denote the exact and numerical moment vectors at time $T>0$. Then
\[
\|\bm u(T)-\bm u_h(T)\|\le C_1aT\,\Delta x^k+C_2\Delta x^k,
\]
where $C_1$ and $C_2$ are independent of $\Delta x$. Hence FAM$k$ is $k$th-order accurate in any vector norm.
\end{proposition}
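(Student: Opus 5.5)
We argue directly from the modal decomposition in Proposition~\ref{prop:semi}. Since \eqref{eq:uhat} is a constant-coefficient linear ODE and $\widetilde{\bm G}$ is diagonalizable for small $\Delta x$, the numerical Fourier solution with initial data $\hat{\bm u}_{\rm ex}$ is
\[
\hat{\bm u}_h(T)=\bm V_1e^{a\zeta_1T}+\bm V_2e^{a\zeta_2T},
\]
while the exact moment vector is $\hat{\bm u}(T)=\hat{\bm u}_{\rm ex}e^{-\mathrm i a\omega T}$. (The exact solution $\hat u_0e^{\mathrm i\omega(x-aT)}$ has moments that are the initial moments times this phase.) We then split the error as
\[
\hat{\bm u}(T)-\hat{\bm u}_h(T)
=\hat{\bm u}_{\rm ex}\big(e^{-\mathrm i a\omega T}-e^{a\zeta_1T}\big)
+(\hat{\bm u}_{\rm ex}-\bm V_1)e^{a\zeta_1T}
-\bm V_2e^{a\zeta_2T},
\]
and bound the three terms separately.

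\textbf{First term (phase/dissipation error of the physical mode).} Write $a\zeta_1T=-\mathrm i a\omega T+aT\,\eta$ with $\eta=\mathcal O(\Delta x^{k+1})$. Then
\[
|e^{-\mathrm i a\omega T}-e^{a\zeta_1T}|=|e^{aT\eta}-1|\le aT|\eta|e^{aT|\eta|}.
\]
For $\Delta x$ small this is at most $C\,aT\,\Delta x^{k+1}$, and hence at most $C_1aT\Delta x^k$. We keep the weaker power to match the statement, since for a fixed mesh-resolved wavenumber the constant in $\eta$ carries powers of $\omega$, and also to absorb the conversion between per-mode and grid-norm statements. Since $\|\hat{\bm u}_{\rm ex}\|\le C|\hat u_0|$ uniformly in $\xi$, the first term is $\le C_1aT\Delta x^k$.

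\textbf{Second and third terms (projection errors).} We need $|e^{a\zeta_1T}|$ to be bounded uniformly. Because $\Re\zeta_1=\mathcal O(\Delta x^{k+1})$, we have $|e^{a\zeta_1T}|\le e^{CaT\Delta x^{k+1}}\le 2$ for small $\Delta x$. Proposition~\ref{prop:semi} then gives $\|(\hat{\bm u}_{\rm ex}-\bm V_1)e^{a\zeta_1T}\|=\mathcal O(\Delta x^k)$. For the spurious mode, $\Re\zeta_2\le -c/\Delta x$ gives $|e^{a\zeta_2T}|\le e^{-caT/\Delta x}\le1$, so $\|\bm V_2e^{a\zeta_2T}\|\le\|\bm V_2\|=\mathcal O(\Delta x^k)$. (It is in fact exponentially small for $T>0$, but the crude bound suffices.) These two terms together give $C_2\Delta x^k$, with $C_2$ independent of $T$.

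Combining the three bounds yields $\|\bm u(T)-\bm u_h(T)\|\le C_1aT\Delta x^k+C_2\Delta x^k$. Equivalence of norms on $\mathbb C^2$ extends this to any vector norm.

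\textbf{Main obstacle.} The delicate point is that every bound uses only the leading orders from Proposition~\ref{prop:semi}, and the constants hidden in its $\mathcal O(\cdot)$ symbols must be uniform once $\omega$ is fixed and $\Delta x\to0$. In particular, the remainder $\eta$ in $\zeta_1$ must be controlled so that $e^{aT\eta}$ does not grow. I would handle this by taking the symbolic expansions from the supplementary material at fixed $\omega$. That gives $|\eta|\le C\omega^{k+2}\Delta x^{k+1}$ for $\Delta x\le\Delta x_0$, and restricting to $\Delta x$ with $aT|\eta|\le1$ makes every estimate explicit. If the eigenvector normalization in Proposition~\ref{prop:semi} is only asymptotic, I would also check that $\alpha_1,\alpha_2$ remain bounded, which follows from the eigenvalues staying separated, since $|\zeta_1-\zeta_2|\ge c/\Delta x$.
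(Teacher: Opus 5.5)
Your proof is correct and follows essentially the same route as the paper: a three-term split into the physical-mode phase error, the initial projection error $\hat{\bm u}_{\rm ex}-\bm V_1$, and the damped spurious mode $\bm V_2e^{a\zeta_2T}$. The only difference is cosmetic. You attach the phase difference to $\hat{\bm u}_{\rm ex}$ and the numerical phase $e^{a\zeta_1T}$ to the projection error, which is why you need $|e^{a\zeta_1T}|\le 2$. The paper instead attaches the phase difference to $\bm V_1$ and the unit-modulus exact phase $e^{-\mathrm i a\omega T}$ to the projection error.
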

\begin{proof}
Let $\hat{\bm u}_h(t)$ denote the numerical Fourier moment vector. By Proposition~\ref{prop:semi},
\[
\hat{\bm u}_h(t)=e^{a\zeta_1 t}\bm V_1+e^{a\zeta_2 t}\bm V_2.
\]
The exact Fourier moment vector evolves as $\hat{\bm u}(t)=e^{-\mathrm i a\omega t}\hat{\bm u}_{\rm ex}$. Hence
\[
\hat{\bm u}(t)-\hat{\bm u}_h(t)
=\big(e^{-\mathrm i a\omega t}-e^{a\zeta_1 t}\big)\bm V_1
+e^{-\mathrm i a\omega t}(\hat{\bm u}_{\rm ex}-\bm V_1)
-e^{a\zeta_2 t}\bm V_2.
\]
Because $\zeta_1+\mathrm i\omega=\mathcal O(\Delta x^{k+1})$, the first term is bounded by $C_1 a t\,\Delta x^k$ for $0\le t\le T$. The second term is $\mathcal O(\Delta x^k)$ by Proposition~\ref{prop:semi}. For the third term, $\Re(\zeta_2)\le-c/\Delta x$ and $\|\bm V_2\|=\mathcal O(\Delta x^k)$ imply
\[
\|e^{a\zeta_2 t}\bm V_2\|\le C_2\Delta x^k e^{-ca t/\Delta x}\le C_2\Delta x^k.
\]
Summing the three bounds at $t=T$ yields the stated estimate.
\end{proof}

\begin{proposition}
\label{prop:superavg}
\textbf{Cell-average superconvergence of the linear backbone.} Under the assumptions of Proposition~\ref{prop:semi}, if $\bm u(T)-\bm u_h(T)=(e^{(0)}(T),e^{(1)}(T))^\top$, then
\[
|e^{(0)}(T)|\le C_0\Delta x^{k+1},
\qquad
|e^{(1)}(T)|\le C_1\Delta x^k,
\]
for constants $C_0$ and $C_1$ independent of $\Delta x$.
\end{proposition}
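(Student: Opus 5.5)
The plan is to reuse the three-term splitting from the proof of Proposition~\ref{prop:order}, but to track the first (cell-average) component of each term separately. That is,
\[
e^{(0)}(T)=\big(e^{-\mathrm i a\omega T}-e^{a\zeta_1 T}\big)V_1^{(0)}+e^{-\mathrm i a\omega T}\big(\hat u^{(0)}_{\rm ex}-V_1^{(0)}\big)-e^{a\zeta_2 T}V_2^{(0)} .
\]
The first term is already fine. Since $\zeta_1+\mathrm i\omega=\mathcal O(\Delta x^{k+1})$ by Proposition~\ref{prop:semi} and $V_1^{(0)}=\mathcal O(1)$, we get $|e^{-\mathrm i a\omega T}-e^{a\zeta_1T}|\le aT|\zeta_1+\mathrm i\omega|=\mathcal O(\Delta x^{k+1})$. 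Note that this is sharper than what was used in Proposition~\ref{prop:order}, where only $\Delta x^k$ was needed.

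The difficulty is that the remaining two terms are only $\mathcal O(\Delta x^k)$ under Proposition~\ref{prop:semi}. I will therefore refine the eigenvector data in the first component. I will normalize $\bm r_1$ so that its first component equals $\hat u^{(0)}_{\rm ex}$ exactly; this is possible because the leading term of $\bm r_1^{(0)}$ is $\hat u_0\ne0$. With this normalization, $\bm r_1-\hat{\bm u}_{\rm ex}=(0,\,r_1^{(1)}-\hat u^{(1)}_{\rm ex})^\top$, and the second entry is $\mathcal O(\Delta x^k)$.

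Next I solve $\hat{\bm u}_{\rm ex}=\alpha_1\bm r_1+\alpha_2\bm r_2$. The first component gives $(\alpha_1-1)\hat u^{(0)}_{\rm ex}=-\alpha_2 r_2^{(0)}$. The key claim is that the spurious eigenvector is, to leading order, a pure moment mode. That is, with $r_2^{(1)}=1$, one has $r_2^{(0)}=\mathcal O(\Delta x)$. I will read this off from the symbolic expansions of $\widetilde{\bm G}$ in the supplementary material. Heuristically, the spurious mode corresponds to a zero-average slope oscillation, and the consistency of the cell-average update kills its $\mathcal O(1)$ average component.

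Given this claim, the bounds follow. From $\alpha_2=\mathcal O(\Delta x^k)$ we get $\alpha_2 r_2^{(0)}=\mathcal O(\Delta x^{k+1})$, hence $\alpha_1-1=\mathcal O(\Delta x^{k+1})$. Then $V_1^{(0)}-\hat u^{(0)}_{\rm ex}=(\alpha_1-1)\hat u^{(0)}_{\rm ex}=\mathcal O(\Delta x^{k+1})$ and $V_2^{(0)}=\alpha_2r_2^{(0)}=\mathcal O(\Delta x^{k+1})$. The third term is additionally damped by $e^{-caT/\Delta x}$. Summing the three terms yields $|e^{(0)}(T)|\le C_0\Delta x^{k+1}$. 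The bound $|e^{(1)}(T)|\le C_1\Delta x^k$ is simply the second component of Proposition~\ref{prop:order}.

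The main obstacle is the claim $r_2^{(0)}=\mathcal O(\Delta x)$, or more precisely the product bound $\alpha_2r_2^{(0)}=\mathcal O(\Delta x^{k+1})$. I would verify it from the explicit symbol for each $k=3,4,5,6$, using the leading-order matrix $\Delta x\,\widetilde{\bm G}|_{\xi=0}$. This matrix has a zero eigenvalue with eigenvector $(1,0)^\top$, because constants are preserved. The other eigenvector is determined by the second column, and its first entry vanishes if the cell-average row of $\bm D^{(k)}_{\ell}$ annihilates $u^{(1)}$ at $\xi=0$. This holds because the sum of the cell-average flux differences telescopes to zero. If the leading-order eigenvector turns out not to be pure, my fallback is to argue directly from the computed expansions of $\alpha_2$ and $r_2$ in the supplement. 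Those expansions may show that $\alpha_2$ itself is $\mathcal O(\Delta x^{k+1})$, which would give the product bound just as well.
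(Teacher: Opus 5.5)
Your proof is correct. It uses the same skeleton as the paper: the three-term splitting from Proposition~\ref{prop:order}, read in the first component. The difference is in how you get the key componentwise estimate.

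The paper just reads off, from the per-$k$ asymptotic expansions in the appendix, that the first components of $\bm V_1-\hat{\bm u}_{\rm ex}$ and $\bm V_2$ are $\mathcal O(\Delta x^{k+1})$. You derive this from the structure of the scheme. Write $m_{ij}(\xi)$ for the entries of $\Delta x\,\widetilde{\bm G}$. At $\xi=0$ the matrix is $\sum_\ell\bm D_\ell^{(k)}$. Its cell-average row vanishes because the conservative flux difference telescopes, and its $(2,1)$ entry vanishes by constant preservation. So it equals $\operatorname{diag}(0,g_k)$ with $\Re g_k<0$, which follows from $\Re\zeta_2\le -c/\Delta x$. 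The tabulated matrices indeed give $g_k=-6,-60/11,-205/54,-35/9$, matching the leading terms of $\zeta_2$.

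Normalizing $r_2^{(1)}=1$, the eigenvector satisfies $r_2^{(0)}=m_{12}(\xi)/(\Delta x\,\zeta_2-m_{11}(\xi))=\mathcal O(\xi)$. Also $|\alpha_2|=|V_2^{(1)}|\le\|\bm V_2\|=\mathcal O(\Delta x^k)$, because $\bm V_2$ does not depend on how the eigenvectors are normalized. Hence $V_2^{(0)}=\mathcal O(\Delta x^{k+1})$; a quick check for $k=3$ reproduces the appendix value $-\omega^4\Delta x^4/720$.

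What each route buys: your argument exposes the mechanism. Conservation forces the spurious mode to be asymptotically a pure slope mode. It also applies to any conservative, constant-preserving scheme satisfying Proposition~\ref{prop:semi}, using only the $\xi=0$ symbol. The paper's route needs the full symbolic expansion for each $k$, but in exchange it gives the sharp leading constants.

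Two small remarks. First, since $\hat{\bm u}_{\rm ex}=\bm V_1+\bm V_2$ by definition, $\bm V_1-\hat{\bm u}_{\rm ex}=-\bm V_2$ identically, as the appendix expansions confirm. So renormalizing $\bm r_1$ is unnecessary: the second and third terms are both controlled by $V_2^{(0)}$. Second, your fallback would not have worked. With $r_2^{(1)}=1$, $\alpha_2=V_2^{(1)}$ has a nonzero $\Delta x^k$ leading term for every $k$. It is the $\mathcal O(\Delta x)$ size of $r_2^{(0)}$ that does the work, and your main argument supplies exactly that.
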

\begin{proof}
The componentwise expansions collected in the supplementary material show that, for $k=3,4,5,6$, the first components of both $\bm V_1-\hat{\bm u}_{\rm ex}$ and $\bm V_2$ are $\mathcal O(\Delta x^{k+1})$, whereas their second components are $\mathcal O(\Delta x^k)$. Inserting these componentwise estimates into the proof of Proposition~\ref{prop:order} yields the stated orders for the two entries of the moment vector.
\end{proof}

\begin{proposition}
\label{prop:CFL}
Combining \eqref{eq:linear-semi} with the third-order SSP-RK discretization and sampling $10^4$ uniformly spaced wave numbers in $\xi\in[0,2\pi]$ yields the following sampled admissible CFL estimates for the fully discrete linear-backbone FAM schemes without OE:
\[
\begin{aligned}
&0<C_{\rm CFL}\le 0.40\ (k=3),
\qquad
0<C_{\rm CFL}\le 0.44\ (k=4),
\\&
0<C_{\rm CFL}\le 0.58\ (k=5),
\qquad
0<C_{\rm CFL}\le 0.56\ (k=6),
\end{aligned}
\]
where $C_{\rm CFL}=a\Delta t/\Delta x$.
\end{proposition}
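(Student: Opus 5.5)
The plan is to reduce the fully discrete scheme to a one-step amplification matrix per wave number and then check its spectral radius numerically. SSP-RK3 is a linear time integrator, so for the constant-coefficient problem \eqref{eq:uhat} one step maps $\hat{\bm u}^n\mapsto \bm M\hat{\bm u}^n$ with
\[
\bm M(\xi,\nu)=\bm I+\nu\bm Z+\tfrac12\nu^2\bm Z^2+\tfrac16\nu^3\bm Z^3,
\qquad \bm Z:=\Delta x\,\widetilde{\bm G}(\xi),\quad \nu=C_{\rm CFL}=a\Delta t/\Delta x.
\]
Here $\bm Z$ depends only on $\xi$ and on the fixed matrices $\bm D_\ell^{(k)}$ from \eqref{eq:linear-semi}, not on $\Delta x$. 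This holds because the Shu--Osher form \eqref{eq12:RK} (equivalently \eqref{eq3:1dRK}) with the standard SSP-RK3 coefficients collapses to the degree-three Taylor polynomial of the exponential.

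The admissibility criterion will be that $\rho(\bm M(\xi,\nu))\le 1$ for all sampled $\xi\in[0,2\pi]$, i.e.\ von Neumann stability. Since $\bm M$ is a polynomial in $\bm Z$, its eigenvalues are $g(\nu\lambda)$, where $g(z)=1+z+z^2/2+z^3/6$ and $\lambda\in\{\lambda_1(\xi),\lambda_2(\xi)\}$ are the eigenvalues of $\bm Z$. The condition therefore becomes the geometric requirement that $\nu\lambda_j(\xi)$ lie in the SSP-RK3 stability region $\{|g(z)|\le1\}$ for $j=1,2$ and every sampled $\xi$.

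The computation proceeds in four steps.
\begin{enumerate}
\item For each $k$, assemble $\bm Z(\xi)$ on $10^4$ uniformly spaced $\xi$ values.
\item Compute the two eigenvalues of each $\bm Z(\xi)$ in closed form, using the trace and determinant of the $2\times2$ matrix.
\item For each $\xi$, find the largest $\nu$ such that $|g(\nu\lambda_j(\xi))|\le1$ by bisection along the ray $\nu\mapsto\nu\lambda_j$.
\item Take the minimum over $\xi$ and $j$ to obtain the thresholds $0.40,0.44,0.58,0.56$.
\end{enumerate}
Proposition~\ref{prop:semi} guarantees that $\Re\lambda_j\le0$ asymptotically, so the rays enter the left half-plane. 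Because the stability region is not star-shaped with respect to the imaginary axis, I will also confirm by a direct sweep that the admissible set is an interval $(0,\nu_{\max}]$, rather than just the first exit point along each ray.

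I expect the main obstacle to be near $\xi=0$. There the physical eigenvalue satisfies $\nu\lambda_1\approx-\mathrm i\nu\xi$, and $|g(\mathrm i y)|^2=1-y^4/12+y^6/36$, which is $\le1$ only for small $|y|$. So the constraint at small $\xi$ depends on the $\mathcal O(\xi^{k+1})$ dissipative part of $\lambda_1$ competing with the quartic term of $g$. Close to $0$, the sampled check must be supplemented by the expansion from Proposition~\ref{prop:semi}, which shows $|g(\nu\lambda_1)|\le1$ for small $\xi$ once $\nu\le\sqrt3$. The binding constraint is then expected to occur at moderate or high $\xi$, typically through the spurious eigenvalue $\lambda_2$ with its $\mathcal O(1)$ negative real part, and that is where the sampled minimum will be located.

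Finally, I will stress that the result is a sampled estimate, as the statement itself says. It is rigorous only at the grid points, so I would state it as such rather than claim a continuum bound.
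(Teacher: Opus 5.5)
Your proposal matches the paper's argument. The paper likewise takes the SSP-RK3 amplification matrix $\bm I+\bm Z+\frac12\bm Z^2+\frac16\bm Z^3$ with $\bm Z=\frac{a\Delta t}{\Delta x}\widehat{\bm G}$, where $\widehat{\bm G}=\Delta x\widetilde{\bm G}$ is your $\bm Z$, and simply reports the CFL numbers for which $\rho\le 1$ at the $10^4$ sampled wave numbers; your eigenvalue-through-$g$ formulation, ray bisection and interval sweep are a more explicit implementation of that same sampled computation. One small correction to your remark near $\xi=0$: Proposition~\ref{prop:semi} gives $\lambda_1+\mathrm i\xi=\mathcal O(\xi^{k+2})$, not $\mathcal O(\xi^{k+1})$, and the leading correction is purely dispersive for $k=3,5$, which only strengthens your point that the RK3 damping $-(\nu\xi)^4/12$ dominates there for any fixed $\nu$.
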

\begin{proof}
The SSP-RK3 amplification matrix is the standard third-order stability polynomial applied to the semi-discrete symbol:
\begin{equation}\label{eq:Gmatrix2}
\bm G
=\bm I+\bm Z+\frac12\bm Z^2+\frac16\bm Z^3,
\qquad
\bm Z:=\frac{a\Delta t}{\Delta x}\widehat{\bm G},
\end{equation}
where $\widehat{\bm G}=\Delta x\widetilde{\bm G}$. For this sampled study, we estimate admissible CFL numbers by requiring $\rho(\bm G)\le1$ for the sampled values of $\xi$. The resulting sampled admissible CFL estimates are stated above and are not claimed to be exact optimized CFL bounds.
\end{proof}

The sampled spectral-radius curves used to obtain these CFL estimates are reported in the supplementary material.

\begin{remark}
The analysis in this section is restricted to the linear-advection model problem and to the underlying schemes without OE. It explains the accuracy and sampled CFL estimates for the linear backbone of FAM, but it is not a nonlinear stability theory for the fully corrected schemes or for general systems.
\end{remark}

\section{Numerical results}\label{sec:NumTest}

This section tests whether the fixed $\mathbb P^1$ moment representation retains the designed smooth order, how the end-of-step OE correction behaves on representative discontinuous problems, and how runtime is distributed in Euler comparisons. All computations use uniform Cartesian meshes and SSP-RK3; the test set includes Burgers' equation, a nonconvex scalar law, and the compressible Euler equations with $\gamma=1.4$ unless otherwise stated.

\paragraph{Time stepping} For smooth problems we choose $\Delta t=C_{\rm CFL}\Delta x^{k/3}/\alpha_x$ in 1D and $\Delta t=C_{\rm CFL}/(\alpha_x/\Delta x^{k/3}+\alpha_y/\Delta y^{k/3})$ in 2D, and we checked that further reducing $\Delta t$ on the listed meshes does not change the observed orders. Here $\alpha_x$ and $\alpha_y$ denote the maximum Rusanov wave-speed estimates over the mesh in the corresponding coordinate directions. For discontinuous problems we use $\Delta t=C_{\rm CFL}\Delta x/\alpha_x$ in 1D and $\Delta t=C_{\rm CFL}/(\alpha_x/\Delta x+\alpha_y/\Delta y)$ in 2D. Using Proposition~\ref{prop:CFL} as a sampled linear guide, the FAM discontinuous tests use $C_{\rm CFL}=0.4$ for FAM3--FAM4 and $0.55$ for FAM5--FAM6; the comparison methods use the values reported in their original papers ($0.6$ for MR-WENO3/MR-WENO5/HWENO-U and $0.45$ for OE-HWENO). Smooth tests use $C_{\rm CFL}=0.1$ for all methods. 

\paragraph{Comparison framework} The comparison set consists of FAM$k$ ($k=3,4,5,6$), MR-WENO3, MR-WENO5 \cite{ZS_MRWENO}, HWENO-U \cite{FQZ}, and OE-HWENO \cite{FanWu}. All simulations use double-precision Fortran~95 on an Intel(R) Xeon(R) CPU E5-2640 v4 @~2.40~GHz. FAM stores two moments per conserved component per cell in 1D and three in 2D. Reported wall-clock times are complete-run times within the same code base, including reconstruction, flux evaluation, OE work, and optional BP/PP limiting. The HWENO-type Euler baselines follow the local characteristic-variable implementations. The emphasized matched-order comparisons are FAM3 versus MR-WENO3, FAM5 versus MR-WENO5 and HWENO-U, and FAM6 versus OE-HWENO.

The discussion proceeds from smooth accuracy and selected matched-order cost comparisons to discontinuous-problem behavior, OE ablation, and runtime attribution. Table~\ref{tab:test-config} summarizes the benchmark-specific OE choice, BP/PP setting, and main role of each test; additional sensitivity and ablation figures are in the supplementary material.

\begin{table}[t]
	\centering
	\caption{Configuration summary for the benchmarks reported in the main paper.}
	\label{tab:test-config}
	\scriptsize
	\setlength{\tabcolsep}{4pt}
	\renewcommand{\arraystretch}{1.08}
	\begin{tabular}{@{}p{0.75cm}p{2.15cm}p{2.55cm}p{0.9cm}p{0.95cm}p{3.45cm}@{}}
		\toprule[1.2pt]
		Ex. & Problem & Role & OE & BP/PP & Main features \\
		\midrule
		4.1 & smooth Burgers & 1D scalar accuracy / cost & II & off & designed order and matched-order cost \\
		4.2 & smooth Euler (1D/2D) & system accuracy / cost & II & off & smooth-system accuracy with componentwise variables \\
		4.3 & Buckley--Leverett & nonconvex scalar benchmark & II & on & compound-wave resolution \\
		4.4 & Shu--Osher & 1D oscillatory shock interaction & II & off & shock-wave resolution; main OE-ablation benchmark \\
		4.5 & Lax shock tube & moderate Euler Riemann problem & II & off & baseline shock-capturing behavior \\
		4.6 & multiscale Lax & coefficient comparison & I / II & on & Type~II is preferable on multiscale flows \\
		4.7 & Sedov / LeBlanc & extreme 1D Euler tests & II & on & strong-shock behavior and complete-run wall-clock cost \\
		4.8 & double Mach reflection & 2D strong-shock benchmark & II & on & 2D shock-capturing behavior and runtime attribution \\
		\bottomrule[1.2pt]
	\end{tabular}
\end{table}

\subsection{Smooth accuracy and matched-order cost}

\begin{exmp}
\label{ex1:Burgers}
\textbf{Burgers equation.} We first consider the one-dimensional Burgers equation with smooth periodic data $u(x,0)=0.5+\sin(\pi x)$ on $[0,2]$. The final time is $T=0.5/\pi$, so the exact solution remains smooth. Table~\ref{tab:1dBur} reports the $L^1$ errors of the cell averages, while the supplementary material records the corresponding errors of the scaled first-order moments. Figure~\ref{fig:1dBur} plots the error against wall-clock time.

We see that the FAM$k$ schemes attain the expected orders: cell averages display a $(k+1)$th-order pattern consistent with the semi-discrete linear-backbone superconvergence mechanism in Proposition~\ref{prop:superavg}; the time-step refinement checks indicate that the fully discrete errors are not dominated by RK temporal error. While evolved scaled first-order moments converge with order $k$ (see the supplementary table). In the reported matched-order error-time curves, FAM5 gives smaller errors than MR-WENO5 and HWENO-U at comparable runtime in the plotted range, while FAM6 has comparable error-time behavior to OE-HWENO. For the cell-average superconvergence tables we also repeated the runs with smaller $\Delta t$, and observed unchanged orders.

\begin{figure}[t]
\centering
\includegraphics[width=0.86\textwidth]{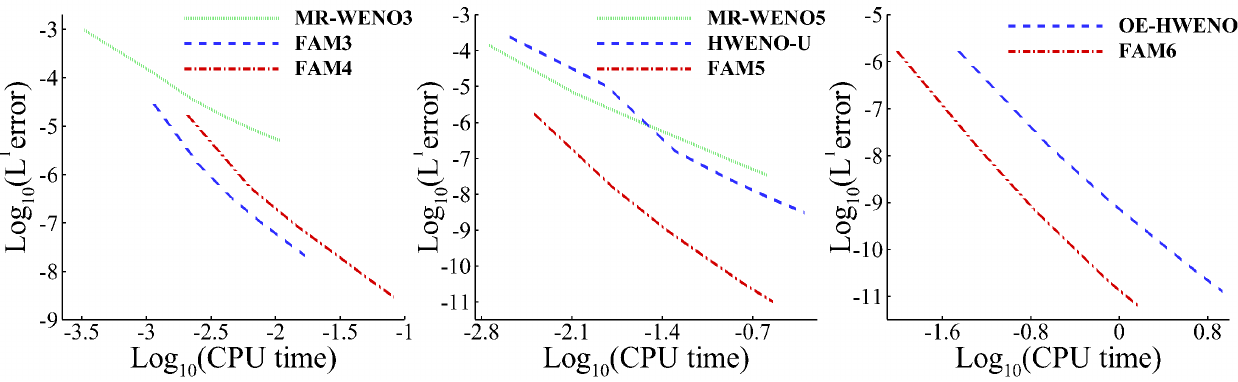}
\caption{$L^1$ error versus wall-clock time for the smooth Burgers test in Example~\ref{ex1:Burgers}.}
\label{fig:1dBur}
\end{figure}

\begin{table}[t]
\centering
\caption{$L^1$ errors of the cell averages for the smooth Burgers test in Example~\ref{ex1:Burgers}.}
\label{tab:1dBur}
\scriptsize
\resizebox{\textwidth}{!}{
\begin{tabular}{c|cccccccc}
\toprule[1.2pt]
$N_x$ & \multicolumn{2}{c}{FAM3} & \multicolumn{2}{c}{FAM4} & \multicolumn{2}{c}{FAM5} & \multicolumn{2}{c}{FAM6} \\
\cmidrule(l){2-3}\cmidrule(l){4-5}\cmidrule(l){6-7}\cmidrule(l){8-9}
& $L^1$ error & order & $L^1$ error & order & $L^1$ error & order & $L^1$ error & order \\
\midrule
30 & 2.10E-05 & --   & 1.67E-05 & --   & 1.73E-06 & --   & 1.64E-06 & --   \\
60 & 1.45E-06 & 3.86 & 5.28E-07 & 4.98 & 1.65E-08 & 6.71 & 1.40E-08 & 6.88 \\
90 & 2.95E-07 & 3.93 & 8.09E-08 & 4.63 & 9.88E-10 & 6.94 & 7.29E-10 & 7.28 \\
120& 9.68E-08 & 3.87 & 2.11E-08 & 4.68 & 1.41E-10 & 6.77 & 9.88E-11 & 6.95 \\
150& 4.07E-08 & 3.89 & 7.09E-09 & 4.88 & 3.17E-11 & 6.68 & 2.08E-11 & 6.99 \\
180& 1.99E-08 & 3.92 & 2.98E-09 & 4.76 & 9.80E-12 & 6.44 & 6.04E-12 & 6.78 \\
\midrule
$N_x$ & \multicolumn{2}{c}{MR-WENO3} & \multicolumn{2}{c}{MR-WENO5} & \multicolumn{2}{c}{HWENO-U} & \multicolumn{2}{c}{OE-HWENO} \\
\cmidrule(l){2-3}\cmidrule(l){4-5}\cmidrule(l){6-7}\cmidrule(l){8-9}
& $L^1$ error & order & $L^1$ error & order & $L^1$ error & order & $L^1$ error & order \\
\midrule
30 & 9.10E-04 & --   & 1.41E-04 & --   & 1.38E-04 & --   & 1.67E-06 & --   \\
60 & 1.22E-04 & 2.89 & 6.66E-06 & 4.40 & 2.66E-06 & 5.70 & 1.47E-08 & 6.83 \\
90 & 3.81E-05 & 2.88 & 1.01E-06 & 4.65 & 1.63E-07 & 6.88 & 1.04E-09 & 6.53 \\
120& 1.63E-05 & 2.95 & 2.51E-07 & 4.84 & 2.81E-08 & 6.11 & 1.65E-10 & 6.40 \\
150& 8.37E-06 & 2.98 & 8.40E-08 & 4.91 & 7.96E-09 & 5.65 & 3.97E-11 & 6.37 \\
180& 4.85E-06 & 3.00 & 3.40E-08 & 4.97 & 3.10E-09 & 5.17 & 1.21E-11 & 6.54 \\
\bottomrule[1.2pt]
\end{tabular}}
\end{table}
\end{exmp}

\begin{exmp}
\label{ex:Order_Euler}
\textbf{Euler equations.}
We next test the schemes on smooth Euler solutions. In one dimension we use $(\rho_0,\mu_0,p_0)=(1+0.2\sin(\pi x),1,1)$ on $[0,2]$ with periodic boundaries. In two dimensions we use $(\rho_0,\mu_0,\nu_0,p_0)=(1+0.2\sin(\pi(x+y)),1,1,1)$ on $[0,2]^2$, again with periodic boundaries. All runs stop at $T=2$, so the exact solution is a simple translation of the initial wave.

Tables~\ref{tab:1dEuler} and~\ref{tab:2dEuler} report $L^1$ errors of the density cell averages. The FAM$k$ schemes achieve the expected convergence: density cell averages show an empirical $(k+1)$th-order pattern, while the supplementary material records the corresponding one-dimensional moment-error tables; the two-dimensional moment data show the same behavior and are omitted for brevity. Figure~\ref{fig:1dEuler} compares 1D density errors against runtime. 

Since FAM5/HWENO-U and FAM6/OE-HWENO are same-$\mathbb P^1$-state comparisons at the corresponding accuracy levels, they mainly probe the stagewise reconstruction design rather than the number of evolved local DOFs.

\begin{table}[t]
\centering
\caption{$L^1$ errors of density cell averages for the 1D smooth Euler test in Example~\ref{ex:Order_Euler}.}
\label{tab:1dEuler}
\scriptsize
\resizebox{\textwidth}{!}{
\begin{tabular}{c|cccccccc}
\toprule[1.2pt]
$N_x$ & \multicolumn{2}{c}{FAM3} & \multicolumn{2}{c}{FAM4} & \multicolumn{2}{c}{FAM5} & \multicolumn{2}{c}{FAM6} \\
\cmidrule(l){2-3}\cmidrule(l){4-5}\cmidrule(l){6-7}\cmidrule(l){8-9}
& $L^1$ error & order & $L^1$ error & order & $L^1$ error & order & $L^1$ error & order \\
\midrule
20 & 1.18E-05 & --   & 6.86E-06 & --   & 1.68E-08 & --   & 9.84E-09 & --   \\
30 & 2.22E-06 & 4.12 & 9.62E-07 & 4.84 & 1.13E-09 & 6.65 & 5.93E-10 & 6.93 \\
40 & 6.88E-07 & 4.07 & 2.34E-07 & 4.92 & 1.78E-10 & 6.43 & 8.04E-11 & 6.95 \\
50 & 2.80E-07 & 4.03 & 7.80E-08 & 4.92 & 4.34E-11 & 6.33 & 1.71E-11 & 6.94 \\
60 & 1.34E-07 & 4.02 & 3.18E-08 & 4.92 & 1.40E-11 & 6.21 & 4.80E-12 & 6.97 \\
70 & 7.23E-08 & 4.02 & 1.48E-08 & 4.94 & 5.42E-12 & 6.15 & 1.66E-12 & 6.90 \\
\midrule
$N_x$ & \multicolumn{2}{c}{MR-WENO3} & \multicolumn{2}{c}{MR-WENO5} & \multicolumn{2}{c}{HWENO-U} & \multicolumn{2}{c}{OE-HWENO} \\
\cmidrule(l){2-3}\cmidrule(l){4-5}\cmidrule(l){6-7}\cmidrule(l){8-9}
& $L^1$ error & order & $L^1$ error & order & $L^1$ error & order & $L^1$ error & order \\
\midrule
20 & 4.63E-03 & --   & 9.22E-05 & --   & 8.07E-06 & --   & 1.80E-07 & --   \\
30 & 1.40E-03 & 2.94 & 1.23E-05 & 4.96 & 1.05E-06 & 5.00 & 1.19E-08 & 6.70 \\
40 & 5.96E-04 & 2.98 & 2.94E-06 & 4.98 & 2.50E-07 & 5.00 & 1.66E-09 & 6.83 \\
50 & 3.06E-04 & 2.99 & 9.67E-07 & 4.99 & 8.18E-08 & 5.00 & 3.57E-10 & 6.89 \\
60 & 1.77E-04 & 2.99 & 3.89E-07 & 4.99 & 3.29E-08 & 5.00 & 1.01E-10 & 6.91 \\
70 & 1.12E-04 & 2.99 & 1.80E-07 & 5.00 & 1.52E-08 & 5.00 & 3.48E-11 & 6.94 \\
\bottomrule[1.2pt]
\end{tabular}}
\end{table}

\begin{figure}[t]
\centering
\includegraphics[width=0.86\textwidth]{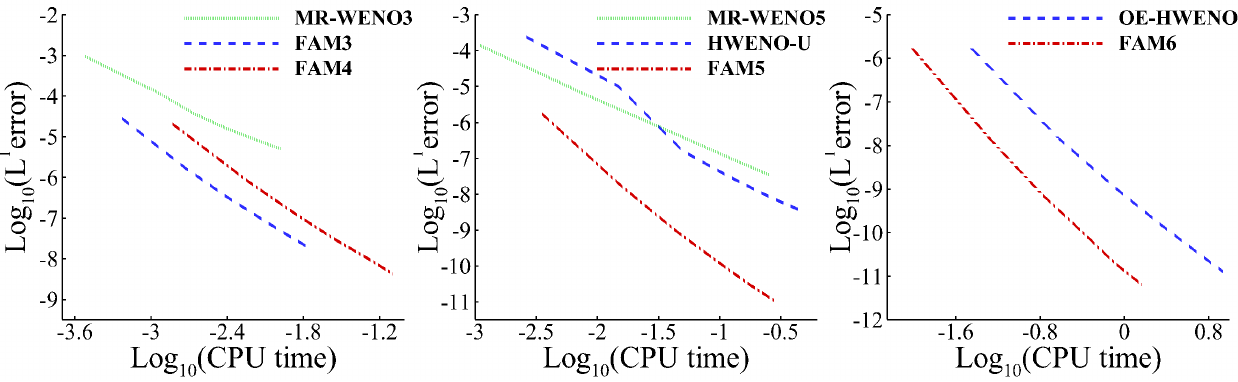}
\caption{$L^1$ error of the density cell averages versus wall-clock time for the 1D smooth Euler test in Example~\ref{ex:Order_Euler}.}
\label{fig:1dEuler}
\end{figure}

\begin{table}[t]
\centering
\caption{$L^1$ errors of density cell averages for the 2D smooth Euler test in Example~\ref{ex:Order_Euler}.}
\label{tab:2dEuler}
\scriptsize
\resizebox{\textwidth}{!}{
\begin{tabular}{c|cccccccc}
\toprule[1.2pt]
$N_x$ & \multicolumn{2}{c}{FAM3} & \multicolumn{2}{c}{FAM4} & \multicolumn{2}{c}{FAM5} & \multicolumn{2}{c}{FAM6} \\
\cmidrule(l){2-3}\cmidrule(l){4-5}\cmidrule(l){6-7}\cmidrule(l){8-9}
& $L^1$ error & order & $L^1$ error & order & $L^1$ error & order & $L^1$ error & order \\
\midrule
20$\times$20  & 3.14E-04 & --   & 1.28E-04 & --   & 8.16E-07 & --   & 9.46E-07 & --   \\
40$\times$40  & 1.64E-05 & 4.26 & 3.85E-06 & 5.05 & 8.13E-09 & 6.65 & 7.36E-09 & 7.01 \\
60$\times$60  & 3.01E-06 & 4.19 & 5.11E-07 & 4.98 & 5.56E-10 & 6.61 & 4.24E-10 & 7.04 \\
80$\times$80  & 9.14E-07 & 4.14 & 1.23E-07 & 4.96 & 8.44E-11 & 6.56 & 5.59E-11 & 7.04 \\
100$\times$100& 3.65E-07 & 4.11 & 4.05E-08 & 4.97 & 1.98E-11 & 6.51 & 1.18E-11 & 6.96 \\
120$\times$120& 1.73E-07 & 4.09 & 1.63E-08 & 4.98 & 6.11E-12 & 6.44 & 3.31E-12 & 6.99 \\
\midrule
$N_x$ & \multicolumn{2}{c}{MR-WENO3} & \multicolumn{2}{c}{MR-WENO5} & \multicolumn{2}{c}{HWENO-U} & \multicolumn{2}{c}{OE-HWENO} \\
\cmidrule(l){2-3}\cmidrule(l){4-5}\cmidrule(l){6-7}\cmidrule(l){8-9}
& $L^1$ error & order & $L^1$ error & order & $L^1$ error & order & $L^1$ error & order \\
\midrule
20$\times$20  & 2.50E-02 & --   & 3.60E-03 & --   & 9.62E-04 & --   & 9.96E-06 & --   \\
40$\times$40  & 3.56E-03 & 2.81 & 1.21E-04 & 4.90 & 3.30E-06 & 8.19 & 1.09E-07 & 6.52 \\
60$\times$60  & 1.07E-03 & 2.96 & 1.62E-05 & 4.96 & 1.39E-07 & 7.81 & 6.72E-09 & 6.87 \\
80$\times$80  & 4.55E-04 & 2.98 & 3.87E-06 & 4.97 & 1.96E-08 & 6.81 & 9.30E-10 & 6.87 \\
100$\times$100& 2.33E-04 & 2.99 & 1.28E-06 & 4.97 & 5.42E-09 & 5.77 & 1.99E-10 & 6.90 \\
120$\times$120& 1.35E-04 & 2.99 & 5.18E-07 & 4.96 & 2.07E-09 & 5.28 & 5.65E-11 & 6.92 \\
\bottomrule[1.2pt]
\end{tabular}}
\end{table}
\end{exmp}

\subsection{Representative discontinuous problems}
We first report the Buckley--Leverett benchmark with the BP/PP setting stated in Table~\ref{tab:test-config}, followed by Shu--Osher and Lax without auxiliary BP/PP limiting; the multiscale and strong-shock problems then activate the standard BP/PP limiter. Unless stated otherwise, Euler examples use componentwise conservative-variable reconstruction; the later variable-space comparison changes this choice only within FAM5.

\begin{exmp}
\label{ex:1dBL}
\textbf{Buckley--Leverett problem.} We examine the nonlinear, nonconvex Buckley--Leverett equation $\partial_t u+\partial_x f(u)=0$ with $f(u)=\frac{4u^2}{4u^2+(1-u)^2}$ and initial condition $u(x,0)=1$ for $x\in[-0.5,0]$ and $u(x,0)=0$ elsewhere. The computation is performed on $[-1,1]$ with periodic boundary conditions up to the final time $T=0.4$. This test features an entropy solution containing both shocks and rarefactions. Figure~\ref{fig:1dBL} shows that the FAM$k$ solutions remain close to the exact entropy solution near the compound wave and are comparable to the well-resolved baseline profiles.

\begin{figure}[t]
\centering
\includegraphics[width=0.86\textwidth]{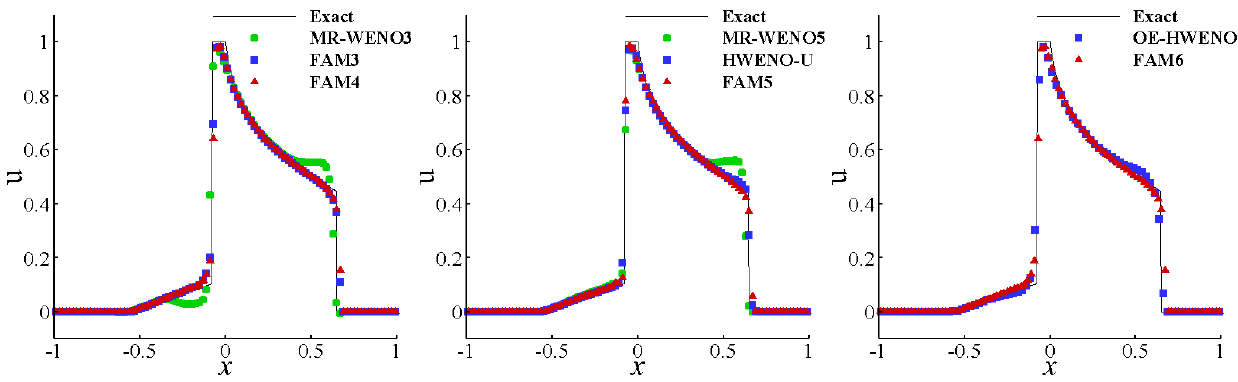}
\caption{Cell averages for the Buckley--Leverett problem in Example~\ref{ex:1dBL} on 100 cells. The black solid curve is the exact entropy solution.}
\label{fig:1dBL}
\end{figure}
\end{exmp}

\begin{exmp}\label{ex:1dShuOsher}
\textbf{Shu--Osher shock--density-wave interaction.} We next consider the interaction of a right-moving Mach~3 shock with sinusoidal density waves. The initial condition is $(\rho_0,\mu_0,p_0)=(3.857143,2.629369,10.333333)$ for $-5\le x<-4$ and $(\rho_0,\mu_0,p_0)=(1+0.2\sin(5x),0,1)$ for $-4<x\le5$. The left boundary is prescribed by the pre-shock state, and the right boundary uses an outflow condition. The final time is $T=1.8$. No auxiliary BP/PP limiter is used in this example.

Figure~\ref{fig:1dShuOsher} shows that the FAM schemes capture the main shock and the post-shock oscillatory structures without visible nonphysical oscillations. The fine-grid reference solution is computed by FAM5 on 60{,}000 cells.

\begin{figure}[t]
	\centering
	\includegraphics[width=0.86\textwidth]{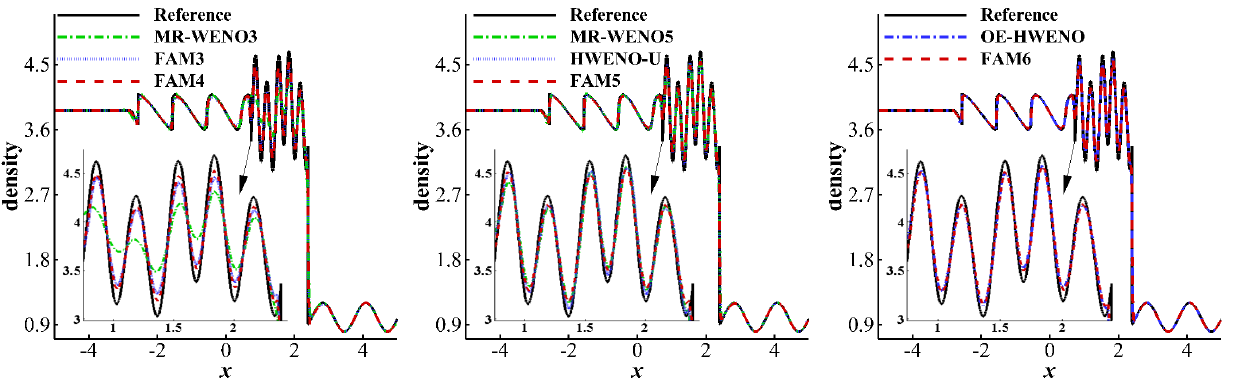}
	\caption{Density for the Shu--Osher problem in Example~\ref{ex:1dShuOsher} on 400 cells.}
	\label{fig:1dShuOsher}
\end{figure}
\end{exmp}

\begin{exmp}\label{ex:1dLax}
\textbf{Lax shock tube.} This classical Riemann problem uses the discontinuous initial data $(\rho_0,\mu_0,p_0)=(0.445,0.698,3.528)$ for $-0.5\le x<0$ and $(0.5,0,0.571)$ for $0<x\le0.5$ with outflow boundary conditions. The final time is $T=0.16$. No auxiliary BP/PP limiter is used in this example. Figure~\ref{fig:1dLax} shows that the FAM$k$ schemes resolve the elementary-wave pattern cleanly and remain comparable with the baseline methods on this standard test.

\begin{figure}[t]
\centering
\includegraphics[width=0.76\textwidth]{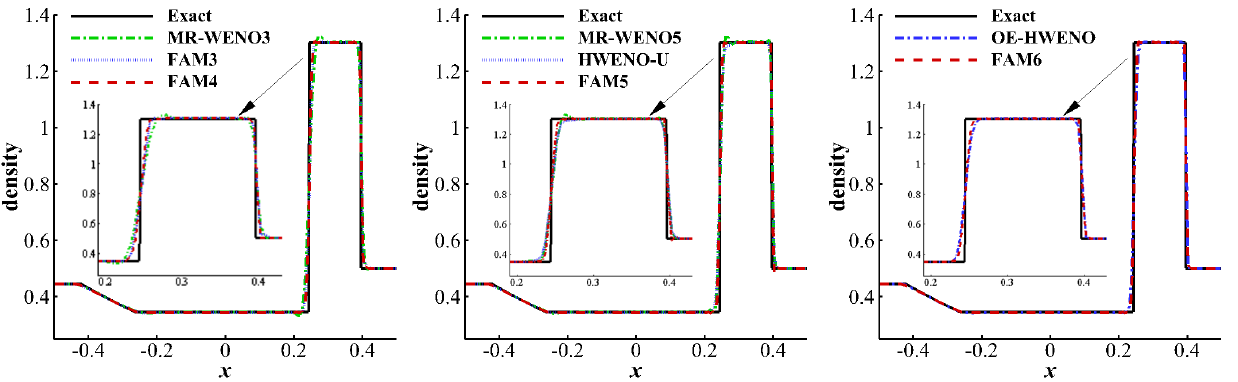}
\caption{Density for the Lax shock-tube problem in Example~\ref{ex:1dLax} on 200 cells.}
\label{fig:1dLax}
\end{figure}
\end{exmp}

\begin{exmp}\label{ex:1dMultiScale}
\textbf{Multiscale Lax problem.} This benchmark, adapted from \cite{CHLW}, is designed to differentiate the two OE coefficients. The initial data combine the standard Lax shock tube with copies scaled by factors $100$ and $0.01$: $(\rho_0,\mu_0,p_0)=(44.5,0.698,352.8)$ for $-15\le x<-10$, $(50,0,57.1)$ for $-10<x\le0$, $(0.00445,0.698,\allowbreak 0.03528)$ for $0\le x<10$, and $(0.005,0,0.00571)$ for $10<x\le15$ with outflow boundary conditions. The final time is $T=1.4$. Here ``FAM$k$-I'' and ``FAM$k$-II'' denote the schemes using the Type~I and Type~II damping coefficients, respectively.

The reference solution plotted in Figure~\ref{fig:1dMultiScale} is computed with the FAM5-II scheme on 64{,}000 cells.
Within that qualitative comparison, FAM3-I and FAM5-I develop visible undershoots in the inset, whereas FAM3-II and FAM5-II substantially reduce them and retain better-resolved small structures. Accordingly, the strongly multiscale Euler tests below use the Type~II coefficient.

\begin{figure}[!htbp]
\centering
\includegraphics[width=0.72\textwidth]{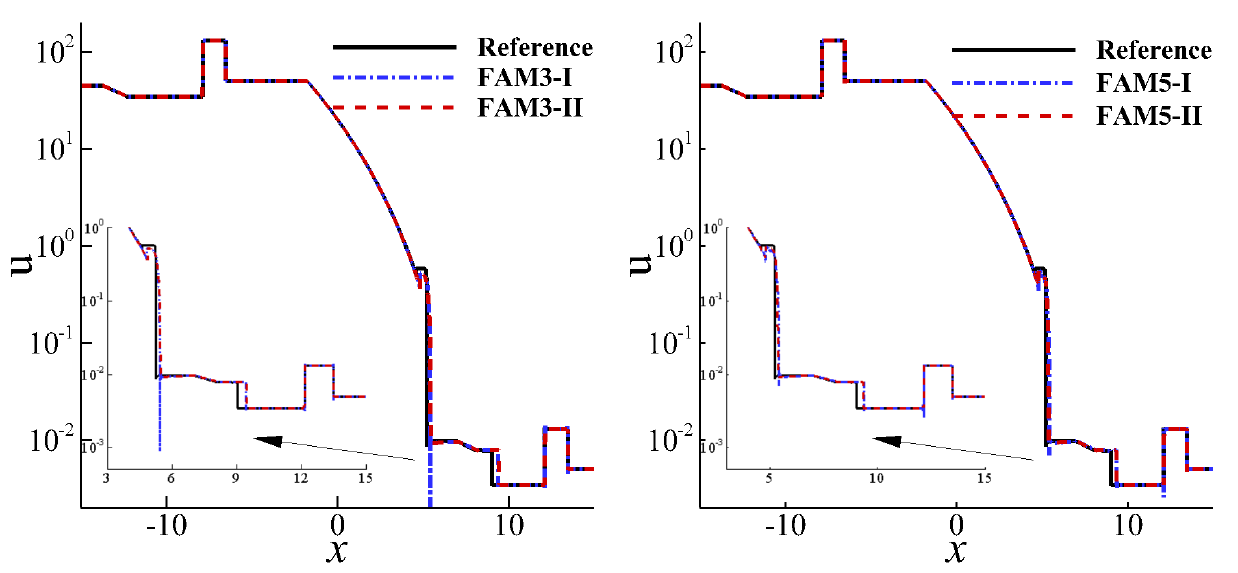}
\caption{Density for the multiscale Lax shock-tube test in Example~\ref{ex:1dMultiScale} on 6400 cells. }
\label{fig:1dMultiScale}
\end{figure}
\end{exmp}

\begin{exmp}\label{ex:1dExtreme}
\textbf{Extreme one-dimensional Euler tests.} We next test the schemes on two demanding one-dimensional Euler problems. The first is a Sedov point-blast problem on $[-2,2]$ with background density $\rho_0=1$, velocity $\mu_0=0$, background total energy $E_0=10^{-12}$, and a single central cell carrying energy $3.2\times10^6/\Delta x$. The final time is $T=10^{-3}$. The second is the LeBlanc-type shock tube on $[-10,10]$ with initial data $(\rho_0,\mu_0,p_0)=(2,0,10^9)$ for $x<0$ and $(10^{-3},0,1)$ for $x>0$, evolved to $T=10^{-4}$ under the adiabatic-index convention stated at the beginning of this section. Outflow boundary conditions are used.

These tests require the standard PP limiter for all methods in the comparison set. Figure~\ref{fig:1dExtreme} shows FAM density profiles on these severe tests, and Table~\ref{tab:1dCPU} gives complete-run wall-clock times. One can see that FAM5 has lower reported times than MR-WENO5 and HWENO-U, and FAM6 has lower reported times than OE-HWENO; the smallest reported ratio is $0.1$ for FAM6/OE-HWENO on LeBlanc under the stated implementation and CFL conventions.

\begin{figure}[!htbp]
\centering
\includegraphics[width=0.86\textwidth]{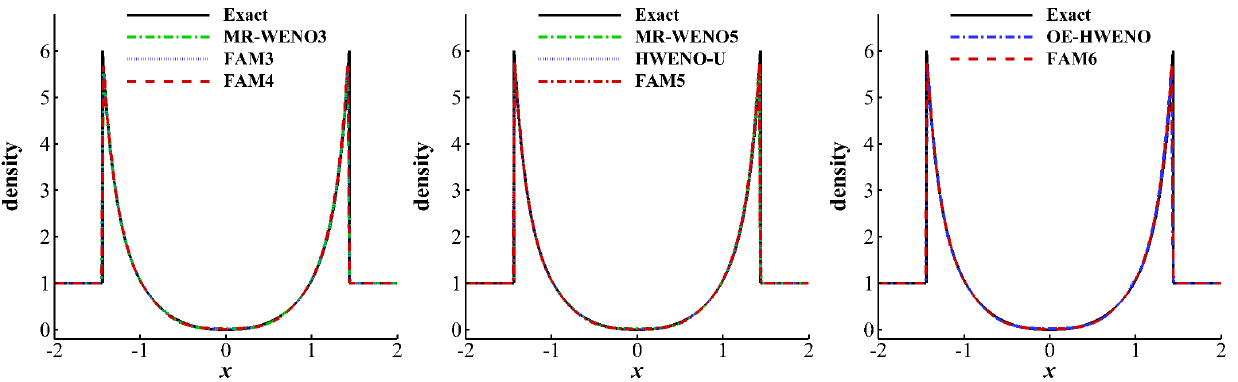}\\[1ex]
\includegraphics[width=0.86\textwidth]{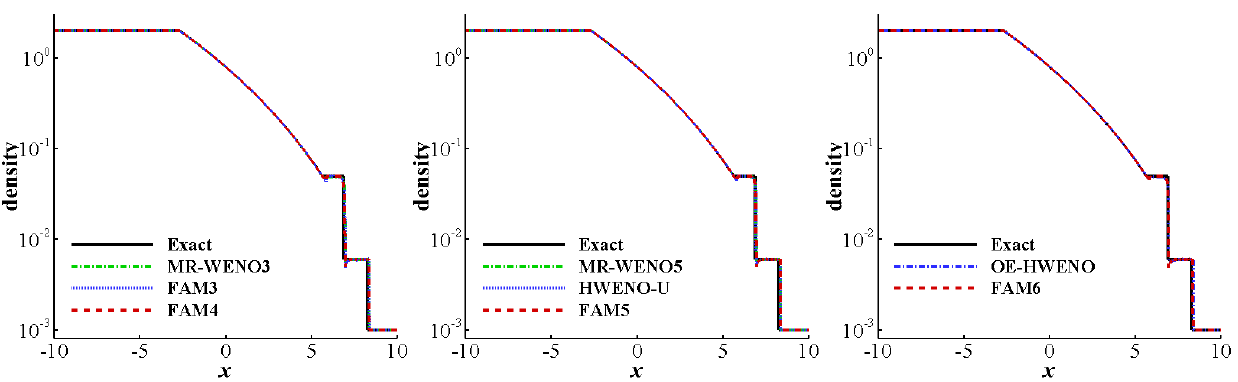}
\caption{Density for the Sedov and LeBlanc problems in Example~\ref{ex:1dExtreme}.}
\label{fig:1dExtreme}
\end{figure}

\begin{table}[!htbp]
\centering
\begin{threeparttable}
\caption{Wall-clock times and ratios for the complete Sedov/LeBlanc runs in Example~\ref{ex:1dExtreme}, including PP limiting. In each time pair, the FAM time is listed first and the baseline time second, both in seconds. All methods use SSP-RK3, so the ratios reflect both per-stage cost and the different numbers of time steps induced by the chosen CFL numbers.}
\label{tab:1dCPU}
\small
\begin{tabular}{lcccc}
\toprule[1.2pt]
Pair & Sedov: time pair (s) & Ratio & LeBlanc: time pair (s) & Ratio \\
\midrule
FAM3 vs MR-WENO3 & $1.48$ vs $1.04$ & $1.42$ & $37.1$ vs $23.0$ & $1.61$ \\
FAM5 vs MR-WENO5 & $0.99$ vs $1.20$ & $0.83$ & $20.1$ vs $31.3$ & $0.64$ \\
FAM5 vs HWENO-U  & $0.99$ vs $1.62$ & $0.61$ & $20.1$ vs $36.4$ & $0.55$ \\
FAM6 vs OE-HWENO & $1.16$ vs $4.53$ & $0.26$ & $19.9$ vs $196.8$ & $0.10$ \\
\bottomrule[1.2pt]
\end{tabular}
\end{threeparttable}
\end{table}
\end{exmp}

\begin{exmp}\label{ex:DoubleMach}
\textbf{Double Mach reflection.} We simulate the classical double Mach reflection problem \cite{WC} on $[0,3]\times[0,1]$. The states ahead of and behind the incident shock are prescribed as $(\rho,\mu_0,\nu_0,p)=(8,8.25\cos(\frac{\pi}{6}),-8.25\sin(\frac{\pi}{6}),116.5)$ behind the shock and $(1.4,0,0,1)$ ahead of the shock. A Mach~10 shock initially intersects the bottom boundary at $(x,y)=(1/6,0)$ and makes a $60^\circ$ angle with the $x$-axis. The left and right boundaries are inflow and outflow, respectively; the top boundary follows the exact shock motion; on the bottom boundary we impose the exact post-shock state on $[0,1/6]\times\{0\}$ and a reflective condition elsewhere. 
Figure~\ref{fig:DoubleMach} shows density contours at $T=0.2$ on a $1440\times480$ mesh. With the PP limiter activated, the FAM solutions capture the main shock structure, slip lines, and double-Mach stem without visible nonphysical oscillations. In the reported same $\mathbb P^1$-state timings, FAM5 has lower reported times than HWENO-U and OE-HWENO; Subsection~\ref{subsec:ablation} gives the runtime attribution.

\begin{figure}[t]
\centering
\includegraphics[width=0.97\textwidth]{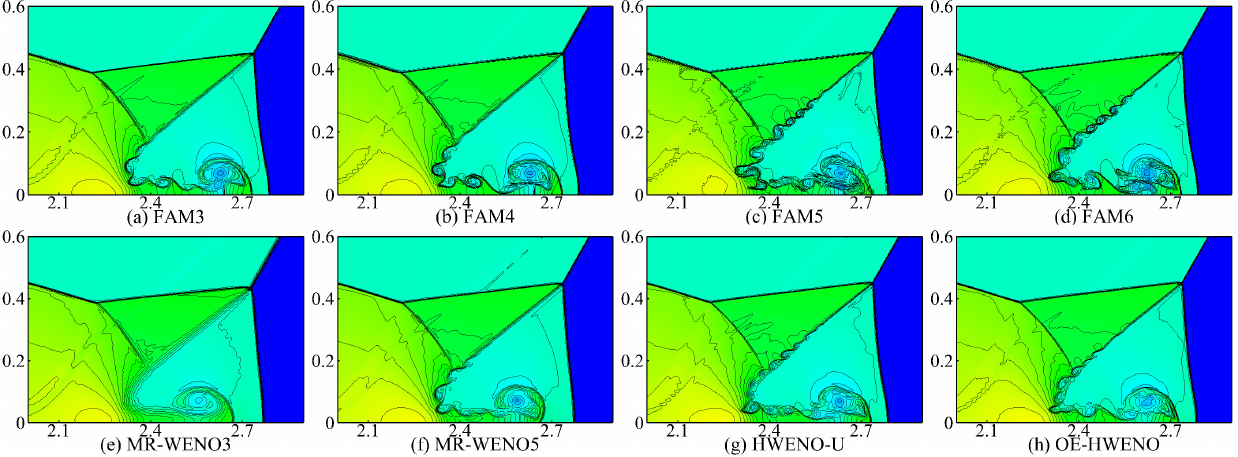}
\caption{30 equally spaced contours of density from  1.5 to 22.0 for the double-Mach-reflection test in Example~\ref{ex:DoubleMach} at $T=0.2$ on a $1440\times480$ mesh. }
\label{fig:DoubleMach}
\end{figure}
\end{exmp}

\subsection{\texorpdfstring{OE ablation and runtime attribution}{OE ablation and runtime attribution}}\label{subsec:ablation}

This subsection examines the end-of-step OE correction and the main runtime contributors on the Shu--Osher and double-Mach benchmarks.

Figure~\ref{fig:1dShuOsherOE} gives the main OE ablation. Without OE, even with the standard PP limiter activated to maintain admissibility in this ablation run, FAM5 exhibits visible nonphysical oscillations; the OE choices substantially reduce them in this plot. The supplementary double-Mach ablation gives the same qualitative conclusion, with Type~II giving slightly better-resolved structures on the more multiscale flow.

\begin{figure}[t]
\centering
\includegraphics[width=0.97\textwidth]{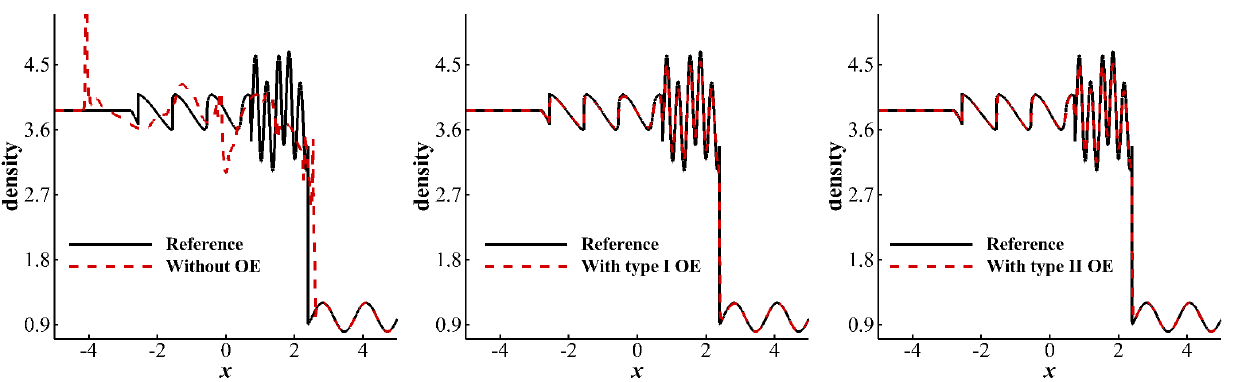}
\caption{FAM5 density for the Shu--Osher problem without OE and with the Type~I/Type~II OE processes on 400 cells. The no-OE computation uses the standard PP limiter to maintain admissibility in this no-OE ablation run.}
\label{fig:1dShuOsherOE}
\end{figure}

Table~\ref{tab:runtime-main} reports the runtime attribution.  In Panel~A, reconstruction accounts for about $83\%$ and $81\%$ of the HWENO-U and OE-HWENO runtimes on Shu--Osher, and about $85\%$ and $77\%$ on double Mach; so the dominant measured component in these compact same $\mathbb P^1$-state HWENO baselines is stagewise reconstruction. For FAM5, the end-of-step OE correction remains secondary: the total runtime is about $0.67$ of HWENO-U and $0.20$ of OE-HWENO on Shu--Osher, and about $0.42$ and $0.14$ on double Mach. Panel~B shows the extra cost of switching FAM5 itself from conservative to local characteristic variables. Because the flux, RK integrator, limiter logic, and timing convention are shared, Table~\ref{tab:runtime-main} is a runtime-attribution study within one implementation.

\begin{table}[t]
\centering
\begin{threeparttable}
\caption{Runtime attribution for representative examples within the shared implementation and hardware setting described at the start of this section. Panel~A reports the same $\mathbb P^1$-state compact FAM/HWENO comparison. Panel~B compares componentwise conservative-variable and local characteristic-variable FAM5 implementations. All times are in seconds.}
\label{tab:runtime-main}
\scriptsize
\begin{tabular}{llcccc}
\toprule[1.2pt]
Problem & Variant & Total & Reconstruction & OE & Other \\
\midrule
\multicolumn{6}{l}{\textit{Panel A: same $\mathbb P^1$-state compact FAM/HWENO comparison}} \\
Shu--Osher
& FAM5     & 0.135    & 0.058    & 0.023    & 0.054 \\
& HWENO-U  & 0.201    & 0.167    & --       & 0.034 \\
& OE-HWENO & 0.681    & 0.554    & 0.079    & 0.048 \\
\cmidrule(lr){1-6}
Double Mach
& FAM5     & 2.23E+04 & 1.04E+04 & 4.24E+03 & 7.66E+03 \\
& HWENO-U  & 5.29E+04 & 4.49E+04 & --       & 8.00E+03 \\
& OE-HWENO & 1.56E+05 & 1.20E+05 & 2.59E+04 & 1.01E+04 \\
\midrule
\multicolumn{6}{l}{\textit{Panel B: FAM5 conservative vs local characteristic variables}} \\
Shu--Osher & conservative & 0.135 & 0.058 & 0.023 & 0.054 \\
 & characteristic & 0.169 & 0.095 & 0.026 & 0.048 \\
\cmidrule(lr){1-6}
Double Mach & conservative & 2.23E+04 & 1.04E+04 & 4.24E+03 & 7.66E+03 \\
 & characteristic & 4.02E+04 & 2.87E+04 & 3.98E+03 & 7.52E+03 \\
\bottomrule[1.2pt]
\end{tabular}
\end{threeparttable}
\end{table}

For FAM5, OE accounts for about $17\%$ of total runtime on Shu--Osher and $19\%$ on double Mach, whereas reconstruction accounts for about $43\%$ and $47\%$. In Panel~B, switching from conservative to local characteristic variables multiplies reconstruction time by about $1.64$ and $2.76$, increasing total FAM5 runtime by factors of about $1.25$ and $1.80$; the OE and ``other'' buckets change only mildly. Thus the lower reported timings in these selected comparisons come from compact linear RK-stage reconstruction plus single end-of-step OE, with an additional lower measured reconstruction cost in these tests when FAM5 uses conservative rather than local characteristic variables.

At the plotted resolution, conservative- and characteristic-variable FAM5 solutions are essentially indistinguishable in the displayed density fields on both benchmarks; supplementary figures show the same behavior. At these resolutions, local characteristic-variable reconstruction within FAM5 adds measured cost without an observable gain in those plotted density fields.

\section{Conclusion}\label{sec:conclusion}

We have developed a compact finite-average-moment (FAM) framework that shifts the main computational burden in high-order shock-capturing away from repeated nonlinear reconstruction and toward compact linear reconstruction combined with a single end-of-step oscillation-elimination correction. The evolved state is kept fixed at the $\mathbb{P}^1$ moment level, consisting only of cell averages and scaled first-order moments, while third- through sixth-order accuracy is recovered by compact linear moment reconstructions. In this way, the target order is decoupled from the number of locally evolved degrees of freedom.

 The core algorithmic contribution is the combined design of the method: a minimal fixed-moment state, polynomially exact compact linear RK-stage reconstructions, and a single derivative-free oscillation-elimination (OE) correction applied only at the end of each time step. The OE step preserves cell averages exactly and damps only the scaled first-order moments through an explicit exponential update. Compared with traditional WENO/HWENO-type schemes, the proposed framework avoids repeated stagewise nonlinear reconstructions. Compared with DG-based OEDG/COS(DG) frameworks, it keeps the evolved state fixed at the $\mathbb{P}^1$ level rather than updating higher-degree polynomial states.
 
 The analysis and numerical experiments support this design. For the linear-advection backbone without OE, Fourier analysis identifies a $k$th-order physical mode, a strongly damped spurious mode, $(k+1)$th-order cell-average superconvergence, and sampled admissible CFL estimates. Reconstruction exactness and consistency results further justify the compact fixed-moment formulation. Numerically, the FAM schemes achieve the expected smooth accuracy and produce robust nonoscillatory profiles across representative scalar and compressible Euler benchmarks, including shock interactions, multiscale tests, strong-shock problems, and the two-dimensional double Mach reflection problem.
 
 Ablation and runtime studies identify the source of the observed efficiency gains. Although the OE correction is important for the shock-interaction tests considered here, it is not the dominant measured cost. In matched-order and same-$\mathbb{P}^1$-state comparisons, the reduced complete-run wall-clock times are primarily attributable to replacing stagewise nonlinear HWENO-type reconstructions by compact linear reconstructions and applying OE only once per time step. The standard BP/PP limiter is retained only as an auxiliary admissibility safeguard in the explicitly indicated tests.
 
 The present study also leaves several directions open. The analysis is currently limited to the linear-advection backbone and does not constitute a full nonlinear stability theory for the OE-corrected schemes. In addition, the componentwise conservative-variable implementation for systems is supported by the reported numerical evidence, but a general theoretical characterization of when such a treatment is sufficient remains to be developed. Extending the compact reconstruction and end-of-step OE strategy beyond uniform Cartesian grids, and adapting it to systems with more complex physical constraints, will be important topics for future work.

%\clearpage
\begin{appendix}

\section{How the compact linear reconstruction formulas are obtained}\label{app:derivation}
The main text explains the stencil constraints used by the FAM reconstruction. The algebra behind the explicit formulas is elementary but useful to record once: one writes the target polynomial in a local orthogonal basis, enforces the available cell-average and first-order-moment conditions on the chosen compact stencil, and solves the resulting square linear system once and for all. The implementation then evaluates the precomputed linear combinations listed below; machine-readable versions of the jump and Fourier matrices are included as ancillary text files in this source package.

In one dimension, the 3rd- and 4th-order reconstructions come from a cubic ansatz determined by three neighboring cell averages and the central first-order moment, while the 5th- and 6th-order reconstructions come from a quintic ansatz determined by three neighboring cell averages and three first-order moments. In two dimensions, the 3rd- and 4th-order reconstructions come from an incomplete quartic ansatz on a $3\times3$ patch, and the 5th- and 6th-order reconstructions come from an incomplete sixth-degree ansatz on the same patch using the directional first-order moments indicated in Section~\ref{app1}. The formulas recorded in Sections~\ref{app:1dcoeff} and \ref{app1} are the symbolic solutions of those square moment-matching systems.

As a concrete illustration, consider the one-dimensional cubic auxiliary
reconstruction used for FAM3/FAM4,
$Q(x)=\sum_{\ell=0}^{3}c_\ell\phi_i^{(\ell)}(x).$
It is determined by matching the cell averages on $I_{i-1}$, $I_i$, and
$I_{i+1}$, together with the first moment on the central cell. The central
constraints give
$c_0=u_i^{(0)}, c_1=12u_i^{(1)}.$
The remaining two constraints, obtained from the averages over the neighboring
cells, are
$c_0-c_1+c_2-\frac{11}{10}c_3=u_{i-1}^{(0)},
c_0+c_1+c_2+\frac{11}{10}c_3=u_{i+1}^{(0)}.$
Thus,
$c_0=u_i^{(0)},
c_1=12u_i^{(1)},
c_2=\frac12\left(u_{i-1}^{(0)}-2u_i^{(0)}+_{i+1}^{(0)}\right),
c_3=\frac{5}{11}\left(u_{i+1}^{(0)}-u_{i-1}^{(0)}\right)
-\frac{120}{11}u_i^{(1)}.$
The same moment-matching procedure leads to the higher-order coefficient
formulas on larger stencils.

\section{One-dimensional reconstruction coefficients}\label{app:1dcoeff}
For completeness, we record the explicit coefficients used by the one-dimensional reconstructions introduced in Section~2.4 of the main paper. We use the local orthogonal basis
\[
\phi_i^{(0)}(x)=1,
\phi_i^{(1)}(x)=\xi_i:=\frac{x-x_i}{\Delta x},
\phi_i^{(2)}(x)=\xi_i^2-\frac{1}{12},
\]
\[
\phi_i^{(3)}(x)=\xi_i^3-\frac{3}{20}\xi_i,
\phi_i^{(4)}(x)=\xi_i^4-\frac{3}{14}\xi_i^2+\frac{3}{560},
\phi_i^{(5)}(x)=\xi_i^5-\frac{5}{18}\xi_i^3+\frac{5}{336}\xi_i.
\]

For FAM3/FAM4, let
$Q(x)=\sum_{\ell=0}^{3}c_{\ell}\phi_i^{(\ell)}(x),$
with
$c_0=u_i^{(0)},\qquad c_1=12u_i^{(1)},c_2=\frac12\big(u_{i-1}^{(0)}-2u_i^{(0)}+u_{i+1}^{(0)}\big),c_3=\frac{5}{11}\big(u_{i+1}^{(0)}-u_{i-1}^{(0)}\big)-\frac{120}{11}u_i^{(1)}.$ Then $p_2(x)=\sum_{\ell=0}^{2}c_{\ell}\phi_i^{(\ell)}(x)$ gives the
third-order reconstruction, while $p_3(x)=Q(x)$ gives the fourth-order one.

For FAM5/FAM6, let $Q(x)=\sum_{\ell=0}^{5}c_{\ell}\phi_i^{(\ell)}(x),$
where $c_0=u_i^{(0)}$, $c_1=12u_i^{(1)}$, and
\begin{equation*}
\begin{aligned}
&c_2=\frac{73}{56}(u^{(0)}_{i-1}-2u^{(0)}_{i}+u^{(0)}_{i+1})+\frac{135}{28}(u^{(1)}_{i-1}-u^{(1)}_{i+1}),
\\
&c_3=\frac{595}{324}(u^{(0)}_{i+1}-u^{(0)}_{i-1})-\frac{985}{162}(u^{(1)}_{i-1}+u^{(1)}_{i+1})-\frac{2585}{81}u^{(1)}_{i},
\\
&c_4=-\frac{5}{8}(u^{(0)}_{i-1}-2u^{(0)}_{i}+u^{(0)}_{i+1})-\frac{15}{4}(u^{(1)}_{i-1}-u^{(1)}_{i+1}),
\\
&c_5=\frac{35}{36}(u^{(0)}_{i-1}-u^{(0)}_{i+1})+\frac{77}{18}(u^{(1)}_{i-1}+u^{(1)}_{i+1})+\frac{133}{9}u^{(1)}_{i}.
\end{aligned}
\end{equation*}
Then $p_4(x)=\sum_{\ell=0}^{4}c_{\ell}\phi_i^{(\ell)}(x)$ gives the fifth-order reconstruction, while $p_5(x)=Q(x)$ gives the sixth-order one.

\section{One-dimensional OE jump formulas}\label{app:1djump}
The one-dimensional OE step in the main paper uses jumps of the reconstructed solution at cell interfaces. For FAM$k$, $k=3,4,5,6$, those jumps are the following explicit linear combinations of the moments after the final RK stage.

\begin{table}[t]
\centering
\caption{Interface jumps $\jump{u_h^*}_{i+\frac12}$ for the one-dimensional OE update,	with $u_i^{(0)}=u_i^{n,(0)}$ and $u_i^{(1)}=u_i^{n,(1)}$.}
\label{tab:supp:jump1}
\small 
\begin{tabular}{c|l}
\toprule[1.2pt]
$k$ & $\jump{u_h^*}_{i+\frac12}$ \\
\midrule
$3$ & $\displaystyle \frac{1}{12}\left(u^{(0)}_{i+2}-u^{(0)}_{i-1}+9(u^{(0)}_{i+1}-u^{(0)}_{i})-72(u^{(1)}_{i+1}+u^{(1)}_{i})\right)$ \\
\midrule
$4$ & $\displaystyle \frac{1}{33}\left(2(u^{(0)}_{i+2}-u^{(0)}_{i-1})+24(u^{(0)}_{i+1}-u^{(0)}_{i})-180(u^{(1)}_{i+1}+u^{(1)}_{i})\right)$ \\
\midrule
$5$ & $\displaystyle \frac{1}{1296}\left(151(u^{(0)}_{i+2}-u^{(0)}_{i-1})+367(u^{(0)}_{i+1}-u^{(0)}_{i})-578(u^{(1)}_{i-1}+u^{(1)}_{i+2})-4342(u^{(1)}_{i+1}+u^{(1)}_{i})\right)$ \\
\midrule
$6$ & $\displaystyle \frac{1}{108}\left(13(u^{(0)}_{i+2}-u^{(0)}_{i-1})+31(u^{(0)}_{i+1}-u^{(0)}_{i})-50(u^{(1)}_{i+2}+u^{(1)}_{i-1})-370(u^{(1)}_{i+1}+u^{(1)}_{i})\right)$ \\
\bottomrule[1.2pt]
\end{tabular}
\end{table}

\section{Additional smooth-problem data}\label{app:add-smooth}
Tables~\ref{tab:supp:1dBurMoment} and \ref{tab:supp:1dEulerMoment} record the first-order-moment errors for the smooth Burgers and Euler test of the main paper. They confirm the expected $k$th-order convergence of the evolved first-order moments and complement the cell-average tables reported in the main text.

\begin{table}[t]
\centering
\caption{$L^1$ errors of the first-order moment for the smooth Burgers test of the main paper.}
\label{tab:supp:1dBurMoment}
\small 
\begin{tabular}{c|cccccccc}
\toprule[1.2pt]
$N_x$ & \multicolumn{2}{c}{FAM3} & \multicolumn{2}{c}{FAM4} & \multicolumn{2}{c}{FAM5} & \multicolumn{2}{c}{FAM6} \\
\cmidrule(l){2-3}\cmidrule(l){4-5}\cmidrule(l){6-7}\cmidrule(l){8-9}
& $L^1$ error & order & $L^1$ error & order & $L^1$ error & order & $L^1$ error & order \\
\midrule
30 & 1.50E-05 & --   & 5.54E-06 & --   & 5.40E-07 & --   & 4.99E-07 & --   \\
60 & 2.37E-06 & 2.67 & 4.46E-07 & 3.64 & 1.15E-08 & 5.55 & 8.85E-09 & 5.82 \\
90 & 7.51E-07 & 2.83 & 8.93E-08 & 3.96 & 1.08E-09 & 5.83 & 7.59E-10 & 6.06 \\
120& 3.28E-07 & 2.88 & 2.85E-08 & 3.97 & 2.09E-10 & 5.71 & 1.34E-10 & 6.04 \\
150& 1.71E-07 & 2.93 & 1.18E-08 & 3.96 & 5.84E-11 & 5.72 & 3.62E-11 & 5.86 \\
180& 9.97E-08 & 2.94 & 5.71E-09 & 3.97 & 2.08E-11 & 5.65 & 1.29E-11 & 5.67 \\
\bottomrule[1.2pt]
\end{tabular}
\end{table}

\begin{table}[htpb]
	\centering
	\caption{$L^1$ errors of the first-order moment for the smooth 1D Euler test of the main paper.}
	\label{tab:supp:1dEulerMoment}
	\small
		\begin{tabular}{c|cccccccc}
			\toprule[1.2pt]
			$N_x$ & \multicolumn{2}{c}{FAM3} & \multicolumn{2}{c}{FAM4} & \multicolumn{2}{c}{FAM5} & \multicolumn{2}{c}{FAM6} \\
			\cmidrule(l){2-3}\cmidrule(l){4-5}\cmidrule(l){6-7}\cmidrule(l){8-9}
			& $L^1$ error & order & $L^1$ error & order & $L^1$ error & order & $L^1$ error & order \\
			\midrule
			20&	 5.47E-06&    -- &	9.82E-07&    -- &	4.79E-09&    -- &	2.63E-09&    -- 	\\
			30&	 1.61E-06&   3.01&	1.79E-07&   4.20&	5.30E-10&   5.43&	2.19E-10&   6.13	\\
			40&	 6.82E-07&   2.99&	5.43E-08&   4.14&	1.18E-10&   5.23&	3.79E-11&   6.09	\\
			50&	 3.50E-07&   2.99&	2.18E-08&   4.10&	3.72E-11&   5.16&	9.82E-12&   6.06	\\
			60&	 2.02E-07&   3.00&	1.04E-08&   4.07&	1.47E-11&   5.10&	3.26E-12&   6.04	\\
			70&	 1.28E-07&   2.99&	5.55E-09&   4.05&	6.70E-12&   5.08&	1.29E-12&   6.02	\\
			\bottomrule[1.2pt]
	\end{tabular}
\end{table}

\section{Supplementary remark on the Type~II reference quantity}\label{app:typeII}
The Type~II OE coefficient uses the unnormalized local sum $S_{\Lambda}=\sum_{p\in\Lambda}u_p^{n,(0)}$
instead of the more conventional local average. This is intentional rather than a typographical shortcut. A simple heuristic explains why. If the stencil values take the form
$u_p=c+\varepsilon v_p, p\in\Lambda,$
with a large background state $c$ and a comparatively small local fluctuation $\varepsilon v_p$, then
\[
S_{\Lambda}=|\Lambda|c+\varepsilon\sum_{q\in\Lambda}v_q,\qquad
u_p-S_{\Lambda}=(1-|\Lambda|)c+\varepsilon\Big(v_p-\sum_{q\in\Lambda}v_q\Big).
\]
Hence the denominator in the Type~II coefficient becomes large when a small feature is superposed on a large background state, and the resulting damping is weakened. This is precisely the behavior desired in the multiscale tests of the main paper, where an affine-invariant coefficient can over-damp the small-scale structures. The price is that Type~II is no longer globally affine invariant. We therefore interpret it as a \emph{background-aware, locally scale-invariant} choice rather than an affine-invariant one.

\section{Additional benchmark descriptions and figures}\label{app:extra-bench}
The examples in this section provide supporting evidence for the numerical discussion in Section~4. They help clarify the role of the OE coefficients and the robustness of the overall workflow on problems that are secondary to the main benchmark set.

\paragraph{Discontinuous linear advection}
We solve the linear advection equation $\partial_t u+\partial_x u=0$ on $[-1,1]$ with periodic boundaries. The initial profile is the standard composite test consisting of Gaussians, a square wave, a sharp triangle, and a half ellipse,   Ref.~[14] in the main text. We initialize the computation with $u(x,0)=\lambda u_0(x)+c$ using $(\lambda,c)=(0.01,1)$ and evolve to one period $T=2$. Figure~\ref{fig:supp:1dAdvec} plots the rescaled numerical cell averages $(u-c)/\lambda$ for FAM3 and FAM5. In this linear-advection setting, the Type~I OE correction preserves the expected global scale/translation behavior much better than Type~II, whose locally scaled denominator can generate visible oscillations near the discontinuities.

\begin{figure}[t]
\centering
\includegraphics[width=0.68\textwidth]{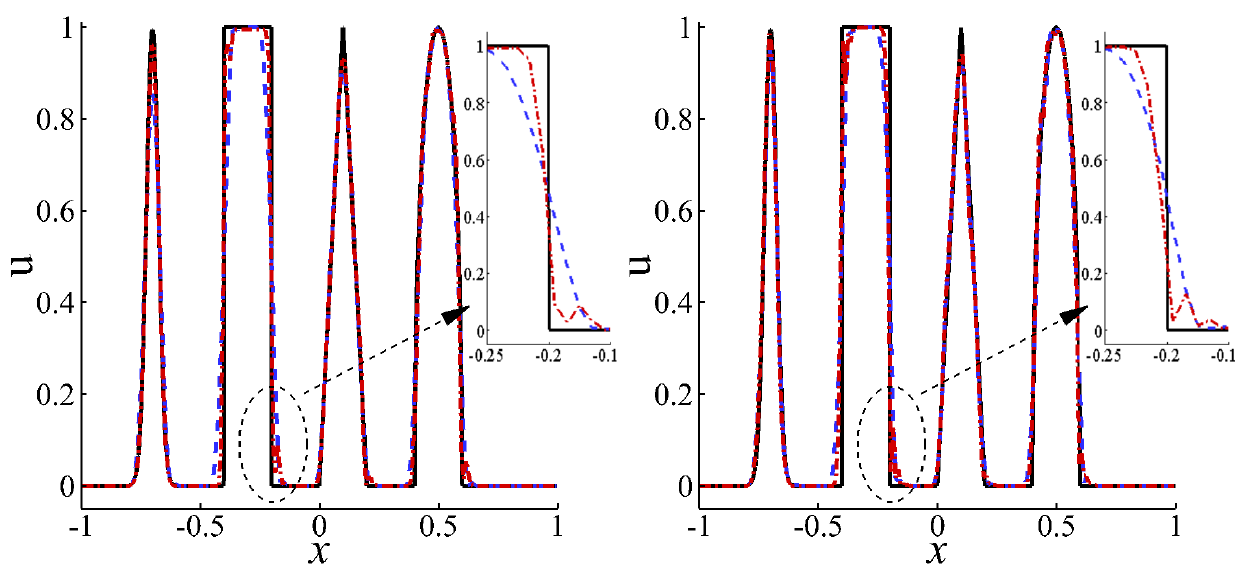}
\caption{Rescaled cell averages for the discontinuous linear-advection test at $T=2$ on 200 cells. Left: FAM3. Right: FAM5. Blue dashed: Type~I OE; red dash-dotted: Type~II OE.}
\label{fig:supp:1dAdvec}
\end{figure} 

\paragraph{Woodward--Colella blast wave} 
We simulated the Woodward--Colella blast wave problem on $[0,1]$ with reflective boundaries and initial data $\bm u(x,0)=\bm u_0(x)$, where $(\rho_0,\mu_0,p_0)=(1,0,10^3)$ for $0\le x\le0.1$, $(1,0,10^{-2})$ for $0.1<x\le0.9$, and $(1,0,10^2)$ for $0.9<x\le1$  with the final time $T=0.038$. Figure~\ref{fig:supp:blastwave} reports FAM density profiles for this highly compressive test. As in the main text, these auxiliary Euler simulations are conducted componentwise in conservative variables; they are supporting examples rather than a separate robustness theorem. The black curve denotes the reference solution from the FAM5 scheme using 8000 cells.

\begin{figure}[t]
\centering
\includegraphics[width=0.86\textwidth]{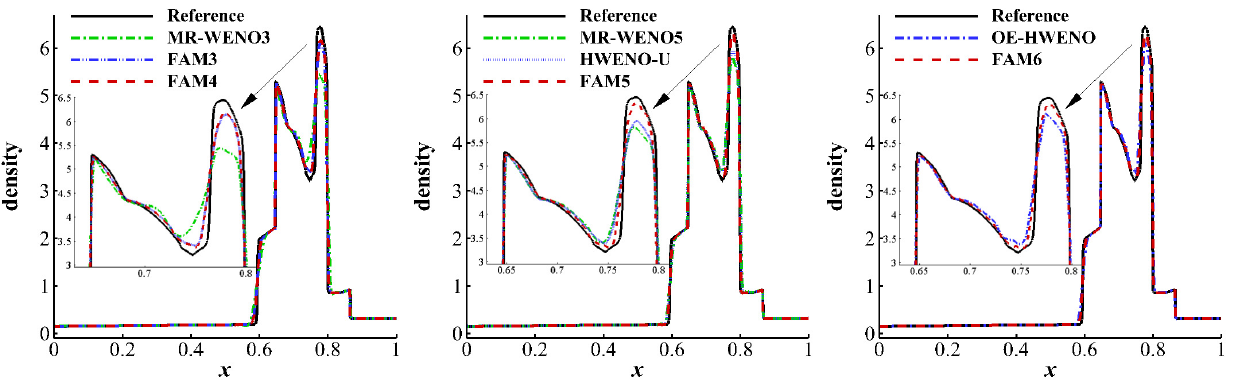}
\caption{Density for the Woodward--Colella blast wave problem on 800 cells. }
\label{fig:supp:blastwave}
\end{figure}

\paragraph{Mach~2000 jet}
We also record the standard Mach~2000 astrophysical jet problem for the two-dimensional Euler equations with $\gamma=5/3$. The computational domain is $[0,1]\times[-0.25,0.25]$. The initial ambient state is $(\rho,\mu,\nu,p)=(0.5,0,0,0.4127)$. On the left boundary, a jet inflow $(5,800,0,0.4127)$ is prescribed for $y\in[-0.05,0.05]$, while the ambient state is imposed on the rest of the left boundary; the right, top, and bottom boundaries use outflow conditions. The snapshot in Figure~\ref{fig:supp:hm2000full} is taken at $T=0.001$ on a $600\times300$ mesh. Its role is qualitative: it provides another multiscale system problem on which the Type~II coefficient gives cleaner small-scale structures than Type~I.
	  
\begin{figure}[!htbp]
\centering
\includegraphics[width=0.97\textwidth]{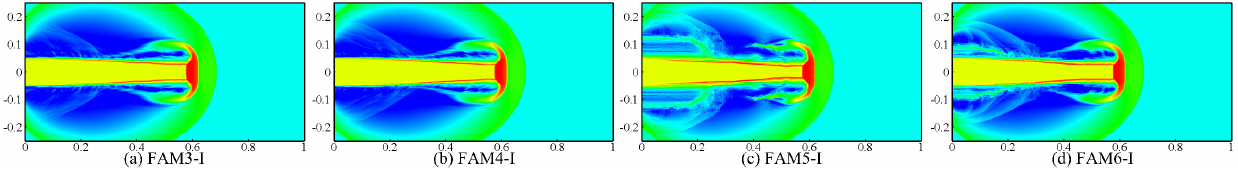}\\
\includegraphics[width=0.97\textwidth]{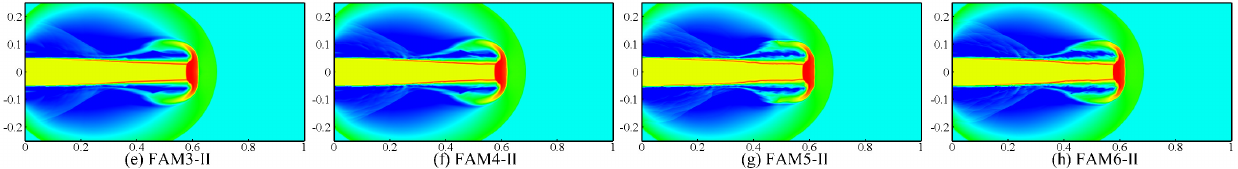}
\caption{Full Type~I/Type~II density galleries for the Mach~2000 jet test described in Appendix F. Top: Type~I OE. Bottom: Type~II OE.} 
\label{fig:supp:hm2000full}
\end{figure}

\section{Supporting sampled CFL curves}\label{app:runtime-cfl}
This section provides the sampled CFL curves corresponding to Proposition 3.4 of the main text. 

\begin{figure}[!htbp]
\centering
\includegraphics[width=0.82\textwidth]{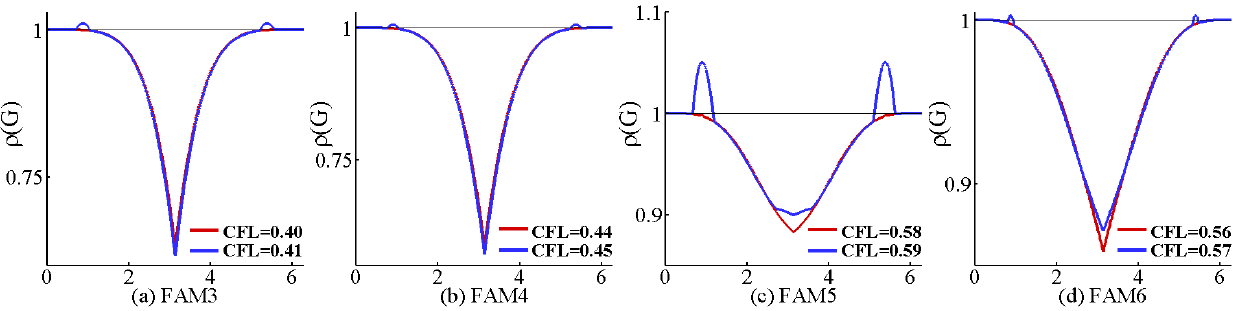}
\caption{Sampled spectral radius of the SSP-RK3 amplification matrix for FAM$k$, $k=3,4,5,6$, used to identify the sampled admissible CFL estimates reported in the main paper.}
\label{fig:supp:CFL}
\end{figure}
 
\section{Parameter sensitivity study}\label{app:add-num}
This section supports the parameter choices in the main paper. Figure~\ref{fig:supp:blastCk} shows the sensitivity of the blast wave profile to the constants $\gamma_k$ used in the OE coefficient. The solution changes only mildly over a representative range around the values adopted in the main paper, which helps justify that the reported choices are representative rather than finely tuned. The sampled CFL curves are reported in Section~\ref{app:runtime-cfl}.

\begin{figure}[!htbp]
\centering
\includegraphics[width=0.8\textwidth]{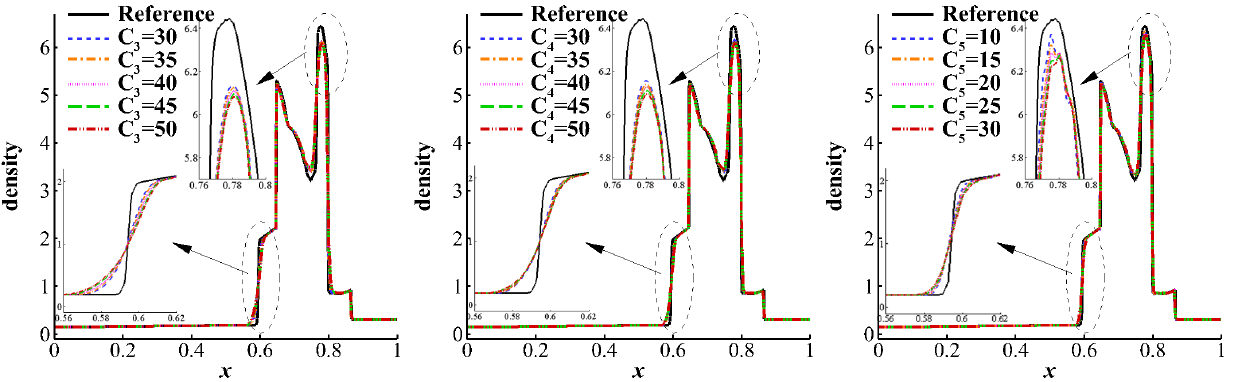}
\caption{Sensitivity of the blast wave solution to the OE constants $\gamma _k$. All runs use Type II OE and the PP limiter.}
\label{fig:supp:blastCk} 
\end{figure}

\section{Supplementary figures for OE ablation and runtime attribution}
\begin{figure}[!htbp]
	\centering
	\includegraphics[width=0.65\textwidth]{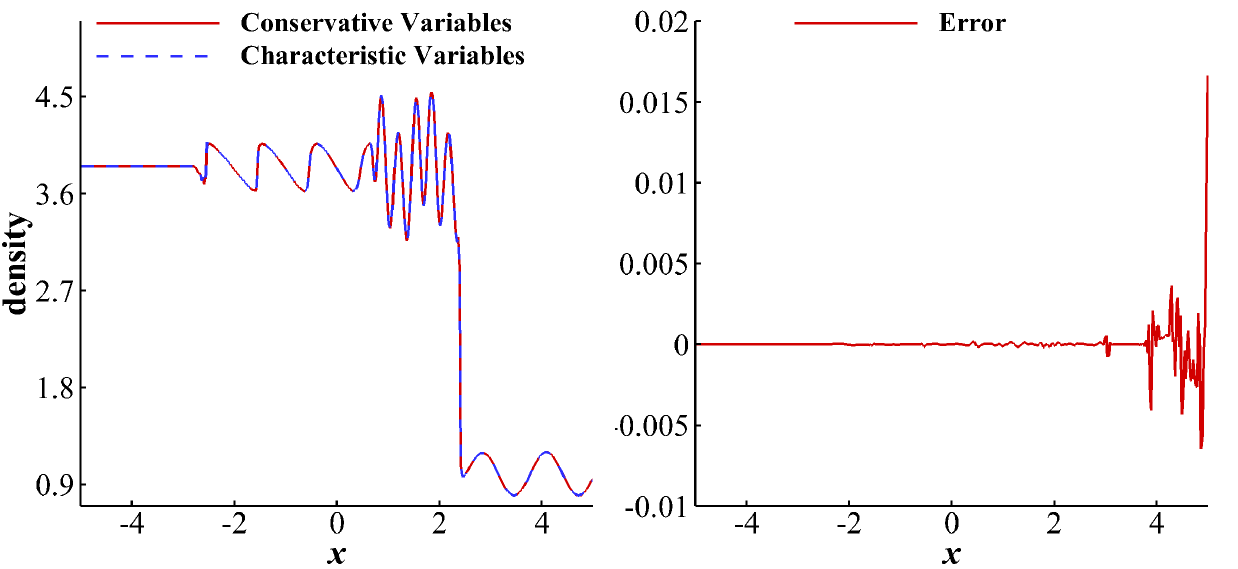}
	\caption{FAM5 density profiles for the Shu--Osher problem on 400 cells, obtained using componentwise conservative-variable reconstruction and local characteristic-variable reconstruction. The corresponding errors $\rho_{\rm cons} - \rho_{\rm char}$ are shown on the right.}
	\label{fig:supp:ShuOsher}
\end{figure}
This section provides supporting evidence for the numerical discussion in Section~4.3 of the main paper. Figures~\ref{fig:supp:ShuOsher} and~\ref{fig:supp1:DoubleMach} compare the FAM5 results obtained with conservative-variable and characteristic-variable reconstructions for the Shu--Osher and double Mach reflection problems, respectively. The corresponding error distributions are also shown. In both tests, the two reconstruction strategies lead to only minor differences, supporting the observation that the choice between conservative and characteristic variables does not materially affect the reported results.

Figure~\ref{fig:supp2:DoubleMach} further compares the FAM results without the OE procedure and with the type-I/type-II OE procedures. The results show that the OE procedure effectively suppresses spurious oscillations, with the type-II OE procedure producing slightly sharper flow structures in the more complex multiscale flow.

\begin{figure}[!htbp]
	\centering
	\includegraphics[width=0.93\textwidth]{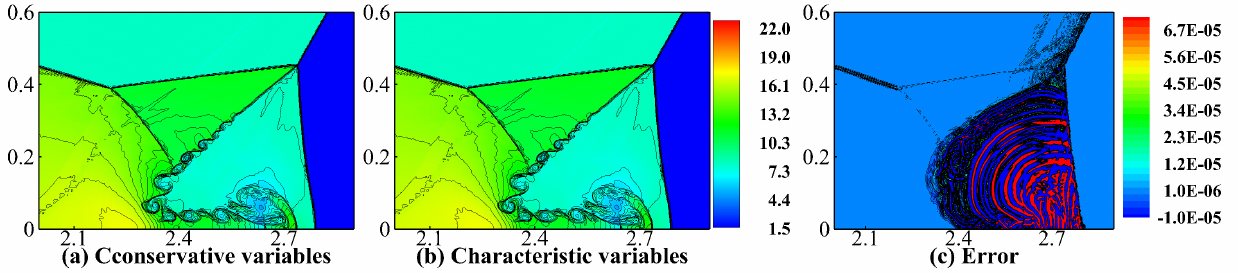}
	\caption{FAM5 density contours for the double Mach reflection problem, obtained using componentwise conservative-variable reconstruction and local characteristic-variable reconstruction. The corresponding error distributions $\rho_{\rm cons} - \rho_{\rm char}$ are shown on the right.}
	\label{fig:supp1:DoubleMach}
\end{figure}

\begin{figure}[!htbp]
	\centering
	\includegraphics[width=0.9\textwidth]{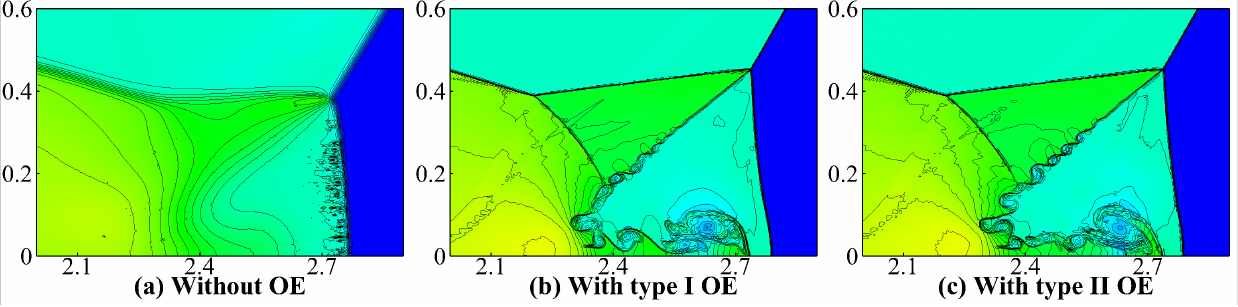}
	\caption{FAM5 density contours for the double Mach reflection problem on a $1440\times480$ mesh, computed without OE and with the Type~I and Type~II OE procedures. The no-OE computation employs the standard positivity-preserving limiter to maintain admissibility in this ablation test.}
	\label{fig:supp2:DoubleMach}
\end{figure}

\section{Full asymptotic expansions for the Fourier analysis}\label{app:fourier-exp}
For completeness we record below the full symbolic expansions of the physical and spurious modes underlying the leading formulas quoted in Section~3 of the main paper:
\[\begin{aligned}
&\bullet\ k=3:\quad
\zeta_1=-\mathrm i\omega-\frac{\mathrm i\omega^5}{720}\Delta x^4+\mathcal O(\Delta x^5),
\zeta_2=-\frac{6}{\Delta x}+\mathrm i\mathcal O(1),
\\&
\bm V_1-\hat{\bm u}_{\rm ex}=
\begin{pmatrix}
\frac{\omega^4}{720}\Delta x^4+\mathcal O(\Delta x^5)\\[0.2ex]
-\frac{\mathrm i\omega^3}{720}\Delta x^3+\mathcal O(\Delta x^4)
\end{pmatrix},
\bm V_2=
\begin{pmatrix}
-\frac{\omega^4}{720}\Delta x^4+\mathcal O(\Delta x^5)\\[0.2ex]
\frac{\mathrm i\omega^3}{720}\Delta x^3+\mathcal O(\Delta x^4)
\end{pmatrix},
\\
&\bullet\ k=4:\quad
\zeta_1=-\mathrm i\omega-\frac{11\omega^6}{7200}\Delta x^5+\mathcal O(\Delta x^6),
\quad
\zeta_2=-\frac{60}{11\Delta x}+\mathrm i\mathcal O(1),
\\
&\hspace{1.95cm}
\bm V_1-\hat{\bm u}_{\rm ex}=
\begin{pmatrix}
-\frac{11\mathrm i\omega^5}{7200}\Delta x^5+\mathcal O(\Delta x^6)\\[0.2ex]
-\frac{11\omega^4}{7200}\Delta x^4+\mathcal O(\Delta x^5)
\end{pmatrix},
\quad
\bm V_2=
\begin{pmatrix}
\frac{11\mathrm i\omega^5}{7200}\Delta x^5+\mathcal O(\Delta x^6)\\[0.2ex]
\frac{11\omega^4}{7200}\Delta x^4+\mathcal O(\Delta x^5)
\end{pmatrix},
\\
&\bullet\ k=5:\quad
\zeta_1=-\mathrm i\omega+\frac{\mathrm i\omega^7}{114800}\Delta x^6+\mathcal O(\Delta x^7),
\quad
\zeta_2=-\frac{205}{54\Delta x}+\mathrm i\mathcal O(1),
\\
&\hspace{1.95cm}
\bm V_1-\hat{\bm u}_{\rm ex}=
\begin{pmatrix}
-\frac{\omega^6}{114800}\Delta x^6+\mathcal O(\Delta x^7)\\[0.2ex]
\frac{\mathrm i\omega^5}{114800}\Delta x^5+\mathcal O(\Delta x^6)
\end{pmatrix},
\quad
\bm V_2=
\begin{pmatrix}
\frac{\omega^6}{114800}\Delta x^6+\mathcal O(\Delta x^7)\\[0.2ex]
-\frac{\mathrm i\omega^5}{114800}\Delta x^5+\mathcal O(\Delta x^6)
\end{pmatrix},
\\
&\bullet\ k=6:\quad
\zeta_1=-\mathrm i\omega-\frac{\omega^8}{19600}\Delta x^7+\mathcal O(\Delta x^8),
\quad
\zeta_2=-\frac{35}{9\Delta x}+\mathrm i\mathcal O(1),
\\
&\hspace{1.95cm}
\bm V_1-\hat{\bm u}_{\rm ex}=
\begin{pmatrix}
-\frac{\mathrm i\omega^7}{19600}\Delta x^7+\mathcal O(\Delta x^8)\\[0.2ex]
-\frac{\omega^6}{19600}\Delta x^6+\mathcal O(\Delta x^7)
\end{pmatrix},
\quad
\bm V_2=
\begin{pmatrix}
\frac{\mathrm i\omega^7}{19600}\Delta x^7+\mathcal O(\Delta x^8)\\[0.2ex]
\frac{\omega^6}{19600}\Delta x^6+\mathcal O(\Delta x^7)
\end{pmatrix}.
\end{aligned}
\]

\section{Constant matrices used in the one-dimensional Fourier symbol}\label{app2b}
\begin{table}[H]
	\centering
	\belowrulesep=0pt
	\aboverulesep=0pt
	\caption{Constant matrices ${\bf D}_{-2}^{(k)}$, ${\bf D}_{-1}^{(k)}$, ${\bf D}_{0}^{(k)}$, and ${\bf D}_{1}^{(k)}$ used in the one-dimensional semi-discrete Fourier symbol for $k=3,4,5,6$.}\label{tab_Dk}
	\renewcommand{\arraystretch}{1.5}
	\setlength{\arraycolsep}{1.5pt}
	\setlength{\tabcolsep}{1.5pt}
	\begin{tabular}{c|c|c|c|c}
		\toprule[1.0pt]
		$k$& ${\bf D}_{-2}^{(k)}$ & ${\bf D}_{-1}^{(k)}$ & ${\bf D}_{0}^{(k)}$ & ${\bf D}_{1}^{(k)}$ \\
		\midrule
		$3$
		&$\begin{bmatrix}\frac{1}{12}&0\\[1pt]-\frac{1}{24}&0\end{bmatrix}$
		&$\begin{bmatrix}\frac{3}{4}&6\\[1pt]-\frac{11}{24}&-3\end{bmatrix}$
		&$\begin{bmatrix}-\frac{3}{4}&-6\\[1pt]\frac{13}{24}&-3\end{bmatrix}$
		&$\begin{bmatrix}-\frac{1}{12}&0\\[1pt]-\frac{1}{24}&0\end{bmatrix}$
		\\\midrule
		$4$
		&$\begin{bmatrix}\frac{2}{33}&0\\[1pt]-\frac{1}{33}&0\end{bmatrix}$
		&$\begin{bmatrix}\frac{17}{22}&\frac{60}{11}\\[1pt]-\frac{59}{132}&-\frac{30}{11}\end{bmatrix}$
		&$\begin{bmatrix}-\frac{8}{11}&-\frac{60}{11}\\[1pt]\frac{35}{66}&-\frac{30}{11}\end{bmatrix}$
		&$\begin{bmatrix}-\frac{7}{66}&0\\[1pt]-\frac{7}{132}&0\end{bmatrix}$
		\\\midrule
		$5$
		&$\begin{bmatrix}\frac{151}{1296}&\frac{289}{648}\\[1pt]-\frac{151}{2592}&-\frac{289}{1296}\end{bmatrix}$
		&$\begin{bmatrix}\frac{605}{1296}&\frac{95}{24}\\[1pt]-\frac{907}{2592}&-\frac{3143}{1296}\end{bmatrix}$
		&$\begin{bmatrix}-\frac{367}{1296}&-\frac{131}{24}\\[1pt]\frac{1447}{2592}&-\frac{2171}{1296}\end{bmatrix}$
		&$\begin{bmatrix}-\frac{389}{1296}&\frac{683}{648}\\[1pt]-\frac{389}{2592}&\frac{683}{1296}\end{bmatrix}$
		\\\midrule
		$6$
		&$\begin{bmatrix}\frac{13}{108}&\frac{25}{54}\\[1pt]-\frac{13}{216}&-\frac{25}{108}\end{bmatrix}$
		&$\begin{bmatrix}\frac{25}{54}&4\\[1pt]-\frac{19}{54}&-\frac{133}{54}\end{bmatrix}$
		&$\begin{bmatrix}-\frac{31}{108}&-\frac{11}{2}\\[1pt]\frac{121}{216}&-\frac{185}{108}\end{bmatrix}$
		&$\begin{bmatrix}-\frac{8}{27}&\frac{28}{27}\\[1pt]-\frac{4}{27}&\frac{14}{27}\end{bmatrix}$
		\\
		\bottomrule[1.0pt]
	\end{tabular}
\end{table}

\section{Orthogonal basis functions and polynomial coefficients for the 2D reconstruction}\label{app1}

The following local basis is orthogonal, but not normalized, on the central cell
$I_{i,j}$. Let
$\xi_i=\frac{x-x_i}{\Delta x}, 
\eta_j=\frac{y-y_j}{\Delta y}.$
The basis functions used in the two-dimensional reconstructions are
	\begin{align*}
		&\phi_{i,j}^{(0)}(x,y)=1,~
		\phi_{i,j}^{(1)}(x,y)=\xi_i:= \frac{x-x_i}{\Delta x},~
		\phi_{i,j}^{(2)}(x,y)=\eta_j:=\frac{y-y_j}{\Delta y},
		\\&
		\phi_{i,j}^{(3)}(x,y)=\xi_i^2-\frac{1}{12},~
		\phi_{i,j}^{(4)}(x,y)=\xi_i\eta_j,~
		\phi_{i,j}^{(5)}(x,y)=\eta_j^2-\frac{1}{12},
		\\&
		\phi_{i,j}^{(6)}(x,y)=\xi_i^3-\frac{3}{20}\xi_i, \quad 
		\phi_{i,j}^{(7)}(x,y)=\phi_{i,j}^{(3)}(x,y)\eta_j, \quad 
		\phi_{i,j}^{(8)}(x,y)=\xi_i\phi_{i,j}^{(5)}(x,y),
		\\&
		\phi_{i,j}^{(9)}(x,y)=\eta_j^3-\frac{3}{20}\eta_j, \quad 
		\phi_{i,j}^{(10)}(x,y)=\phi_{i,j}^{(3)}(x,y)\phi_{i,j}^{(5)}(x,y),
		\\&
		\phi_{i,j}^{(11)}(x,y)=\xi_i^4-\frac{3}{14}\xi_i^2+\frac{3}{560}, \quad 
		\phi_{i,j}^{(12)}(x,y)=\phi_{i,j}^{(6)}(x,y)\eta_j, \quad 
		\phi_{i,j}^{(13)}(x,y)=\xi_i\phi_{i,j}^{(9)}(x,y),
		\\&
		\phi_{i,j}^{(14)}(x,y)=\eta_j^4-\frac{3}{14}\eta_j^2+\frac{3}{560}, 
		\quad 
		\phi_{i,j}^{(15)}(x,y)=\xi_i^5-\frac{5}{18}\xi_i^3+\frac{5}{336}\xi_i,~
		\\&\phi_{i,j}^{(16)}(x,y)=\phi_{i,j}^{(11)}(x,y)\eta_j,~
		\phi_{i,j}^{(17)}(x,y)=\phi_{i,j}^{(6)}(x,y)\phi_{i,j}^{(5)}(x,y),
		\\ & 
		\phi_{i,j}^{(18)}(x,y)=\phi_{i,j}^{(3)}(x,y)\phi_{i,j}^{(9)}(x,y),~
		\phi_{i,j}^{(19)}(x,y)=\xi_i\phi_{i,j}^{(14)}(x,y),~
		\phi_{i,j}^{(20)}(x,y)=\eta_j^5-\frac{5}{18}\eta_j^3+\frac{5}{336}\eta_j,
		\\&
		\phi_{i,j}^{(21)}(x,y)=\phi_{i,j}^{(11)}(x,y)\phi_{i,j}^{(5)}(x,y),~
		\phi_{i,j}^{(22)}(x,y)=\phi_{i,j}^{(3)}(x,y)\phi_{i,j}^{(14)}(x,y).
	\end{align*}
We emphasize that the auxiliary polynomials constructed below are not used
directly as the final reconstruction polynomials. Instead, the final FAM
reconstructions are obtained by projecting the auxiliary polynomials onto
complete polynomial spaces of the desired total degrees.

For FAM3 and FAM4, we first construct an auxiliary incomplete quartic polynomial
$Q_4(x,y)=\sum_{\ell=0}^{10}c_\ell \phi_{i,j}^{(\ell)}(x,y)$ with the coefficients  
\begin{equation*} 
	\begin{aligned}
		&c_{0}=u^{(0)}_{{5}},
		c_{1}=12u^{(1)}_{{5}},
		c_{2}=12u^{(2)}_{{5}},
		c_{3}=\frac{1}{2}\big(u^{(0)}_4-2u^{(0)}_5+u^{(0)}_6\big),
		\\&
		c_{4}=\frac{1}{4}\big(u^{(0)}_1-u^{(0)}_3-u^{(0)}_7+u^{(0)}_9\big),
		c_{5}=\frac{1}{2}\big(u^{(0)}_2-2u^{(0)}_5+u^{(0)}_8\big), 
		\\&
		c_{6}=\frac{5}{11}\big(-u^{(0)}_4+u^{(0)}_6\big)-\frac{120}{11}u^{(1)}_5,
		c_{7}=\frac{1}{4}\big(-u^{(0)}_1+2u^{(0)}_2-u^{(0)}_3+u^{(0)}_7-2u^{(0)}_8+u^{(0)}_9\big),
		\\&
		c_{8}=\frac{1}{4}\big(-u^{(0)}_1+u^{(0)}_3+2u^{(0)}_4-2u^{(0)}_6-u^{(0)}_7+u^{(0)}_9\big),
		c_{9}=\frac{5}{11}\big(-u^{(0)}_2+u^{(0)}_8\big)-\frac{120}{11}u^{(2)}_5,
		\\&
		c_{10}=\frac{1}{4}\big(
		u^{(0)}_1-2u^{(0)}_2+u^{(0)}_3
		-2u^{(0)}_4+4u^{(0)}_5-2u^{(0)}_6
		+u^{(0)}_7-2u^{(0)}_8+u^{(0)}_9
		\big).
	\end{aligned}
\end{equation*}
The final FAM3 and FAM4 reconstruction polynomials are obtained by projection: $p_3(x,y)=\sum_{\ell=0}^{5}c_\ell \phi_{i,j}^{(\ell)}(x,y)$, $p_4(x,y)=\sum_{\ell=0}^{9}c_\ell \phi_{i,j}^{(\ell)}(x,y).$ 
Thus, $p_3$ is a complete quadratic polynomial and $p_4$ is a complete
cubic polynomial. The coefficient $c_{10}$ is used only in the auxiliary
incomplete quartic polynomial $Q_4$ to close the square moment system.

For FAM5 and FAM6, we construct an auxiliary incomplete degree-six polynomial $	Q_6(x,y)=\sum_{\ell=0}^{22}c_\ell \phi_{i,j}^{(\ell)}(x,y).$
The coefficients of $Q_6(x,y)$ are
\begin{align}
	&c_{0}=u^{(0)}_{{5}},\qquad
	c_{1}=12u^{(1)}_{{5}},\qquad
	c_{2}=12u^{(2)}_{{5}},
	\qquad 
	c_{3}=\frac{73}{56}\big(u^{(0)}_4-2u^{(0)}_5+u^{(0)}_6\big)
	+\frac{135}{28}\big(u^{(1)}_4-u^{(1)}_6\big),
	\nonumber\\&
	c_{4}=\frac{41}{76}\big(u^{(0)}_1-u^{(0)}_3-u^{(0)}_7+u^{(0)}_9\big)
	+\frac{33}{19}\big(
	u^{(1)}_1+u^{(1)}_3-u^{(1)}_7-u^{(1)}_9
	+u^{(2)}_1-u^{(2)}_3+u^{(2)}_7-u^{(2)}_9
	\big),
	\nonumber\\&
	c_{5}=\frac{73}{56}\big(u^{(0)}_2-2u^{(0)}_5+u^{(0)}_8\big)
	+\frac{135}{28}\big(u^{(2)}_2-u^{(2)}_8\big),
	\qquad 
	c_{6}=\frac{595}{324}\big(-u^{(0)}_4+u^{(0)}_6\big)
	-\frac{985}{162}\big(u^{(1)}_4+u^{(1)}_6\big)
	-\frac{2585}{81}u^{(1)}_5,
	\nonumber\\&
	c_{7}=\frac{1695}{2128}\big(-u^{(0)}_1+2u^{(0)}_2-u^{(0)}_3
	+u^{(0)}_7-2u^{(0)}_8+u^{(0)}_9\big)
	\nonumber\\&\quad
	+\frac{135}{56}\big(-u^{(1)}_1+u^{(1)}_3+u^{(1)}_7-u^{(1)}_9\big)
	+\frac{33}{19}\big(-u^{(2)}_1+2u^{(2)}_2-u^{(2)}_3
	-u^{(2)}_7+2u^{(2)}_8-u^{(2)}_9\big),
	\nonumber\\&
	c_{8}=\frac{1695}{2128}\big(-u^{(0)}_1+u^{(0)}_3
	+2u^{(0)}_4-2u^{(0)}_6-u^{(0)}_7+u^{(0)}_9\big)
	\nonumber\\&\quad
	+\frac{33}{19}\big(-u^{(1)}_1-u^{(1)}_3+2u^{(1)}_4+2u^{(1)}_6
	-u^{(1)}_7-u^{(1)}_9\big)
	+\frac{135}{56}\big(-u^{(2)}_1+u^{(2)}_3+u^{(2)}_7-u^{(2)}_9\big),
	\nonumber\\&
	c_{9}=\frac{595}{324}\big(-u^{(0)}_2+u^{(0)}_8\big)
	-\frac{985}{162}\big(u^{(2)}_2+u^{(2)}_8\big)
	-\frac{2585}{81}u^{(2)}_5,
	\nonumber\\&
	c_{10}=\frac{59}{56}\big(u^{(0)}_1-2u^{(0)}_2+u^{(0)}_3
	-2u^{(0)}_4+4u^{(0)}_5-2u^{(0)}_6
	+u^{(0)}_7-2u^{(0)}_8+u^{(0)}_9\big)
	\nonumber\\&\quad
	+\frac{135}{56}\big(
	u^{(1)}_1-u^{(1)}_3-2u^{(1)}_4+2u^{(1)}_6+u^{(1)}_7-u^{(1)}_9
	+u^{(2)}_1-2u^{(2)}_2+u^{(2)}_3-u^{(2)}_7+2u^{(2)}_8-u^{(2)}_9
	\big),
	\nonumber\\&
	c_{11}=\frac{5}{8}\big(-u^{(0)}_4+2u^{(0)}_5-u^{(0)}_6\big)
	+\frac{15}{4}\big(-u^{(1)}_4+u^{(1)}_6\big),
	\nonumber\\&
	c_{12}=\frac{5}{38}\big(-u^{(0)}_1+u^{(0)}_3+u^{(0)}_7-u^{(0)}_9\big)
	+\frac{30}{19}\big(-u^{(1)}_1-u^{(1)}_3+u^{(1)}_7+u^{(1)}_9\big),
	\nonumber\\&
	c_{13}=\frac{5}{38}\big(-u^{(0)}_1+u^{(0)}_3+u^{(0)}_7-u^{(0)}_9\big)
	+\frac{30}{19}\big(-u^{(2)}_1+u^{(2)}_3-u^{(2)}_7+u^{(2)}_9\big),
	\nonumber\\&
	c_{14}=\frac{5}{8}\big(-u^{(0)}_2+2u^{(0)}_5-u^{(0)}_8\big)
	+\frac{15}{4}\big(-u^{(2)}_2+u^{(2)}_8\big),
\qquad
	c_{15}=\frac{35}{36}\big(u^{(0)}_4-u^{(0)}_6\big)
	+\frac{77}{18}\big(u^{(1)}_4+u^{(1)}_6\big)
	+\frac{133}{9}u^{(1)}_5,
	\nonumber\\&
	c_{16}=\frac{5}{16}\big(u^{(0)}_1-2u^{(0)}_2+u^{(0)}_3
	-u^{(0)}_7+2u^{(0)}_8-u^{(0)}_9\big)
	+\frac{15}{8}\big(u^{(1)}_1-u^{(1)}_3-u^{(1)}_7+u^{(1)}_9\big),
	\nonumber\\&
	c_{17}=\frac{5}{38}\big(u^{(0)}_1-u^{(0)}_3-2u^{(0)}_4
	+2u^{(0)}_6+u^{(0)}_7-u^{(0)}_9\big)
	+\frac{30}{19}\big(u^{(1)}_1+u^{(1)}_3-2u^{(1)}_4
	-2u^{(1)}_6+u^{(1)}_7+u^{(1)}_9\big),
	\nonumber\\&
	c_{18}=\frac{5}{38}\big(u^{(0)}_1-2u^{(0)}_2+u^{(0)}_3
	-u^{(0)}_7+2u^{(0)}_8-u^{(0)}_9\big)
	+\frac{30}{19}\big(u^{(2)}_1-2u^{(2)}_2+u^{(2)}_3
	+u^{(2)}_7-2u^{(2)}_8+u^{(2)}_9\big),
	\nonumber\\&
	c_{19}=\frac{5}{16}\big(u^{(0)}_1-u^{(0)}_3-2u^{(0)}_4
	+2u^{(0)}_6+u^{(0)}_7-u^{(0)}_9\big)
	+\frac{15}{8}\big(u^{(2)}_1-u^{(2)}_3-u^{(2)}_7+u^{(2)}_9\big),
	\nonumber\\&
	c_{20}=\frac{35}{36}\big(u^{(0)}_2-u^{(0)}_8\big)
	+\frac{77}{18}\big(u^{(2)}_2+u^{(2)}_8\big)
	+\frac{133}{9}u^{(2)}_5,
	\nonumber\\&
	c_{21}=\frac{5}{16}\big(-u^{(0)}_1+2u^{(0)}_2-u^{(0)}_3
	+2u^{(0)}_4-4u^{(0)}_5+2u^{(0)}_6
	-u^{(0)}_7+2u^{(0)}_8-u^{(0)}_9\big)
	\nonumber\\&\quad
	+\frac{15}{8}\big(-u^{(1)}_1+u^{(1)}_3+2u^{(1)}_4
	-2u^{(1)}_6-u^{(1)}_7+u^{(1)}_9\big),
	\nonumber\\&
	c_{22}=\frac{5}{16}\big(-u^{(0)}_1+2u^{(0)}_2-u^{(0)}_3
	+2u^{(0)}_4-4u^{(0)}_5+2u^{(0)}_6
	-u^{(0)}_7+2u^{(0)}_8-u^{(0)}_9\big)
	\nonumber\\&\quad
	+\frac{15}{8}\big(-u^{(2)}_1+2u^{(2)}_2-u^{(2)}_3
	+u^{(2)}_7-2u^{(2)}_8+u^{(2)}_9\big).\nonumber
\end{align}

The final FAM5 and FAM6 reconstruction polynomials are obtained by projection: $p_5(x,y)=\sum_{\ell=0}^{14}c_\ell \phi_{i,j}^{(\ell)}(x,y),p_5(x,y)=\sum_{\ell=0}^{20}c_\ell \phi_{i,j}^{(\ell)}(x,y).$
Therefore, $p_5$ is a complete quartic polynomial and $p_6$ is a complete
quintic polynomial. For FAM5, the coefficients $c_{15},\ldots,c_{22}$ are
discarded after projection. For FAM6, only the auxiliary degree-six coefficients
$c_{21}$ and $c_{22}$ are discarded after projection. These higher-degree
coefficients are introduced only to close the square moment system for the
auxiliary polynomial $Q_6$.

\section{Two-dimensional jump notation and constant matrices}\label{app2}
For completeness, we repeat the notation behind the two-dimensional OE coefficients. The jumps across the vertical and horizontal interfaces are $\jump{u_h^*}_{i+\frac12,j}=u_h^*(x_{i+\frac12}^+,y_j)-u_h^*(x_{i+\frac12}^-,y_j),
\jump{u_h^*}_{i,j+\frac12}=u_h^*(x_i,y_{j+\frac12}^+)-u_h^*(x_i,y_{j+\frac12}^-).$
For $k=3,4,5,6$, these jumps can be written compactly as
\[
\left\{
\begin{aligned}
\jump{u_h^*}_{i+\frac12,j}&=
\left\langle \bfA_k,{\bf U}^{n,(0)}_x\right\rangle+
\left\langle \bfB_k,{\bf U}^{n,(1)}_x\right\rangle+
\left\langle \bfC_k,{\bf U}^{n,(2)}_x\right\rangle,\\
\jump{u_h^*}_{i,j+\frac12}&=
\left\langle \bfA_k,({\bf U}^{n,(0)}_y)^\top\right\rangle+
\left\langle \bfC_k,({\bf U}^{n,(1)}_y)^\top\right\rangle+
\left\langle \bfB_k,({\bf U}^{n,(2)}_y)^\top\right\rangle,
\end{aligned}
\right.
\qquad k=3,4,5,6,
\]
where $\langle\cdot,\cdot\rangle$ is the Frobenius inner product, and the directional stencil-value matrices are
${\bf U}^{n,(r)}_x=[u_{i+s,j+\ell}^{n,(r)}]\in\mathbb R^{4\times3}, {\bf U}^{n,(r)}_y=[u_{i+\ell,j+s}^{n,(r)}]\in\mathbb R^{3\times4}, r=0,1,2,$
with $\ell=-1,0,1$ and $s=-1,0,1,2$. Tables~\ref{tab:supp:ABC3456} list the constant matrices $\bfA_k$, $\bfB_k$, and $\bfC_k$.

\begin{table}[H]
\centering
\belowrulesep=0pt
\aboverulesep=0pt
\caption{Constant matrices $\bfA_k$, $\bfB_k$, and $\bfC_k$ in the two-dimensional jump representation for $k=3,4,5,6$.}
\label{tab:supp:ABC3456}
\scriptsize
\setlength{\tabcolsep}{1.5pt}
\renewcommand{\arraystretch}{1.35}
\resizebox{\textwidth}{!}{
\begin{tabular}{c|c|c|c}
\toprule[1.2pt]
$k$ & $\bfA_k$ & $\bfB_k$ & $\bfC_k$ \\
\midrule
$3$ &
$\begin{bmatrix}
0&-\frac{1}{12}&0\\[2pt]
\frac{1}{24}&-\frac{5}{6}&\frac{1}{24}\\[2pt]
-\frac{1}{24}&\frac{5}{6}&-\frac{1}{24}\\[2pt]
0&\frac{1}{12}&0
\end{bmatrix}$
&
$\begin{bmatrix}
0&0&0\\[2pt]
0&-6&0\\[2pt]
0&-6&0\\[2pt]
0&0&0
\end{bmatrix}$
&
$\begin{bmatrix}
0&0&0\\[2pt]
0&0&0\\[2pt]
0&0&0\\[2pt]
0&0&0
\end{bmatrix}$
\\ \midrule
$4$ &
$\begin{bmatrix}
-{\frac{1}{96}}&-{\frac{7}{176}}&-{\frac{1}{96}}\\[2pt]
\frac{1}{32}&-{\frac{139}{176}}&\frac{1}{32}\\[2pt]
-\frac{1}{32}&{\frac{139}{176}}&-\frac{1}{32}\\[2pt]
{\frac{1}{96}}&{\frac{7}{176}}&{\frac{1}{96}} 
\end{bmatrix}$
&
$\begin{bmatrix}
0&0&0\\[2pt]
0&-\frac{60}{11}&0\\[2pt]
0&-\frac{60}{11}&0\\[2pt]
0&0&0
\end{bmatrix}$
&
$\begin{bmatrix}
0&0&0\\[2pt]
0&0&0\\[2pt]
0&0&0\\[2pt]
0&0&0
\end{bmatrix}$
\\\midrule
$5$ &
$\begin{bmatrix}
	-{\frac{2843}{153216}}&-{\frac{54745}{689472}}&-{\frac{2843}{153216}}\\[2pt]
	{\frac{297}{8512}}&-{\frac{121679}{344736}}&{\frac{297}{8512}}  \\[2pt]
	-{\frac{297}{8512}}&{\frac{121679}{344736}}&-{\frac{297}{8512}}\\[2pt]
	{\frac{2843}{153216}}&{\frac{54745}{689472}}&{\frac{2843}{153216}} 
\end{bmatrix}$
&
$\begin{bmatrix}
-{\frac{331}{8512}}&-{\frac{126937}{344736}}&-{\frac{331}{8512}}\\[2pt]
-{\frac{901}{8512}}&-{\frac	{1081991}{344736}}&-{\frac{901}{8512}}\\[2pt]
-{\frac{901}{8512}}&-{\frac{1081991}{344736}}&-{\frac{901}{8512}}\\[2pt]
-{\frac{331}{8512}}&-{\frac{126937}{344736}}&-{\frac{331}{8512}}
\end{bmatrix}$
&
$\begin{bmatrix}
-{\frac{15}{224}}&0&{\frac{15}{224}}\\[2pt]
{\frac{99}{448}}&0&-{\frac{99}{448}}\\[2pt]
-{\frac{99}{448}}&0&{\frac{99}{448}}\\[2pt]
{\frac{15}{224}}&0&-{\frac{15}{224}}
\end{bmatrix}$
\\ \midrule
$6$ &
$\begin{bmatrix}
-{\frac{829}{43776}}&-{\frac{5417}{65664}}&-{\frac{829}{43776}}\\[2pt]
{\frac{509}{14592}}&-{\frac{23429}{65664}}&{\frac{509}{14592}}\\[2pt]
-{\frac{509}{14592}}&{\frac{23429}{65664}}&-{\frac{509}{14592}}\\[2pt]
{\frac{829}{43776}}&{\frac{5417}{65664}}&{\frac{829}{43776}} 
\end{bmatrix}$
&
$\begin{bmatrix}
-{\frac{21}{608}}&-{\frac{3233}{8208}}&-{\frac{21}{608}}\\[2pt]
-{\frac{59}{608}}&-{\frac{26527}{8208}}&-{\frac{59}{608}}\\[2pt]
-{\frac{59}{608}}&-{\frac{26527}{8208}}&-{\frac{59}{608}}\\[2pt]
-{\frac{21}{608}}&-{\frac{3233}{8208}}&-{\frac{21}{608}} 
\end{bmatrix}$
&
$\begin{bmatrix}
-{\frac{9}{128}}&0&{\frac{9}{128}}\\[2pt]
{\frac{27}{128}}&0&-{\frac{27}{128}}\\[2pt]
-{\frac{27}{128}}&0&{\frac{27}{128}}\\[2pt]
{\frac{9}{128}}&0&-{\frac{9}{128}} 
\end{bmatrix}$
\\
\bottomrule[1.2pt]
\end{tabular}}
\end{table}

\end{appendix}

\bibliography{reference}
\bibliographystyle{siamplain}

\end{document}